\documentclass{elsarticle}
%%%%%%% PAQUETS
\usepackage[english]{babel}
\usepackage[utf8]{inputenc}
\usepackage[T1]{fontenc}
\usepackage[usenames, dvipsnames]{xcolor}
\usepackage{tikz}
\usepackage{amsthm}
\usepackage{amsmath}
\usepackage{latexsym}
\usepackage{float}
\usepackage{stmaryrd}
\usepackage{fullpage}
\usepackage{hyperref}
\usepackage{enumerate}

%%%%%%% STYLE
\everymath{\displaystyle}

%%%%%%% THÉORÈMES
\newtheorem{theorem}{Theorem}
\newtheorem{obs}[theorem]{Observation}
\newtheorem{lemma}[theorem]{Lemma}

\newtheorem{defi}[theorem]{Definition}
\newtheorem{prop}[theorem]{Proposition}
\newtheorem{conj}[theorem]{Conjecture}

\newtheorem{rem}[theorem]{Remark}

%%%%%%% COMMANDES
\newcommand{\outcomeP}{\mathcal{P}}
\newcommand{\outcomeN}{\mathcal{N}}
\newcommand{\grundy}{\mathcal{G}}
\newcommand{\nimsum}{\oplus}
\newcommand{\opt}{\mathrm{opt}}
\DeclareMathOperator{\mex}{mex}

\newcommand{\conestar}{\mathcal C_1^*}

\newcommand{\ctwostar}{\mathcal C_2^*}
\newcommand{\ctwobox}{\mathcal C_2^{\Box}}
\newcommand{\cthreebox}{\mathcal C_3^{\Box}}
\newcommand{\dzerostar}{\mathcal D_0^*}
\newcommand{\donestar}{\mathcal D_1^*}
\newcommand{\donebox}{\mathcal D_1^{\Box}}
\newcommand{\dtwostar}{\mathcal D_2^*}
\newcommand{\dtwobox}{\mathcal D_2^{\Box}}
\newcommand{\dthreebox}{\mathcal D_3^{\Box}}

\newcommand{\bipa}{\tikz[baseline=-4]{\draw (0,0) node[circle,fill=black,minimum size=0,inner sep=1]{} -- (0.3,0) node[circle,fill=black, minimum size=0, inner sep=1] {};}}
\newcommand{\joint}[1]{\tikz[baseline=-4]{\draw (0,0) node[circle,fill=black,minimum size=0,inner sep=1]{} -- (0.3,0) node[circle,fill=black, minimum size=0, inner sep=1] {} node[midway,above,scale=0.5] {$#1$};}}
\newcommand{\tripa}{\joint{2}}

\newcommand{\sstar}[1]{$S_{#1}$}
\newcommand{\sbstar}[3]{#1 \joint{#2} #3}

 % À refaire en plus joli ?
 % À changer ?
 % À changer ?

%%%%%%% TIKZ
\tikzstyle{noeud}=[circle, fill=black, inner sep= 0, minimum size = 4]
\tikzstyle{gros_noeud}=[circle, fill=black, inner sep= 0, minimum size = 8]

%%%%%%% HEADER
\title{Octal Games on Graphs: \\ The game 0.33 on subdivided stars and bistars \tnoteref{gag}}
\tnotetext[gag]{This work has been supported by the ANR-14-CE25-0006 project of the French National Research Agency.}
\date{}
\author[limos]{Laurent Beaudou}
\author[laasp]{Pierre Coupechoux}
\author[liris]{Antoine Dailly\corref{cor}}
\author[ijf]{Sylvain Gravier}
\author[laasj,mam]{Julien Moncel}
\author[liris]{Aline Parreau}
%\author[labri,cnrsb]{Éric Sopena}
\author[labri]{Éric Sopena}

\cortext[cor]{Corresponding author}
\address[limos]{LIMOS, 1 rue de la Chebarde, 63178 Aubière CEDEX, France.}
\address[laasj]{LAAS-CNRS, Université de Toulouse, CNRS, Université Toulouse 1 Capitole - IUT Rodez, Toulouse, France}
\address[mam]{Fédération de Recherche Maths à Modeler, Institut Fourier, 100 rue des Maths, BP 74, 38402 Saint-Martin d'Hères Cedex, France}
\address[laasp]{LAAS-CNRS, Université de Toulouse, CNRS, INSA, Toulouse, France.}
\address[liris]{Univ Lyon, Université Lyon 1, LIRIS UMR CNRS 5205, F-69621, Lyon, France.}
\address[ijf]{CNRS/Université Grenoble-Alpes, Institut Fourier/SFR Maths à Modeler, 100 rue des Maths - BP 74, 38402 Saint Martin d'Hères, France.}
\address[labri]{Univ. Bordeaux, Bordeaux INP, CNRS, LaBRI, UMR5800, F-33400 Talence, France}
%\address[cnrsb]{CNRS, LaBRI, UMR5800, F-33400 Talence, France.}

%%%%%%% DOCUMENT
\begin{document}

\begin{frontmatter}

%%%%%%% ABSTRACT
\begin{abstract}
\emph{Octal games} are a well-defined family of two-player games played on heaps of counters, in which the players remove alternately a certain number of counters from a heap, sometimes being allowed to split a heap into two nonempty heaps, until no counter can be removed anymore.

We extend the definition of octal games to play them on graphs: heaps are replaced by connected components and counters by vertices. Thus, playing an octal game on a path $P_n$ is equivalent to playing the same octal game on a heap of $n$ counters.

We study one of the simplest octal games, called 0.33, in which the players can remove one vertex or two adjacent vertices without disconnecting the graph. We study this game on trees and give a complete resolution of this game on subdivided stars and bistars.
\end{abstract}

\begin{keyword}
	Combinatorial Games;
	Octal Games;
	Subtraction Games;
	Graphs
\end{keyword}

\end{frontmatter}

%%%%%%% INTRO
\section{Introduction}

\emph{Combinatorial games} are finite two-player games without chance, with perfect information and such that the last move alone determines which player wins the game.
%\emph{Impartial games} are combinatorial games where the rules are the same for both players.
Since the information is perfect and the game finite, there is always a winning strategy for one of the players. A formal definition of combinatorial games and basic results will be given in Section \ref{sec:def}. For more details, the interested reader can refer to \cite{winningways}, \cite{lip} or \cite{cgt}.

A well-known family of combinatorial games is the family of \emph{subtraction games}, which are played on a heap of counters. A subtraction game is defined by a list of positive integers $L$ and is denoted by $Sub(L)$. A player is allowed to remove $k$ counters from the heap if and only if $k \in L$. The first player unable to play loses the game. For example, consider the game $Sub(\{1,2\})$. In this game, both players take turns removing one or two counters from the heap, until the heap is empty. If the initial number of counters is a multiple of 3, then the second player has a winning strategy: by playing in such a way that the first player always gets a multiple of 3, he will take the last counter and win the game.

A natural generalization of subtraction games is to allow the players to split a heap into two nonempty heaps after having removed counters. This defines a much larger class of games, called \emph{octal games} \cite{winningways}. An octal game is represented by an octal code which entirely defines its rules. As an example, $Sub(\{1,2\})$ is defined as {\bf 0.33}. A precise definition will be given in Section \ref{sec:def}.
Octal games have been extensively studied. One of the most important questions \cite{Guy96} is the periodicity of these games. Indeed, it seems that all finite octal games have a periodic behaviour in the following sense: the set of initial numbers of counters for which the first player has a winning strategy is ultimately periodic. This is true for all subtraction games and for all finite octal games for which the study has been completed \cite{althofer,winningways}.

%Similarly to finite subtraction games, it seems that all finite octal games have a periodic behaviour \cite{winningways}. This problem is mentionned by Guy as one of the most important questions of combinatorial game theory \cite{Guy96}.

Octal games can also be played by placing counters in a row. Heaps are constituted by consecutive counters and only consecutive counters can be removed. According to this representation, it seems natural to play octal games on more complex structures like graphs. A position of the game is a graph and players remove vertices that induce a connected component which corresponds to consecutive counters.
The idea to extend the notion of octal games to graphs was already suggested in \cite{fleischer}. However, to our knowledge, this idea has not been further developed.
With our definition, playing the generalization of an octal game on a path is the same as playing  the original octal game.
In the special case of subtraction games, players have to keep the graph connected. As an example, playing {\bf 0.33} on a graph consists in removing one vertex or two adjacent vertices from the graph without disconnecting it.

This extension of octal games is in line with several take-away games on graphs such as {\sc Arc Kayles} \cite{S78} and {\sc Grim} \cite{adams}. However, it does not describe some other deletion games, such as the vertex and edge versions of the game \textsc{geography} \cite{S78,edgegeo}, vertex and edge deletion games with parity rules, considered in \cite{ottaway1} and \cite{ottaway2}, or scoring deletion games such as Le Pic ar\^ete \cite{picarete}. %Unfortunately, {\sc Node Kayles} \cite{S78} cannot be considered as an octal game on graphs. Indeed, in {\sc Node Kayles}, players remove a vertex with all its adjacent vertices. Even in on a path, this is equivalent to an octal game, it turns to be false in a general graphs since an arbitrary number of vertices can be removed in one move.

%The game {\sc Arc Kayles}, proposed by Schaeffer \cite{S78}, consists in taking one by one edges of the graph. This is exactly the octal game {\bf 0.07} played on graphs according to our framework. Although the complexity of {\sc Arc Kayles} is still open \cite{LM,S78}, it has been studied and partially solved on subdivided stars, cycles and wheels where some periodic behaviour is conjectured \cite{H15}. More recently, a game called {\sc Grim} \cite{adams} has been introduced. In this game, one can remove any vertex of the graph (and its adjacent edge) except if it is isolated. As noticed by the authors of \cite{adams}, {\sc Grim} is equivalent to {\bf 0.6} on paths. It is consistent with our definition and corresponds to {\bf 0.6} on general graphs. A scoring version of {\bf 0.6} on graphs has also been considered \cite{DGM}.

We will first give in Section \ref{sec:def} basic definitions from combinatorial game theory as well as a formal definition of octal games on graphs. We then focus on the game {\bf 0.33} which is one of the simplest octal games, and to its study on trees. 
We first study subdivided stars in Section \ref{sec:star}.
We prove that paths can be reduced modulo 3 which leads to a complete resolution, in contrast with the related studies on subdivided stars of {\sc Node Kayles} \cite{fleischer} and {\sc Arc Kayles} \cite{H15}. In Section \ref{sec:bistar}, we extend our results to subdivided bistars (i.e. trees with at most two vertices of degree at least 3) using a game operator similar to the sum of games. Unfortunately, these results cannot be extended to all trees and not even to caterpillars. In a forthcoming paper \cite{futurpapier}, some of our results are generalized to other subtraction games on subdivided stars.

%%%%%%% LES DÉFINITIONS FORMELLES
\section{Definitions}\label{sec:def}

\subsection{Basics of Combinatorial Game Theory}
\emph{Combinatorial games} \cite{winningways} are two-player games such that:
\begin{enumerate}
	\item The two players play alternately.
	\item There is no chance.
	\item The game is finite (there are finitely many positions and no position can be encountered twice during the game).
	\item The information is perfect.
	\item The last move alone determines the winner.
\end{enumerate}

In \emph{normal} play, the player who plays the last move wins the game. In \emph{misère} play, the player who plays the last move loses the game. \emph{Impartial games} are combinatorial games where at each turn the moves are the same for both players. Hence the only distinction between the players is who plays the first move. In this paper, we will only consider impartial games in normal play.

Positions in impartial games have exactly two possible {\em outcomes}: either the first player has a winning strategy, or the second player has a winning strategy. If a game position falls into the first category, it is an \emph{$\outcomeN$-position} (for $\outcomeN$ext player wins); otherwise, it is a \emph{$\outcomeP$-position} (for $\outcomeP$revious player wins).

From a given position $J$ of the game, the different positions that can be reached by playing a move from $J$ are the \emph{options} of $J$, and the set of options of $J$ is denoted $\opt(J)$. If we know the outcomes of the positions in $\opt(J)$ we can deduce the outcome of $J$, using the following proposition:

\begin{prop}\label{prop:outcome} Let $J$ be a position of an impartial combinatorial game in normal play:
  \begin{itemize}
  \item If $\opt(J)=\emptyset$, then $J$ is a $\outcomeP$-position.
  \item If there exists a $\outcomeP$-position $J'$ in $\opt(J)$, then $J$  is an $\outcomeN$-position: a winning move consists in playing from $J$ to $J'$.
  \item If all the options of $J$ are $\outcomeN$-positions, then $J$ is a $\outcomeP$-position.
\end{itemize}
\end{prop}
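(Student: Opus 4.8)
The plan is to prove the three cases simultaneously by induction on a well-founded measure attached to the position, whose existence is exactly what the finiteness of the game (condition~3 above) provides. Concretely, for any position $K$ I would let $\ell(K)$ be the length of a longest sequence of moves playable from $K$; since there are finitely many positions and none can recur during a play, $\ell(K)$ is a well-defined nonnegative integer, and $\ell(K') < \ell(K)$ for every $K' \in \opt(K)$. I would then induct on $\ell(J)$, the induction hypothesis being that every position $K$ with $\ell(K) < \ell(J)$ satisfies the statement; in particular every option of $J$ is already known to be an $\outcomeN$- or a $\outcomeP$-position.

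For the base case, $\opt(J) = \emptyset$ forces $\ell(J) = 0$: the first player has no move and therefore loses under the normal play convention, so $J$ is a $\outcomeP$-position, which is the first bullet. For the inductive step, if some $J' \in \opt(J)$ is a $\outcomeP$-position, then the first player moves from $J$ to $J'$; in $J'$ it is the opponent who must play, and a $\outcomeP$-position is losing for the player to move, so the first player wins from $J$ and $J$ is an $\outcomeN$-position. If instead every $J' \in \opt(J)$ is an $\outcomeN$-position, then whatever the first player plays reaches some $J'$ in which the second player is now to move and has a winning strategy, so the second player wins from $J$ and $J$ is a $\outcomeP$-position. By the induction hypothesis every option of $J$ is an $\outcomeN$- or a $\outcomeP$-position, so these three cases are exhaustive, which at the same time re-confirms that $J$ itself has a well-defined outcome.

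There is essentially no obstacle in this argument; the only two points requiring care are, first, the bookkeeping of which player counts as ``next'' and which as ``previous'' immediately after a move — the roles swap, so a $\outcomeP$-option is precisely a move leaving the opponent in a losing position, and this is what turns the definitions of $\outcomeN$- and $\outcomeP$-positions into the stated recursion — and second, the verification that $\ell$ is a legitimate induction measure, which is exactly where the two finiteness hypotheses (finitely many positions, no repetition) are used. Everything beyond that is a direct unwinding of the definitions.
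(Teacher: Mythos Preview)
Your argument is correct and is the standard induction on the height (longest play) of a position; there is nothing to add. Note that the paper does not actually prove this proposition: it is stated as a basic, well-known fact of combinatorial game theory and left without proof, so there is no ``paper's own proof'' to compare against.
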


Every position $J$ of a combinatorial game can be viewed as a combinatorial game with $J$ as the initial position. We therefore often consider positions as games.
Some games can be described as the union of smaller game positions. In order to study them, we define the concept of the sum of games.
Given two games $J_1$ and $J_2$, their \emph{disjoint sum}, denoted by $J_1+J_2$, is defined as the game where, at their turn, each player plays a legal move on either $J_1$ or $J_2$.  Once $J_1$ (resp. $J_2$) is finished, the two players play exclusively on $J_2$ (resp. $J_1$), until it is over. The player who plays the last move wins the game.

The question is now whether we can determine the outcome of a disjoint sum $J_1+J_2$ as a function of the outcomes of $J_1$ and $J_2$. If $J_1$ is a $\outcomeP$-position, then $J_1+J_2$ has the same outcome as $J_2$: the winning player of $J_2$ applies his strategy on $J_2$, and if the other player plays on $J_1$ then he applies the winning strategy on $J_1$. However, the disjoint sum of two $\outcomeN$-positions cannot be determined so easily.
In order to study the disjoint sum of two $\outcomeN$-positions, the \emph{equivalence} of two games $J_1$ and $J_2$ is defined as follows: $J_1 \equiv J_2$ if and only if $J_1+J_2$ is a $\outcomeP$-position. According to this relation, one can attribute to a game a value corresponding to its equivalence class, called the {\em Grundy value}. The Grundy value of a game position $P$ for a game $J$, denoted by $\grundy_J(P)$, can be computed from the Grundy value of its options using the following formula:

$$\grundy_J(P) = \mex(\grundy_J(P') | P' \in \opt(P))$$
where, for any set of integers $S$, $\mex(S)$ is the smallest nonnegative integer not in $S$. In particular, $P$ is a $\outcomeP$-position if and only if $\grundy_J(P)=0$. Note that this is consistent with Proposition~\ref{prop:outcome}.
When the context is clear, we will denote $\grundy_J(P)$ as $\grundy(P)$.

A fundamental result of Combinatorial Game Theory is the Sprague-Grundy Theorem that gives the Grundy values of the sum of games:

\begin{theorem}[Sprague-Grundy Theorem \cite{Spra36}]\label{thm:grundysum}
 Let $J_1$ and $J_2$ be two game positions. Then $\grundy(J_1+J_2)=\grundy(J_1)\oplus\grundy(J_2)$, where $\oplus$, called the nim-sum, is the bitwise XOR applied to the two values written in base~2.
\end{theorem}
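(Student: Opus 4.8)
The plan is to prove the theorem by structural induction on the game $J_1+J_2$ (equivalently, on the sum of the ranks of $J_1$ and $J_2$, where the rank of a position is the length of a longest sequence of moves from it; this is well-founded because the games are finite), reducing the whole statement to a single computation of a mex. Write $g=\grundy(J_1)\nimsum\grundy(J_2)$; the goal is to show $\grundy(J_1+J_2)=g$, which by the recursive definition of the Grundy value amounts to checking two things: (i) no option of $J_1+J_2$ has Grundy value $g$, and (ii) for every nonnegative integer $h<g$ some option of $J_1+J_2$ has Grundy value $h$. The key structural fact is that every option of $J_1+J_2$ is of the form $J_1'+J_2$ with $J_1'\in\opt(J_1)$ or of the form $J_1+J_2'$ with $J_2'\in\opt(J_2)$, so in either case the induction hypothesis applies and yields, e.g., $\grundy(J_1'+J_2)=\grundy(J_1')\nimsum\grundy(J_2)$.

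For (i): if $J_1'\in\opt(J_1)$ then $\grundy(J_1')\neq\grundy(J_1)$, since $\grundy(J_1)$ is the mex of the Grundy values of the options of $J_1$ and is therefore not among them; because XOR by a fixed value is injective, $\grundy(J_1'+J_2)=\grundy(J_1')\nimsum\grundy(J_2)\neq\grundy(J_1)\nimsum\grundy(J_2)=g$, and the symmetric argument handles options $J_1+J_2'$. For (ii): given $h<g$, put $d=g\nimsum h$ and let $k$ be the position of the leading bit of $d$; since $h<g$ and they first differ at bit $k$, the bit of $g$ in position $k$ is $1$. As $g=\grundy(J_1)\nimsum\grundy(J_2)$, exactly one of the two summands has a $1$ in position $k$; after possibly swapping $J_1$ and $J_2$, say it is $\grundy(J_1)$. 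Then $\grundy(J_1)\nimsum d<\grundy(J_1)$, because XORing with $d$ clears bit $k$ and changes only lower-order bits. By the mex characterisation of $\grundy(J_1)$ there is an option $J_1'\in\opt(J_1)$ with $\grundy(J_1')=\grundy(J_1)\nimsum d$, and the induction hypothesis gives $\grundy(J_1'+J_2)=\grundy(J_1')\nimsum\grundy(J_2)=\bigl(\grundy(J_1)\nimsum\grundy(J_2)\bigr)\nimsum d=g\nimsum d=h$, as required.

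Together, (i) and (ii) say precisely that $\grundy(J_1+J_2)=\mex\{\grundy(J')\mid J'\in\opt(J_1+J_2)\}=g$, completing the induction; the base case, where neither $J_1$ nor $J_2$ has an option, is immediate since then both have Grundy value $0$ and $J_1+J_2$ has none, so $\grundy(J_1+J_2)=0=0\nimsum0$. The only genuinely delicate step is the binary-arithmetic argument in part (ii): one must argue carefully that the leading bit of $d=g\nimsum h$ witnesses a $1$ in $g$, that this $1$ can be located in one of the two summands, and that XORing that summand by $d$ strictly decreases it — this is exactly where the nim-sum, as opposed to some other way of combining the two values, is forced. Everything else is bookkeeping with the recursive definition of $\grundy$ and with the shape of options in a disjoint sum. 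Finally, combining this with the earlier observation that $P$ is a $\outcomeP$-position iff $\grundy(P)=0$ recovers, as a special case, the claim that $J_1+J_2$ is a $\outcomeP$-position exactly when $\grundy(J_1)=\grundy(J_2)$.
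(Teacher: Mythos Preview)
Your proof is correct and is the standard textbook argument for the Sprague--Grundy Theorem. Note, however, that the paper does not actually prove this statement: it is quoted as a classical result with a citation to \cite{Spra36}, so there is no ``paper's own proof'' to compare against. Your write-up could therefore stand on its own as a self-contained proof that the paper chose to omit.
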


A direct application of this theorem is that for any game position $J$, we have $\grundy(J+J)=0$. Moreover, we can see that two games $J_1$ and $J_2$ have the same Grundy value if and only if their disjoint sum $J_1+J_2$ is a $\outcomeP$-position.
 
\subsection{Octal games}
A well-known family of impartial games is the family of \emph{octal games}, which are played on heaps of counters. On their turn, each player removes some counters from one heap and may also divide the remaining counters of the heap into two nonempty heaps. The rules of an octal game are encoded according to an octal number as follows:

\begin{defi}[Octal games \cite{winningways}]
	\label{def:octalGames}
{\rm	Let $u_1,u_2,\ldots,u_n,\ldots$ be nonnegative integers such that for all $i$, $u_i \leq 7$.
	In the octal game ${\bf 0.u_1u_2...u_n...}$, a player can remove $i$ counters from a heap if and only if $u_i \neq 0$.
	
	Moreover, if we write $u_i$ as $u_i = b^i_1 + 2 \cdot b^i_2 + 4 \cdot b^i_3$ with $b^i_j \in \{0,1\}$, then, the player can, when removing $i$ counters from a heap:
	\begin{enumerate}
		\item empty the heap if and only if $b^i_1=1$;
		\item leave the heap nonempty if and only if $b^i_2=1$;
		\item split the remaining heap in two nonempty heaps if and only if $b^i_3=1$.
	\end{enumerate}}
\end{defi}

An octal game is {\em finite} if it has a finite number of non-zero values. In this case, we stop the code at the last non-zero $u_i$.

 For example, ${\bf u_i}=3$ means that a player can remove $i$ counters from a heap without splitting it. Octal games with only ${\bf 0}$ and ${\bf 3}$ in their code correspond to {\em subtraction games} since the heap is never divided. In particular, the game {\bf 0.33} is the game where one can remove one or two counters from a heap. A value of ${\bf u_i=7}$ means that one can remove $i$ counters from a heap, possibly dividing the heap in two heaps whereas ${\bf u_i=6}$ means that one can remove $i$ counters from a heap except if the heap has exactly $i$ counters, and possibly divide it into two heaps.

To study an octal game, it suffices to consider it on a single heap. Indeed, using Theorem~\ref{thm:grundysum}, one can obtain the Grundy value of any octal game by computing the nim-sum of its components. The \emph{Grundy sequence} of an octal game is the sequence of the Grundy values of the game on a heap of $n$ counters with $n=0,1,2,...$. For example, the Grundy sequence of {\bf 0.33} is $0,1,2,0,1,2,...$ since the Grundy value of the game {\bf 0.33} on a heap of size $n$ is $n \bmod 3$.

The Grundy sequence of {\bf 0.33} is periodic and one can prove that this is the case for all finite subtraction games \cite{winningways}. Actually, all the octal games which have been completely studied have an ultimately periodic Grundy sequence\footnote{For an up-to-date table of octal games, see \url{http://wwwhomes.uni-bielefeld.de/achim/octal.html}}. This led to the following conjecture, proposed by Guy:
\begin{conj}[Guy's conjecture \cite{Guy96}]
	All finite octal games have ultimately periodic Grundy sequences.
\end{conj}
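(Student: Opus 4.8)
The plan is to try to reduce the computation of the Grundy sequence $\grundy(0),\grundy(1),\grundy(2),\ldots$ of a finite octal game $\mathbf{0.u_1u_2\ldots u_k}$ (with $u_k\neq 0$) to the evolution of a finite-state object: if one can exhibit a finite set of ``states'' such that the state at $n$ determines the state at $n+1$ and such that $\grundy(n)$ is read off from the state, then ultimate periodicity follows immediately, since a deterministic evolution over a finite state space must eventually revisit a state and hence repeat forever. The whole effort goes into building such a finite state. I would split the analysis according to the two kinds of moves available from a heap of size $n$: removing $i\le k$ counters while leaving a single heap, and removing $i\le k$ counters while splitting the remainder into two nonempty heaps.

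First I would isolate the non-splitting contribution. The options obtained by merely removing $i$ counters (the cases $b^i_1=1$ or $b^i_2=1$) depend only on $\grundy(n-1),\ldots,\grundy(n-k)$, i.e.\ on a sliding window of the last $k$ values. In the subtraction-game case, where no $u_i$ enables splitting, $\grundy(n)$ is a function of this window alone; the window then evolves as a deterministic automaton and, once one also argues the values stay bounded, periodicity is immediate, recovering the known periodicity of finite subtraction games recalled earlier. All the difficulty is therefore concentrated in the splitting moves. A split after removing $i$ counters contributes, via the Sprague--Grundy Theorem, every value $\grundy(a)\oplus\grundy(b)$ with $a+b=n-i$ and $a,b\ge 1$. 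Unlike the window above, this depends on the \emph{entire} prefix $\grundy(1),\ldots,\grundy(n-i)$, so the plan here is twofold: (i) first establish a uniform bound $\grundy(n)\le B$ for all $n$, so that the Grundy values live in a finite alphabet; and (ii) show that the set $S_n=\{\grundy(a)\oplus\grundy(b): a+b=n,\ a,b\ge1\}$ of achievable split-sums is itself ultimately periodic, or at least eventually determined by bounded data. Granting (i) and (ii), the relevant state — the window of the last $k$ Grundy values together with the split-sum information — would range over a finite set and evolve deterministically, and the pigeonhole argument would finish the proof. As a computational shortcut for individual games one could instead invoke a Guy--Smith-style period-detection theorem, which certifies that an empirically observed period $p$ with the correct saltus persists forever once the defining recurrence has been checked over a window whose length is linear in $p$ and in the span $k$.

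The main obstacle is precisely steps (i) and (ii), and this is exactly what keeps the statement a conjecture rather than a theorem. For general finite octal games there is \emph{no} known uniform bound on the Grundy values, and no control over how the achievable split-sums $S_n$ evolve. The nim-sum coupling introduced by splitting is fundamentally non-local: a single large Grundy value occurring far back in the sequence can inject a brand-new value into $S_n$ arbitrarily late, which defeats any bounded-state model. For concrete games such as $\mathbf{0.007}$, enormous computations have revealed neither a bound on the Grundy values nor a period, so neither ingredient of the reduction is available. I therefore expect that a genuine proof would require either a structural reason forcing the Grundy values to remain bounded, or an entirely new invariant replacing the sliding window and taming the global dependence of split moves; producing such an argument — rather than the routine pigeonhole it would feed into — is the crux, and it is beyond what the techniques surveyed here can deliver.
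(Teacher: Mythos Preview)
The statement you were asked to address is labelled in the paper as a \emph{conjecture} (Guy's conjecture), not a theorem; the paper offers no proof and simply records it as an open problem motivating the study of octal games. There is therefore nothing in the paper to compare your attempt against.

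Your write-up is in fact consistent with this: you do not claim a proof, but rather sketch why the naive finite-state reduction breaks down once splitting moves are allowed, and you explicitly conclude that producing such an argument ``is beyond what the techniques surveyed here can deliver''. That is an accurate diagnosis of the status of the problem. Just be aware that what you have written is a discussion of the obstacles, not a proof proposal in any meaningful sense; steps~(i) (a uniform bound on Grundy values) and~(ii) (eventual periodicity of the split-sum sets $S_n$) are not established facts you could ``grant'' --- they are essentially restatements of the conjecture itself, and for games like $\mathbf{0.007}$ they remain wide open despite massive computation.
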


%In \cite{winningways}, a lot of subtraction games are studied. If the set $L$ is finite, the set of numbers of initial counters for which the first player has a winning strategy is periodic.  However, we do not know the size of the period, even with a small set $L$~\cite{althofer}.

\subsection{Octal games on graphs}

A natural question is whether this periodicity can be extended to more complex structures. A relevant structure is graphs. Indeed, as explained in the introduction, octal games are generally played with counters in a row. Considering a row of counters as a path and replacing the notion of consecutive counters by connected components, we get the following definition of octal games on graphs:

\begin{defi}[Octal game on graphs]
	\label{def:octalGamesOnGraphs}
{\rm Let $u_1,u_2,\ldots,u_n,\ldots$ be nonnegative integers such that for all $i$, $u_i \leq 7$.
 Let $G$ be a graph.
  
 In the octal game ${\bf 0.u_1u_2...u_n...}$ played on $G$, a player can remove a set $X_i$ of $i$ vertices of $G$ if and only if $u_i \neq 0$ and $X_i$ induces a connected graph.

 Moreover, if we write ${ u_i}$ as ${ u_i} = b^i_1 + 2 \cdot b^i_2 + 4 \cdot b^i_3$, with $b^i_j\in\{0,1\}$, and $H$ is the connected component of $G$ containing $X_i$, then:
	\begin{enumerate}
		\item the player can remove $H$ (i.e. $X_i=V(H)$) if and only if $b^i_1=1$;
		\item the player can leave $H$ connected with at least one vertex remaining (i.e $H\setminus \{X_i\}$ is nonempty and connected) if and only if $b^i_2=1$;
	     \item the player can disconnect $H$ if and only if $b^i_3=1$.
	\end{enumerate}}
\end{defi}

If $G$ is a path, then the game is equivalent to the corresponding standard octal game of Definition \ref{def:octalGames}. We now consider several examples. The game ${\bf 0.33}$ on a connected graph corresponds to the game where one can take one vertex or two adjacent vertices without disconnecting it. The game ${\bf 0.07}$ corresponds to the game where one can remove any two adjacent vertices of the graph. That is exactly the well-known game {\sc Arc Kayles} \cite{S78}. Recently, Adams {\it et al.} \cite{adams} studied the game {\sc Grim} that is exactly {\bf 0.6} on some graphs (players are allowed to remove any vertex of the graph, except if it is an isolated vertex). A scoring version of ${\bf 0.6}$ is also currently studied \cite{DGM}. Hence our definition is relevant with existing work. Note that the well-known game {\sc Node Kayles} cannot be seen as such an octal game even though on a path it is equivalent to ${\bf 0.137}$. Indeed, in {\sc Nodes Kayles}, when four vertices can be removed, they cannot induce a $P_4$. This cannot match our definition.

\begin{rem}
{\rm In the definition of octal games, if $b^i_3=1$, then the players can split a nonempty heap in exactly two nonempty heaps. Generalizations of octal games may then be defined with $b^i_j$ for $j \geq 4$ in order to allow the splitting of a nonempty heap into more than two nonempty heaps. However, our extension of octal games on graphs do not make this distinction: if $b^i_3=1$, then the players can disconnect the graph and leave as many components as they like. Thus this move is not a move that leaves a given number of components, but one which breaks the connectivity of a graph. This is still coherent with the definition of octal games on a row of counters since the path graph can only be split in two components, and allows us to include previously defined vertex deletion games, such as \textsc{Arc Kayles} and \textsc{Grim}.}
\end{rem}

\begin{rem} {\rm We ask for the $i$ removed vertices to form a connected component for two reasons. First, in traditional octal games, the counters are generally taken consecutively. The second reason is that if we remove this condition, then all subtraction games on graphs will be trivial. Indeed, it is always possible to remove a vertex of a connected graph and keep the graph connected. Therefore it is also always possible to remove $i$ vertices of the graph without disconnecting it if the vertices do not need to induce a connected graph. Thus playing a subtraction game on a graph would be equivalent to playing the same game on a path with the same number of vertices and we lose the interest of considering more complex structures. With our definition, subtraction games on graphs are not so straightforward.}
\end{rem}

%dire quelque chose sur 2 comp -> deconnecte ?

In the rest of this paper, we focus on one octal game, namely ${\bf 0.33}$, for which we provide a detailed analysis on subdivided stars and bistars: by proving lemmas about reducibility of paths, we provide an equivalence between families of stars and bistars which allows us to determine their Grundy value.

%%%%%%% 0.33 SUR LES ÉTOILES
\section{A study of the {\bf 0.33} game on subdivided stars}\label{sec:star}

If $n$ is an integer, we define the graph $P_n$ as the path on $n$ vertices, with $n-1$ edges.

A subdivided star is the tree obtained by subdividing each edge of a star $K_{1,k}$ (with $k \geq 0$) as many times as we want. Each of the subdivided edges will be called a path. A subdivided star is denoted by \sstar{\ell_1,\ldots,\ell_k}, where $\ell_i \geq 1$ is the number of vertices of the $i$th path. Figure~\ref{fig:exkpod} shows an example of such a graph.

The standard definition of subdivided stars actually requires $k \geq 3$ and thus excludes the paths, however we will need to consider the paths as base cases for subdivided stars and bistars. This is why we will consider the subdivided star \sstar{\ell_1} (resp. \sstar{\ell_1,\ell_2}) which is isomorphic to $P_{\ell_1+1}$ (resp. to $P_{\ell_1+\ell_2+1}$). Note that the star $K_{1,0}$ is isomorphic to $P_1$. For clarity, the notation as paths will be used whenever applicable.

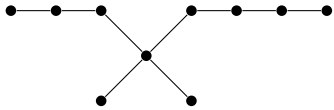
\begin{figure}[H]
	\centering
	\begin{tikzpicture}[scale=0.6]
	\node[noeud] (c) at (1,2) {};
	
	\node[noeud] (2) at (0,1) {};
	\node[noeud] (2b) at (2,1) {};
	\node[noeud] (5) at (0,3) {};
	\node[noeud] (6) at (-1,3) {};
	\node[noeud] (7) at (-2,3) {};
	\node[noeud] (5b) at (2,3) {};
	\node[noeud] (6b) at (3,3) {};
	\node[noeud] (7b) at (4,3) {};
	\node[noeud] (8b) at (5,3) {};
	
	\draw (2) to (c);
	\draw (2b) to (c);
	\draw (c) to (5) to (6) to (7);
	\draw (c) to (5b) to (6b) to (7b) to (8b);
	\end{tikzpicture}
	\caption{The subdivided star \sstar{1,1,3,4}.} % LE CORBEAU AAAW AAAAW
	\label{fig:exkpod}
\end{figure}

In the {\bf 0.33} game played on a graph, players can remove a vertex or two adjacent vertices from the graph, provided that they do not disconnect the graph. Figure~\ref{fig:ex033star} shows the moves that are available for the first player on a subdivided star. Note that in every figure describing moves, the original position will be boxed.

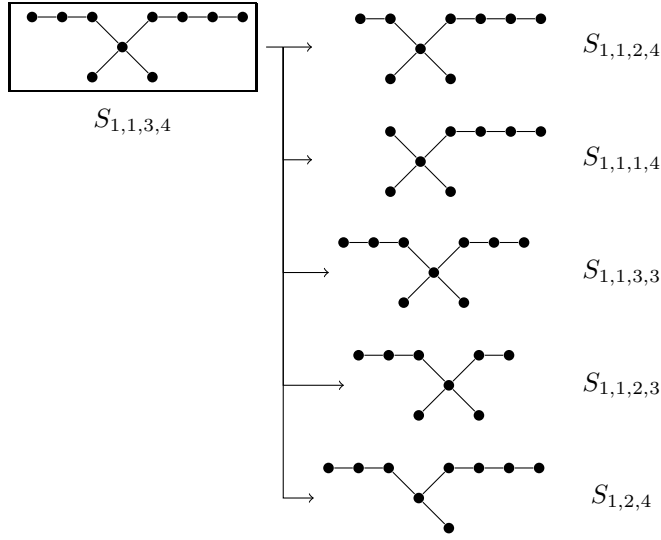
\begin{figure}[!h]
	\centering
	\begin{tikzpicture}
		\node (orig) at (0,0) {\fbox{
			\begin{tikzpicture}[scale=0.4]
			\node[noeud] (c) at (1,2) {};
			
			\node[noeud] (2) at (0,1) {};
			\node[noeud] (2b) at (2,1) {};
			\node[noeud] (5) at (0,3) {};
			\node[noeud] (6) at (-1,3) {};
			\node[noeud] (7) at (-2,3) {};
			\node[noeud] (5b) at (2,3) {};
			\node[noeud] (6b) at (3,3) {};
			\node[noeud] (7b) at (4,3) {};
			\node[noeud] (8b) at (5,3) {};
			
			\draw (2) to (c);
			\draw (2b) to (c);
			\draw (c) to (5) to (6) to (7);
			\draw (c) to (5b) to (6b) to (7b) to (8b);
			\end{tikzpicture}}
		};
		\node (playOnP3-1) at (4,0) {
				\begin{tikzpicture}[scale=0.4]
				\node[noeud] (c) at (1,2) {};
				
				\node[noeud] (2) at (0,1) {};
				\node[noeud] (2b) at (2,1) {};
				\node[noeud] (5) at (0,3) {};
				\node[noeud] (6) at (-1,3) {};
				\node (7) at (-2,3) {};
				\node[noeud] (5b) at (2,3) {};
				\node[noeud] (6b) at (3,3) {};
				\node[noeud] (7b) at (4,3) {};
				\node[noeud] (8b) at (5,3) {};
				
				\draw (2) to (c);
				\draw (2b) to (c);
				\draw (c) to (5) to (6);
				\draw (c) to (5b) to (6b) to (7b) to (8b);
				\end{tikzpicture}
		};
		\node (playOnP3-2) at (4,-1.5) {
			\begin{tikzpicture}[scale=0.4]
			\node[noeud] (c) at (1,2) {};
			
			\node[noeud] (2) at (0,1) {};
			\node[noeud] (2b) at (2,1) {};
			\node[noeud] (5) at (0,3) {};
			\node (6) at (-1,3) {};
			\node (7) at (-2,3) {};
			\node[noeud] (5b) at (2,3) {};
			\node[noeud] (6b) at (3,3) {};
			\node[noeud] (7b) at (4,3) {};
			\node[noeud] (8b) at (5,3) {};
			
			\draw (2) to (c);
			\draw (2b) to (c);
			\draw (c) to (5);
			\draw (c) to (5b) to (6b) to (7b) to (8b);
			\end{tikzpicture}
		};
		\node (playOnP4-1) at (4,-3) {
				\begin{tikzpicture}[scale=0.4]
				\node[noeud] (c) at (1,2) {};
				
				\node[noeud] (2) at (0,1) {};
				\node[noeud] (2b) at (2,1) {};
				\node[noeud] (5) at (0,3) {};
				\node[noeud] (6) at (-1,3) {};
				\node[noeud] (7) at (-2,3) {};
				\node[noeud] (5b) at (2,3) {};
				\node[noeud] (6b) at (3,3) {};
				\node[noeud] (7b) at (4,3) {};
				%\node[noeud] (8b) at (5,3) {};
				
				\draw (2) to (c);
				\draw (2b) to (c);
				\draw (c) to (5) to (6) to (7);
				\draw (c) to (5b) to (6b) to (7b);
				\end{tikzpicture}
		};
		\node (playOnP4-2) at (4,-4.5) {
			\begin{tikzpicture}[scale=0.4]
			\node[noeud] (c) at (1,2) {};
			
			\node[noeud] (2) at (0,1) {};
			\node[noeud] (2b) at (2,1) {};
			\node[noeud] (5) at (0,3) {};
			\node[noeud] (6) at (-1,3) {};
			\node[noeud] (7) at (-2,3) {};
			\node[noeud] (5b) at (2,3) {};
			\node[noeud] (6b) at (3,3) {};
			%\node[noeud] (7b) at (4,3) {};
			%\node[noeud] (8b) at (5,3) {};
			
			\draw (2) to (c);
			\draw (2b) to (c);
			\draw (c) to (5) to (6) to (7);
			\draw (c) to (5b) to (6b);
			\end{tikzpicture}
		};
		\node (playOnP1) at (4,-6) {
				\begin{tikzpicture}[scale=0.4]
				\node[noeud] (c) at (1,2) {};
				
				%\node[noeud] (2) at (0,1) {};
				\node[noeud] (2b) at (2,1) {};
				\node[noeud] (5) at (0,3) {};
				\node[noeud] (6) at (-1,3) {};
				\node[noeud] (7) at (-2,3) {};
				\node[noeud] (5b) at (2,3) {};
				\node[noeud] (6b) at (3,3) {};
				\node[noeud] (7b) at (4,3) {};
				\node[noeud] (8b) at (5,3) {};
				
				%\draw (2) to (c);
				\draw (2b) to (c);
				\draw (c) to (5) to (6) to (7);
				\draw (c) to (5b) to (6b) to (7b) to (8b);
				\end{tikzpicture}
		};
		
		\node at (0,-1) {\sstar{1,1,3,4}};
		\node at (6.5,0) {\sstar{1,1,2,4}};
		\node at (6.5,-1.5) {\sstar{1,1,1,4}};
		\node at (6.5,-3) {\sstar{1,1,3,3}};
		\node at (6.5,-4.5) {\sstar{1,1,2,3}};
		\node at (6.5,-6) {\sstar{1,2,4}};
		
		\draw (orig) to (2,0);
		\draw[->] (2,0) -- (playOnP3-1);
		\draw[->] (2,0) |- (playOnP3-2);
		\draw[->] (2,0) |- (playOnP4-1);
		\draw[->] (2,0) |- (playOnP4-2);
		\draw[->] (2,0) |- (playOnP1);
	\end{tikzpicture}
	\caption{The available moves for the first player in the {\bf 0.33} game played on the subdivided star \sstar{1,1,3,4}.}
	\label{fig:ex033star}
\end{figure}

The {\bf 0.33} game on paths and cycles has the same nim-sequence as the {\bf 0.33} game on heaps of counters:
\begin{prop}
	\label{prop:033pathsAndCycles}
	For any $n \geq 0$, $\grundy(P_n) = \grundy(C_n) = n \bmod 3$.
\end{prop}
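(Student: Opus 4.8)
The plan is to prove both equalities by induction on $n$, treating the path and the cycle in parallel. The base cases $n=0,1,2$ are immediate: $P_0$ and $C_0$ are empty, $P_1=C_1$ is a single vertex (Grundy value $1$ — note $C_1$ should be read as a single vertex here, or one simply starts the cycle induction at $n=3$), and $P_2$, $C_2$ are handled directly. For the inductive step, suppose the claim holds for all smaller graphs and consider $P_n$ with $n\geq 3$. The legal moves in \textbf{0.33} are: remove a single vertex without disconnecting, or remove two adjacent vertices without disconnecting. On a path, removing an interior vertex disconnects it, so the only single-vertex move is to delete an endpoint, yielding $P_{n-1}$; similarly the only two-vertex move deletes the two vertices at one end, yielding $P_{n-2}$. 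Hence $\opt(P_n)=\{P_{n-1},P_{n-2}\}$, and by induction $\grundy(P_n)=\mex\{(n-1)\bmod 3,\ (n-2)\bmod 3\}$, which is exactly $n\bmod 3$ since the two values are the two residues different from $n\bmod 3$.

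For the cycle $C_n$ with $n\geq 3$: deleting any single vertex keeps the graph connected and produces $P_{n-1}$; deleting any two adjacent vertices keeps it connected and produces $P_{n-2}$. By vertex-transitivity of $C_n$ these are (up to isomorphism) the only options, so $\opt(C_n)=\{P_{n-1},P_{n-2}\}$ as well, and the same $\mex$ computation, now using the already-established values for paths, gives $\grundy(C_n)=n\bmod 3$. One small point to check is the boundary case: on $C_3$, deleting two adjacent vertices leaves $P_1$, and on $C_4$ it leaves $P_2$ — both consistent with the formula — so no degenerate situation arises.

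I do not expect a genuine obstacle here; the only thing requiring care is the case analysis of which moves are actually \emph{legal}, i.e. which deletions keep the graph connected. The key observation doing all the work is that on a path the connectivity constraint forces every move to act at an end, and on a cycle the constraint is vacuous for the relevant moves while transitivity collapses all choices to a single orbit; in both cases this pins down $\opt$ to exactly $\{P_{n-1},P_{n-2}\}$, after which the result is just the identity $\mex\{(n-1)\bmod 3,(n-2)\bmod 3\}=n\bmod 3$. If anything, the mild subtlety is making sure the cycle induction is correctly seeded (starting at $n=3$ and importing the path values rather than trying to run a self-referential cycle recursion), which the proof handles by proving the path statement first.
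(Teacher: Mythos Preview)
Your proof is correct and is exactly the natural argument: on $P_n$ the connectivity constraint forces every move to act at an end, and on $C_n$ every move yields a path, so in both cases $\opt=\{P_{n-1},P_{n-2}\}$ and the $\mex$ identity finishes. The paper itself does not supply a proof of this proposition---it is stated as an immediate consequence of the fact that the \textbf{0.33} game on a path coincides with the classical \textbf{0.33} subtraction game on a heap (whose Grundy sequence $0,1,2,0,1,2,\ldots$ is recalled just before)---so your write-up simply fills in the routine details the authors omitted.
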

In this section, we will prove a similar result for subdivided stars: every path of length $\ell$ can be reduced to a path of length $\ell \bmod 3$ without changing the Grundy value.

\begin{theorem}
	\label{thm:modkpodes}
	For all $\ell_1,\ldots,\ell_k$, we have $\grundy($\sstar{\ell_1,\ldots,\ell_k}$)=\grundy($\sstar{\ell_1 \bmod 3,\ldots,\ell_k \bmod 3}$)$.
\end{theorem}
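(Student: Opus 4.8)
The natural approach is to prove that reducing a single path by $3$ vertices leaves the Grundy value unchanged, and then iterate over all $k$ paths. So the heart of the matter is the following claim: for any subdivided star $S$ and any path of $S$ with $\ell \geq 4$ vertices, replacing that path by one with $\ell - 3$ vertices preserves the Grundy value. By symmetry it suffices to handle the last path, i.e. to show $\grundy(S_{\ell_1,\dots,\ell_{k-1},\ell_k}) = \grundy(S_{\ell_1,\dots,\ell_{k-1},\ell_k-3})$ whenever $\ell_k \geq 4$. Once this is established, a straightforward induction on $\ell_1 + \dots + \ell_k$ (reducing one long path at a time) gives the theorem.

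To prove the single-path reduction I would use the standard tool for showing two impartial games have equal Grundy value: show that their \emph{disjoint sum} is a $\outcomeP$-position, equivalently exhibit a winning strategy for the second player in $S_{\ell_1,\dots,\ell_k} + S_{\ell_1,\dots,\ell_{k-1},\ell_k-3}$. The strategy is a pairing/mirroring argument: the second player answers a move in one component by ``the same'' move in the other component, keeping the invariant that the two positions differ only by a dangling segment of exactly three vertices on the distinguished path (or are identical, once that path has been shortened below length $3$ in both). The cases to check are: (i) a move played deep inside the long tail of either path — mirror it in the other component, the length difference of $3$ is maintained; (ii) a move played near the center or on one of the other paths $\ell_1,\dots,\ell_{k-1}$ — mirror it verbatim in the other component; (iii) a move that shortens the distinguished path down into the ``contested'' region, for which one checks directly that the resulting pair of positions is again of the required form (possibly now with both distinguished paths equal, or both reduced to a path graph where Proposition~\ref{prop:033pathsAndCycles} applies). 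A subtlety is that in the {\bf 0.33} game one may remove one \emph{or} two vertices and may not disconnect the graph: removing a vertex from the interior of a path or from the center (when the center has degree $\geq 3$ and at least two incident paths are nontrivial) is illegal, so the legal moves on a subdivided star are exactly removals of one or two vertices from the \emph{leaf end} of a path, plus possibly the two-vertex move eating the center when only two paths remain — the strategy must respect these restrictions, but that actually \emph{helps}, since it limits how the contested region can be attacked.

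Alternatively, and perhaps more cleanly for writing, one can avoid the sum game and argue directly by induction on the number of vertices, comparing the option sets of $S_{\ell_1,\dots,\ell_k}$ and $S_{\ell_1,\dots,\ell_{k-1},\ell_k-3}$: every option of one is, after applying the induction hypothesis to shorten paths modulo $3$, Grundy-equivalent to an option of the other, and vice versa; hence the two mex computations agree. Here one lists the option types — removing $1$ or $2$ vertices from the free end of some path $\ell_i$ with $i<k$, from the free end of the long path $\ell_k$, or the central move collapsing two short paths — and matches them up across the two stars, using the induction hypothesis to reduce every resulting smaller star to its mod-$3$ form. One must separately treat the small cases $\ell_k \in \{1,2,3\}$ and the degenerate cases where $S$ is actually a path ($k \leq 2$), where Proposition~\ref{prop:033pathsAndCycles} is invoked as the base.

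The main obstacle I expect is the bookkeeping when the reduction shortens the distinguished path enough that the ``star'' degenerates or that a move can reach across the junction — i.e. making the mirroring invariant precise and checking it survives moves that straddle the boundary between the fixed part and the three contested vertices, including the central two-vertex move when only two paths are left. Everything else is routine case analysis; the content is entirely in setting up the right invariant so that these boundary cases close up.
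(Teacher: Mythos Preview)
Your approach is essentially the paper's: prove that adjoining or removing three vertices on one path preserves the Grundy value by showing the disjoint sum is a $\outcomeP$-position via a mirroring strategy, the key non-mirror reply being that when the opponent removes $1$ (resp.\ $2$) vertices from the dangling $P_3$ you remove the other $2$ (resp.\ $1$), collapsing to $S+S$. One point to tighten: your hypothesis $\ell_k \geq 4$ only reduces each path to length in $\{1,2,3\}$, not $\{0,1,2\}$; the paper therefore states its lemma as ``a $P_3$ can be attached to any leaf \emph{or to the central vertex}'', which also covers deleting a path of length exactly~$3$, and the same mirroring argument handles that case.
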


To prove this theorem, it suffices to prove that a $P_3$ can be attached to the central vertex or attached to a leaf of a subdivided star without changing the Grundy value. This will follow from a series of lemmas. First we make an observation that will be useful for several proofs.

\begin{obs}
\label{lem:keylemma}
Let $P_n$ be a path with $n \geq 4$, and $x$ a vertex of $P_n$. Then a move in $P_n$ that removes $x$ has an equivalent move that does not remove $x$: removing the symmetric of $x$ leads to the same position.
\end{obs}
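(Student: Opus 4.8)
The plan is to enumerate the legal moves of ${\bf 0.33}$ on a path and exploit the reflective symmetry of $P_n$. Label the vertices of $P_n$ as $v_1, v_2, \ldots, v_n$ along the path, and let $\sigma$ be the unique non-trivial automorphism of $P_n$, given by $\sigma(v_i) = v_{n+1-i}$; the \emph{symmetric} of a vertex $u$ is $\sigma(u)$. The starting point is that $\sigma$ is a graph automorphism, so for any move $m$ removing a set $S$ of vertices, the reflected move $\sigma(m)$ removing $\sigma(S)$ is also legal, and the resulting graphs $P_n - S$ and $P_n - \sigma(S) = \sigma(P_n - S)$ are isomorphic, hence have the same Grundy value and are equivalent as game positions.

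Next I would observe that in ${\bf 0.33}$ played on $P_n$ with $n \geq 4$, every legal move removes a set $S$ with $S \subseteq \{v_1, v_2\}$ or $S \subseteq \{v_{n-1}, v_n\}$. Indeed, removing a single interior vertex, or a pair of adjacent interior vertices, would split the path into two non-empty components, which is forbidden since the octal digit $3 = 1 + 2$ has $b_3 = 0$; so only the two endpoints or the two endpoint-pairs can be taken. (Here the hypothesis $n\ge 4$ is used only to make sure the ``interior'' case is non-degenerate.)

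Finally I would combine the two facts. Given a move $m$ that removes $x$, by symmetry I may assume $S \subseteq \{v_1, v_2\}$, so $x \in \{v_1, v_2\}$ and $\sigma(S) \subseteq \{v_{n-1}, v_n\}$. Since $n \geq 4$, we have $\{v_1, v_2\} \cap \{v_{n-1}, v_n\} = \emptyset$, hence $x \notin \sigma(S)$: the reflected move $\sigma(m)$ is an equivalent move that does not remove $x$, and it removes exactly the symmetric vertices of those removed by $m$ (in particular the symmetric of $x$), as claimed. I do not expect a genuine obstacle here; the only point requiring care is the hypothesis $n \geq 4$, which is precisely what guarantees that the two ends of the path are disjoint — for $n \le 3$ the two endpoint-pairs of $P_3$ already share the central vertex, and the statement indeed fails there.
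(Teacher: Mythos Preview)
Your argument is correct. The paper states this as an observation without proof, so there is nothing to compare against; your approach---identifying the reflection automorphism, noting that legal ${\bf 0.33}$-moves on $P_n$ touch only $\{v_1,v_2\}$ or $\{v_{n-1},v_n\}$, and using $n\ge 4$ to ensure these two sets are disjoint---is exactly the intended (and only natural) justification.
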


In particular, we will use this observation when $x$ is the central vertex of a star with one or two paths.

\begin{lemma}
\label{lem:grundyStars}
Let $\ell \geq 0$ and $S=$\sstar{1,1,\ell}. We have $\grundy(S)=|V(S)| \bmod 3=\ell \bmod 3$.
\end{lemma}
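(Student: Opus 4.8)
The plan is to compute $\grundy(S)$ for $S = \sstar{1,1,\ell}$ directly by induction on $\ell$, analyzing all possible moves from $S$. Write $S = \sstar{1,1,\ell}$ with central vertex $c$, two pendant leaves $a,b$ attached to $c$, and a path $Q = v_1 v_2 \cdots v_\ell$ with $v_1$ adjacent to $c$. The key structural point is that removing $c$ disconnects $S$ (for $\ell \geq 1$), so $c$ is never removed; likewise the two leaves $a,b$ cannot both be removed, and removing the edge $cv_1$ or $v_1 v_2$ is forbidden when it disconnects things. So the legal moves essentially fall into a short list: (i) remove one leaf, say $a$, leaving $\sstar{1,\ell} \cong P_{\ell+2}$; (ii) remove one vertex from the interior or end of $Q$ not adjacent to $c$ (provided it does not disconnect), giving $\sstar{1,1,\ell-1}$; (iii) remove two adjacent vertices inside $Q$ (again avoiding $c$ and avoiding disconnection), giving $\sstar{1,1,\ell-2}$; (iv) when $\ell = 1$, remove $v_1$ together with a leaf, or $v_1$ alone, etc. I would enumerate these small cases ($\ell = 0,1,2$) by hand as base cases, using Proposition~\ref{prop:033pathsAndCycles} for the path values, and Observation~\ref{lem:keylemma} to argue that a move removing $c$ or $v_1$ always has a twin move that avoids it, so nothing is lost by assuming $c$ and its neighbor stay.

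For the inductive step ($\ell \geq 3$), I would show the set of Grundy values of options of $S$ is exactly $\{0,1,2\} \setminus \{\ell \bmod 3\}$ together with possibly repeats, so the mex is $\ell \bmod 3 = |V(S)| \bmod 3$. Concretely: option (i) gives $\grundy(P_{\ell+2}) = (\ell+2) \bmod 3$; option (ii) gives $\grundy(\sstar{1,1,\ell-1}) = (\ell-1) \bmod 3$ by the induction hypothesis, i.e. $(\ell+2) \bmod 3$ again; option (iii) gives $\grundy(\sstar{1,1,\ell-2}) = (\ell-2) \bmod 3 = (\ell+1) \bmod 3$ by induction. Thus the reachable values are $\{(\ell+1) \bmod 3, (\ell+2) \bmod 3\}$, which are precisely the two residues different from $\ell \bmod 3$; hence $\grundy(S) = \mex\{(\ell+1)\bmod 3,(\ell+2)\bmod 3\} = \ell \bmod 3$. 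One must double-check that moves of type (ii) and (iii) are actually available for $\ell \geq 3$ (they are: there is room in $Q$ away from $c$), and that no other move type produces a new Grundy value — e.g. moves that remove two vertices one of which is $v_1$: by Observation~\ref{lem:keylemma} applied on the relevant subpath, such a move has an equivalent one avoiding $v_1$, hence already counted.

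The main obstacle I anticipate is the careful bookkeeping of the small cases and the boundary moves near the center: precisely, checking that removing a vertex adjacent to $c$ from $Q$, or removing a pair containing $v_1$, is either illegal (disconnects) or equivalent via the symmetry observation to a move already enumerated, so that the option set is genuinely just the three families above. I would handle this by noting that in $S$ the only cut vertices relevant to $Q$ are $c$ and $v_1$ (the latter only when $\ell \geq 2$), and that any legal move touching $c$ or $v_1$ acts, locally, on a path $P_n$ with $n \geq 4$ through $c$ or $v_1$, so Observation~\ref{lem:keylemma} applies and replaces it by a move of type (ii) or (iii) or (i). Once this reduction is in place, the mex computation is immediate, and the equality $\ell \bmod 3 = |V(S)| \bmod 3$ follows since $|V(S)| = \ell + 3$.
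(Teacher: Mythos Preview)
Your proposal is correct and follows essentially the same approach as the paper: induction on $\ell$, enumerate the legal options (remove a leaf to get $P_{\ell+2}$, remove one or two vertices from the end of the long path to get $S_{1,1,\ell-1}$ or $S_{1,1,\ell-2}$), and compute the mex. One small simplification: your invocation of Observation~\ref{lem:keylemma} to handle moves touching $c$ or $v_1$ is unnecessary, since for $\ell\geq 1$ any such move (removing $c$, or $c$ together with a neighbour, or $v_1$ alone when $\ell\geq 2$, or $v_1v_2$ when $\ell\geq 3$) disconnects the graph and is therefore simply illegal rather than redundant --- the paper just notes this directly.
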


\begin{proof}
We use induction on $\ell$. First, suppose that one can remove the central vertex of $S$. This is only possible if $\ell=0$, thus $S=P_3$ and we are done.

Now, if $\ell\geq 1$, then one cannot remove the central vertex of $S$. In this case, up to three moves are available from $S$:
\begin{itemize}
	\item Removing one of the two leaves, leaving $P_{\ell+2}$ whose Grundy value is $(\ell+2) \bmod 3$;
	\item Removing one vertex from the path of length $\ell$, leaving a star whose Grundy value is $(\ell+2) \bmod 3$ by induction hypothesis;
	\item If $\ell \geq 2$, removing two vertices from the path of length $\ell$, leaving a star whose Grundy value is $(\ell+1) \bmod 3$ by induction hypothesis.
\end{itemize}
Thus, we have $\grundy(S)=\mex((\ell+1) \bmod 3, (\ell+2) \bmod 3)=\ell \bmod 3$. Note that if $\ell=1$ then all moves are equivalent and leave $P_3$, thus $\grundy(S)=\mex(\grundy(P_3))=\mex(0)=1$.
\end{proof}

\begin{lemma}
\label{lem:modkpodes}
A $P_3$ can be attached to any leaf or to the central vertex of a subdivided star without changing its Grundy value.
\end{lemma}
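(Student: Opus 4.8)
The plan is to prove the statement by strong induction on $n$, in the form: for every subdivided star $S$ on $n$ vertices, attaching a $P_3$ at a leaf of $S$, or at the central vertex of $S$, leaves the Grundy value unchanged. Write $S=S_{\ell_1,\ldots,\ell_k}$ with centre $c$; attaching a $P_3$ at $c$ yields $S_{\ell_1,\ldots,\ell_k,3}$, while attaching a $P_3$ at the leaf of branch $i$ yields the star obtained from $S$ by replacing $\ell_i$ with $\ell_i+3$. The base cases are the paths, handled by Proposition~\ref{prop:033pathsAndCycles} (a leaf-attachment, and a centre-attachment when the centre is an endpoint, both merely lengthen a path), together with a short finite list of small genuine stars, for which Lemma~\ref{lem:grundyStars} (in particular $\grundy(S_{1,1,3})=0=\grundy(P_3)$) and a direct mex computation suffice. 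Throughout, Observation~\ref{lem:keylemma} lets me ignore moves that remove the central vertex when the current position is a path (such a move has an equivalent one not removing it, so it affects no mex value); in a genuine subdivided star the centre has degree $\geq 3$ and cannot be removed at all.

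For the inductive step I compare the sets of Grundy values of the options of the enlarged star and of $S$. Take the leaf case, and write $w_1w_2w_3$ for the new branch vertices, with $w_1$ joined to the old leaf of branch $i$. Each legal move of the enlarged star is either (a) a move of $S$ that does not touch branch $i$ — such a move remains legal after the attachment and leads to $S'$ enlarged at branch $i$ in the same way, where $S'$ is the corresponding option of $S$; since $S'$ is smaller, the induction hypothesis gives that $S'$ enlarged has Grundy value $\grundy(S')$ — or (b) ``remove $w_3$'' or ``remove $\{w_2,w_3\}$'', replacing $\ell_i$ by $\ell_i+2$, resp.\ $\ell_i+1$. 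Applying the induction hypothesis to the smaller stars in which $\ell_i$ is replaced by $\ell_i-1$, resp.\ $\ell_i-2$ (with the convention that a branch of length $0$ means a deleted branch, which is the centre case of the hypothesis), identifies the Grundy values in (b) with those of the moves of $S$ that shorten branch $i$; these two sets of values match exactly when $\ell_i\geq 2$, while for $\ell_i=1$ a single value is left over, namely $\grundy$ of the star with $\ell_i$ replaced by $2$. The centre case is the same in spirit: writing $x_1x_2x_3$ for the new branch, the options not already covered by the induction hypothesis are ``remove $x_3$'' and ``remove $\{x_2,x_3\}$'', which attach a pendant $P_2$, resp.\ $P_1$, at $c$.

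To conclude I use the elementary fact that $\mex(A\cup B)=\mex(A)$ whenever $\mex(A)\notin B$. In each case $A$ is the set of Grundy values of the options of $S$, so $\mex(A)=\grundy(S)$, and each extra element of $B$ is a value $\grundy(G)$ where $G$ admits $S$ among its options: removing a pendant vertex, or both vertices of a pendant $P_2$, or (in the $\ell_i=1$ subcase) the leaf of the length-$2$ branch, reproduces $S$, so $\grundy(G)\neq\grundy(S)$. Hence no extra value equals $\mex(A)$, the mex is unchanged, and the enlarged star has Grundy value $\grundy(S)$; Theorem~\ref{thm:modkpodes} then follows by iterating the lemma to reduce each branch length modulo $3$. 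The part I expect to be delicate is the precise accounting of the options when very short branches are present (lengths $1$ and $2$, where the ``remove two vertices'' move is missing or immediately recreates $S$, and where a whole branch may vanish) and when $S$ must be treated as a path; the case split above, Observation~\ref{lem:keylemma}, and Lemma~\ref{lem:grundyStars} are exactly what is needed to absorb these.
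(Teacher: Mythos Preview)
Your proof is correct, but it takes a genuinely different route from the paper's. The paper argues that $S+S'$ (with $S'$ the enlarged star) is a $\outcomeP$-position by exhibiting a mirroring strategy for the second player: if the first player removes one (resp.\ two) vertices from the appended $P_3$, the second player removes the remaining two (resp.\ one), collapsing to $S+S$; any other move is copied on the other summand, and induction on $|V(S)|$ applies. The only non-mirrorable situations are when the centre of $S$ can be removed, and these reduce to a short list of paths and to Lemma~\ref{lem:grundyStars}. Your argument instead compares the option sets of $S$ and of the enlarged star directly: you match, via the induction hypothesis, every option of $S$ with an option of the enlarged star of equal Grundy value, and then observe that each leftover option $G$ (the star with a pendant $P_1$ or $P_2$ at the centre, or with $\ell_i$ replaced by $2$ when $\ell_i=1$) has $S$ itself among its options, so $\grundy(G)\neq\grundy(S)$ and the mex is unaffected.

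Both arguments are inductions on $|V(S)|$ and both lean on Observation~\ref{lem:keylemma} and Lemma~\ref{lem:grundyStars} to dispose of the path/small cases. The paper's sum-of-games proof is shorter and avoids any bookkeeping of Grundy values; your mex-comparison is more hands-on but has the virtue of being entirely self-contained (it never invokes the Sprague--Grundy theorem or the notion of a $\outcomeP$-position in a disjoint sum). One small remark: your phrase ``immediately recreates $S$'' in the final paragraph is a bit obscure---no single move on the enlarged star returns to it---but this is only a wording issue and does not affect the argument.
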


\begin{proof}
Let $S$ be a subdivided star, and $S'$ be the subdivided star obtained by attaching a $P_3$ to any leaf or to the central vertex of $S$. We show that $S + S'$ is a $\outcomeP$-position by proving that the second player can always play to a $\outcomeP$-position after the first player's move. The proof is by induction on $|V(S)|$.

Suppose first that the first player can remove the central vertex of $S$:
\begin{itemize}
\item If $S$ is empty (resp. $S=P_1$, $S=P_2$), then $S'=P_3$ (resp. $S'=P_4$, $S'=P_5$), and thus $S+S'$ is a $\outcomeP$-position since $\grundy(S)=\grundy(S')$;
\item If $S=P_3$, then either $S'=P_6$ and we are done, or $S'=$\sstar{1,1,3} and the result follows from Lemma~\ref{lem:grundyStars}.
\item If $S=S_{\ell}$ with $\ell \geq 4$ or $S=S_{1,\ell}$ with $\ell \geq 2$, then, as stated in Observation~\ref{lem:keylemma}, the second player will always be able to replicate the first player's move on $S'$, by playing the symmetrical move. By induction hypothesis, the new position will be a $\outcomeP$-position.
\end{itemize}

Suppose now that the first player cannot remove the central vertex of $S$:
\begin{itemize}
\item If the first player takes one vertex (resp. two vertices) from the attached $P_3$ in $S'$, then the second player takes two vertices (resp. one vertex) from it, leaving $S + S$ which is a $\outcomeP$-position.
\item If the first player plays elsewhere on $S'$, the second player answers by playing the same move on $S$. By induction hypothesis, the new position will be a $\outcomeP$-position.
\item If $S \neq P_m$, then the first player cannot remove the central vertex. In this case, for every first player's move on $S$, the second player can replicate it on $S'$, allowing us to invoke the induction hypothesis.
\end{itemize}
\end{proof}

Theorem~\ref{thm:modkpodes} then directly follows from Lemma~\ref{lem:modkpodes}.
Hence, all paths of length $3p$ can be removed, all paths of length $3p+1$ can be reduced to a single edge, and all paths of length $3p+2$ can be reduced to a path of length 2. If we want to know the Grundy value of a given subdivided star, it then suffices to study the Grundy values of the subdivided stars with paths of length 1 and 2 attached to their central vertex.

We are able to build a table of positions and their options: the subdivided star in row $i$ and column $j$, $j \leq i$, has $i$ paths attached to its central vertex, $j$ of them being of length 2. Figure~\ref{fig:tabpos} shows the first six rows of this table (the first two rows correspond to the empty graph and the subdivided star reduced to its central vertex, respectively):

\begin{figure}[!h]
\centering
\begin{tikzpicture}
	\draw [->,thick] (-1,1.5) -- (-1,-5.5);
	\draw [->,thick] (-1,1.5) -- (10.5,1.5);
	
	\draw (4.75,2.4) node {Number of paths of length 2 in the subdivided star};
	\draw (-1.8,-2) node[rotate=90]  {Number of paths in the subdivided star};
	
	\draw (-1.2,0) node {0};
	\draw (-1.2,-1) node {1};
	\draw (-1.2,-2) node {2};
	\draw (-1.2,-3) node {3};
	\draw (-1.2,-4) node {4};
	\draw (-1.2,-5) node {5};
	
	\draw (0,1.8) node {0};
	\draw (2,1.8) node {1};
	\draw (4,1.8) node {2};
	\draw (6,1.8) node {3};
	\draw (8,1.8) node {4};
	\draw (10,1.8) node {5};
	
	\node (empty) at (0,1) {$\emptyset$};
	\node (p1) at (0,0) {$P_1$};
	
	\node (p2) at (0,-1) {$P_2$};
	\node (p3) at (2,-1) {$P_3$};

	\node (p3b) at (0,-2) {$P_3$};
	\node (p4) at (2,-2) {$P_4$};
	\node (p5) at (4,-2) {$P_5$};
	
	\node (s111) at (0,-3) {\sstar{1,1,1}};
	\node (s112) at (2,-3) {\sstar{1,1,2}};
	\node (s122) at (4,-3) {\sstar{1,2,2}};
	\node (s222) at (6,-3) {\sstar{2,2,2}};
	
	\node (s1111) at (0,-4) {\sstar{1,1,1,1}};
	\node (s1112) at (2,-4) {\sstar{1,1,1,2}};
	\node (s1122) at (4,-4) {\sstar{1,1,2,2}};
	\node (s1222) at (6,-4) {\sstar{1,2,2,2}};
	\node (s2222) at (8,-4) {\sstar{2,2,2,2}};
	
	\node (s11111) at (0,-5) {\sstar{1,1,1,1,1}};
	\node (s11112) at (2,-5) {\sstar{1,1,1,1,2}};
	\node (s11122) at (4,-5) {\sstar{1,1,1,2,2}};
	\node (s11222) at (6,-5) {\sstar{1,1,2,2,2}};
	\node (s12222) at (8,-5) {\sstar{1,2,2,2,2}};
	\node (s22222) at (10,-5) {\sstar{2,2,2,2,2}};
	
	\draw [->] (p1) to (empty);
	\draw [->] (p2) to[out=120, in=-120] (empty);
	\draw [->] (p3b) to[out=120, in=-120] (p1);
	\draw [->] (p2) to (p1);
	\draw [->] (p3) to (p1);
	\draw [->] (p3) to (p2);
	\draw [->] (p3b) to (p2);
	\draw [->] (p4) to (p3b);
	\draw [->] (p4) to (p2);
	\draw [->] (p4) to (p3);
	\draw [->] (p5) to (p3);
	\draw [->] (p5) to (p4);
	\draw [->] (s111) to (p3b);
	\draw [->] (s112) to (p3b);
	\draw [->] (s112) to (p4);
	\draw [->] (s112) to (s111);
	\draw [->] (s122) to (p5);
	\draw [->] (s122) to (p4);
	\draw [->] (s122) to (s112);
	\draw [->] (s222) to (p5);
	\draw [->] (s222) to (s122);
	\draw [->] (s1111) to (s111);
	\draw [->] (s1112) to (s111);
	\draw [->] (s1112) to (s112);
	\draw [->] (s1112) to (s1111);
	\draw [->] (s1122) to (s112);
	\draw [->] (s1122) to (s122);
	\draw [->] (s1122) to (s1112);
	\draw [->] (s1222) to (s122);
	\draw [->] (s1222) to (s222);
	\draw [->] (s1222) to (s1122);
	\draw [->] (s2222) to (s222);
	\draw [->] (s2222) to (s1222);
	\draw [->] (s11111) to (s1111);
	\draw [->] (s11112) to (s1111);
	\draw [->] (s11112) to (s1112);
	\draw [->] (s11112) to (s11111);
	\draw [->] (s11122) to (s1112);
	\draw [->] (s11122) to (s1122);
	\draw [->] (s11122) to (s11112);
	\draw [->] (s11222) to (s1122);
	\draw [->] (s11222) to (s1222);
	\draw [->] (s11222) to (s11122);
	\draw [->] (s12222) to (s1222);
	\draw [->] (s12222) to (s2222);
	\draw [->] (s12222) to (s11222);
	\draw [->] (s22222) to (s2222);
	\draw [->] (s22222) to (s12222);
\end{tikzpicture}
\caption{The first six rows of the table of positions and their options.}
\label{fig:tabpos}
\end{figure}
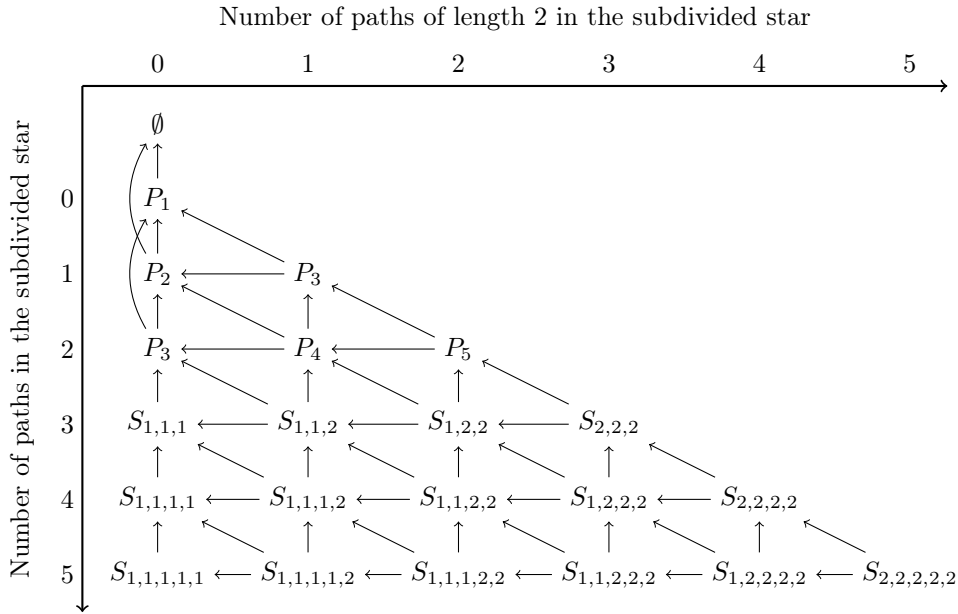

Since the Grundy value of the empty graph is 0, we can deduce the Grundy value of every star by proceeding inductively from the top lines:

\begin{theorem}
	\label{thm:grunStars}
	Figure~\ref{fig:tabgrun} shows the table of the Grundy values of subdivided stars after reduction of their paths modulo 3.
\end{theorem}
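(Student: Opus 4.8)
The plan is to reduce everything to a two–parameter family and then verify that the claimed table satisfies the $\mex$ recurrence coming from Figure~\ref{fig:tabpos}. By Theorem~\ref{thm:modkpodes} it suffices to compute $\grundy$ on the reduced subdivided stars; for $0\le j\le i$ let $S_{i,j}$ denote the subdivided star with $i$ paths attached to the centre, $j$ of them of length~$2$ and $i-j$ of length~$1$ (so $S_{i,j}$ sits in row $i$, column $j$ of Figure~\ref{fig:tabpos}). The first step is to determine $\opt(S_{i,j})$. If $i\le 2$ then $S_{i,j}$ is the path $P_{1+i+j}$, and Proposition~\ref{prop:033pathsAndCycles} gives $\grundy(S_{i,j})=(1+i+j)\bmod 3$, which are the first rows of the table. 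If $i\ge 3$ the centre has degree $\ge 3$, hence cannot be deleted; a short inspection of the two types of paths then shows that the legal moves are exactly: delete the single vertex of a length-$1$ path, giving $S_{i-1,j}$ (possible iff $i-j\ge 1$); delete the leaf of a length-$2$ path, giving $S_{i,j-1}$ (possible iff $j\ge1$); delete both vertices of a length-$2$ path, giving $S_{i-1,j-1}$ (possible iff $j\ge1$). Deleting the inner vertex of a length-$2$ path, or the centre, would disconnect the star. Thus $\opt(S_{i,j})\subseteq\{S_{i-1,j},\,S_{i,j-1},\,S_{i-1,j-1}\}$ for $i\ge 3$, matching precisely the arrows of Figure~\ref{fig:tabpos}.

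It follows that for $i\ge 3$ one has
\[\grundy(S_{i,j})=\mex\{\grundy(S')\mid S'\in\opt(S_{i,j})\},\]
and since each option of $S_{i,j}$ lies in a strictly earlier row, or in the same row strictly to the left, the whole table is determined by induction on $i$ and, inside a row, on $j$ --- the reading order of Figure~\ref{fig:tabpos}, starting from $\grundy(\emptyset)=0$ and the base rows $i\le 2$. The proof of the theorem then amounts to writing down the closed form displayed in Figure~\ref{fig:tabgrun} and checking that it is consistent with this recurrence. The claimed pattern is eventually periodic both in $i$ (with a short transient for $i\le 3$) and, along each row, in $j$ (with short transients near $j=0$ and near the diagonal $j=i$, where one or two of the three options disappear), so the verification is a finite case analysis: check the rows $i\le 5$ directly from the recurrence, and then for $i\ge 6$ argue by induction on $i$, splitting on the parity of $i$ and on whether $j\in\{0,1\}$, $j\in\{2,3\}$, $4\le j\le i-2$, or $j\in\{i-1,i\}$, confronting in each regime the $\mex$ of the (at most three) relevant entries of Figure~\ref{fig:tabgrun} with the claimed entry.

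The only real work is this bookkeeping; each case is a one-line $\mex$ of at most three values, and Lemma~\ref{lem:grundyStars} already pins down the $S_{1,1,\ell}$ column as a consistency check. What makes the statement non-trivial --- and what must be handled carefully --- is that, in contrast with paths, Grundy values larger than~$2$ do occur (already $\grundy(S_{1,1,1,2})=3$), so one must genuinely track a four-valued eventually-periodic array rather than anything reducible modulo~$3$. The places most prone to error are the first genuine star row $i=3$ and the diagonal $j=i$, where the recurrence offers only two options instead of three; I would treat those, together with $i\in\{4,5\}$, as explicitly computed base cases before running the induction.
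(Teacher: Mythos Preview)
Your proposal is correct and follows essentially the same route as the paper. The paper itself does not give a formal proof of Theorem~\ref{thm:grunStars}: it merely observes that, once the option graph of Figure~\ref{fig:tabpos} is established, ``we can deduce the Grundy value of every star by proceeding inductively from the top lines'', and then records the resulting values in Figure~\ref{fig:tabgrun} together with the verbal description of the eventual row patterns $1203(12)^*$ and $03120(30)^*$. Your write-up is a careful elaboration of exactly that argument: you make explicit the two-parameter indexing $S_{i,j}$, you pin down the (at most three) options for $i\ge 3$ and the path identification $S_{i,j}\cong P_{1+i+j}$ for $i\le 2$, and you organise the verification as a finite case analysis on the parity of $i$ and on the position of $j$ in the row (left boundary, bulk, diagonal). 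None of this departs from the paper's intended proof; it simply supplies the bookkeeping the paper leaves to the reader.
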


Except for the four first rows, the rows corresponding to stars with an odd number of paths are of the form $1203(12)^*$ whereas the rows corresponding to stars with an even number of paths are of the form $03120(30)^*$. Moreover, except for the four first columns, the columns with an even number of paths of length 2 are of the form $(01)^*$ whereas the columns with an odd number of paths of length 2 are of the form $(23)^*$.

\begin{figure}[!h]
	\centering
	\begin{tikzpicture}
	\draw [->,thick] (-1,1.5) -- (-1,-8.5);
	\draw [->,thick] (-1,1.5) -- (8.5,1.5);
	
	\draw (3.75,2.4) node {Number of paths of length 2 in the subdivided star};
	\draw (-2.3,-3.5) node[rotate=90]  {Number of paths in the subdivided star};
	
	\draw (-1.2,0) node {0};
	\draw (-1.2,-1) node {1};
	\draw (-1.2,-2) node {2};
	\draw (-1.2,-3) node {3};
	\draw (-1.2,-4) node {4};
	\draw (-1.2,-5) node {5};
	\draw (-1.5,-6) node {\ldots};
	\draw (-1.5,-7) node {$2p$};
	\draw (-1.5,-8) node {$2p+1$};
	
	\draw (0,1.8) node {0};
	\draw (1,1.8) node {1};
	\draw (2,1.8) node {2};
	\draw (3,1.8) node {3};
	\draw (4,1.8) node {4};
	\draw (5,1.8) node {5};
	\draw (6,1.8) node {\ldots};
	\draw (7,1.8) node {$2p$};
	\draw (8,1.8) node {$2p+1$};
	
	\node (empty) at (0,1) {0};
	\node (p1) at (0,0) {1};
	
	\node (p2) at (0,-1) {2};
	\node (p3) at (1,-1) {0};
	
	\node (p3b) at (0,-2) {0};
	\node (p4) at (1,-2) {1};
	\node (p5) at (2,-2) {2};
	
	\node (s111) at (0,-3) {1};
	\node (s112) at (1,-3) {2};
	\node (s122) at (2,-3) {0};
	\node (s222) at (3,-3) {1};
	
	\node (s1111) at (0,-4) {0};
	\node (s1112) at (1,-4) {3};
	\node (s1122) at (2,-4) {1};
	\node (s1222) at (3,-4) {2};
	\node (s2222) at (4,-4) {0};
	
	\node (s11111) at (0,-5) {1};
	\node (s11112) at (1,-5) {2};
	\node (s11122) at (2,-5) {0};
	\node (s11222) at (3,-5) {3};
	\node (s12222) at (4,-5) {1};
	\node (s22222) at (5,-5) {2};
	
	\draw [->] (p1) to (empty);
	\draw [->] (p2) to[out=120, in=-120] (empty);
	\draw [->] (p3b) to[out=120, in=-120] (p1);
	\draw [->] (p2) to (p1);
	\draw [->] (p3) to (p1);
	\draw [->] (p3) to (p2);
	\draw [->] (p3b) to (p2);
	\draw [->] (p4) to (p3b);
	\draw [->] (p4) to (p2);
	\draw [->] (p4) to (p3);
	\draw [->] (p5) to (p3);
	\draw [->] (p5) to (p4);
	\draw [->] (s111) to (p3b);
	\draw [->] (s112) to (p3b);
	\draw [->] (s112) to (p4);
	\draw [->] (s112) to (s111);
	\draw [->] (s122) to (p5);
	\draw [->] (s122) to (p4);
	\draw [->] (s122) to (s112);
	\draw [->] (s222) to (p5);
	\draw [->] (s222) to (s122);
	\draw [->] (s1111) to (s111);
	\draw [->] (s1112) to (s111);
	\draw [->] (s1112) to (s112);
	\draw [->] (s1112) to (s1111);
	\draw [->] (s1122) to (s112);
	\draw [->] (s1122) to (s122);
	\draw [->] (s1122) to (s1112);
	\draw [->] (s1222) to (s122);
	\draw [->] (s1222) to (s222);
	\draw [->] (s1222) to (s1122);
	\draw [->] (s2222) to (s222);
	\draw [->] (s2222) to (s1222);
	\draw [->] (s11111) to (s1111);
	\draw [->] (s11112) to (s1111);
	\draw [->] (s11112) to (s1112);
	\draw [->] (s11112) to (s11111);
	\draw [->] (s11122) to (s1112);
	\draw [->] (s11122) to (s1122);
	\draw [->] (s11122) to (s11112);
	\draw [->] (s11222) to (s1122);
	\draw [->] (s11222) to (s1222);
	\draw [->] (s11222) to (s11122);
	\draw [->] (s12222) to (s1222);
	\draw [->] (s12222) to (s2222);
	\draw [->] (s12222) to (s11222);
	\draw [->] (s22222) to (s2222);
	\draw [->] (s22222) to (s12222);
	
	\draw (0,-7) node {$0$};
	\draw (1,-7) node {$3$};
	\draw (2,-7) node {$1$};
	\draw (3,-7) node {$2$};
	\draw (4,-7) node {$0$};
	\draw (5,-7) node {$3$};
	\draw (6,-7) node {\ldots};
	\draw (7,-7) node {$0$};
	
	\draw (0,-8) node {$1$};
	\draw (1,-8) node {$2$};
	\draw (2,-8) node {$0$};
	\draw (3,-8) node {$3$};
	\draw (4,-8) node {$1$};
	\draw (5,-8) node {$2$};
	\draw (6,-8) node {\ldots};
	\draw (7,-8) node {$1$};
	\draw (8,-8) node {$2$};
	
	\draw [<-] (0.2,-7) -- (0.8,-7);
	\draw [<-] (1.2,-7) -- (1.8,-7);
	\draw [<-] (2.2,-7) -- (2.8,-7);
	\draw [<-] (3.2,-7) -- (3.8,-7);
	\draw [<-] (4.2,-7) -- (4.8,-7);
	
	\draw [<-] (0.2,-8) -- (0.8,-8);
	\draw [<-] (1.2,-8) -- (1.8,-8);
	\draw [<-] (2.2,-8) -- (2.8,-8);
	\draw [<-] (3.2,-8) -- (3.8,-8);
	\draw [<-] (4.2,-8) -- (4.8,-8);
	\draw [<-] (7.2,-8) -- (7.8,-8);
	
	\draw [<-] (0,-7.2) -- (0,-7.8);
	\draw [<-] (1,-7.2) -- (1,-7.8);
	\draw [<-] (2,-7.2) -- (2,-7.8);
	\draw [<-] (3,-7.2) -- (3,-7.8);
	\draw [<-] (4,-7.2) -- (4,-7.8);
	\draw [<-] (5,-7.2) -- (5,-7.8);
	\draw [<-] (7,-7.2) -- (7,-7.8);
	
	\draw [<-] (0.2,-7.2) -- (0.8,-7.8);
	\draw [<-] (1.2,-7.2) -- (1.8,-7.8);
	\draw [<-] (2.2,-7.2) -- (2.8,-7.8);
	\draw [<-] (3.2,-7.2) -- (3.8,-7.8);
	\draw [<-] (4.2,-7.2) -- (4.8,-7.8);
	\draw [<-] (7.2,-7.2) -- (7.8,-7.8);
	\end{tikzpicture}
	\caption{First six rows, and rows $2p$ and $2p+1$, of the table of Grundy values for the subdivided stars.}
	\label{fig:tabgrun}
\end{figure}
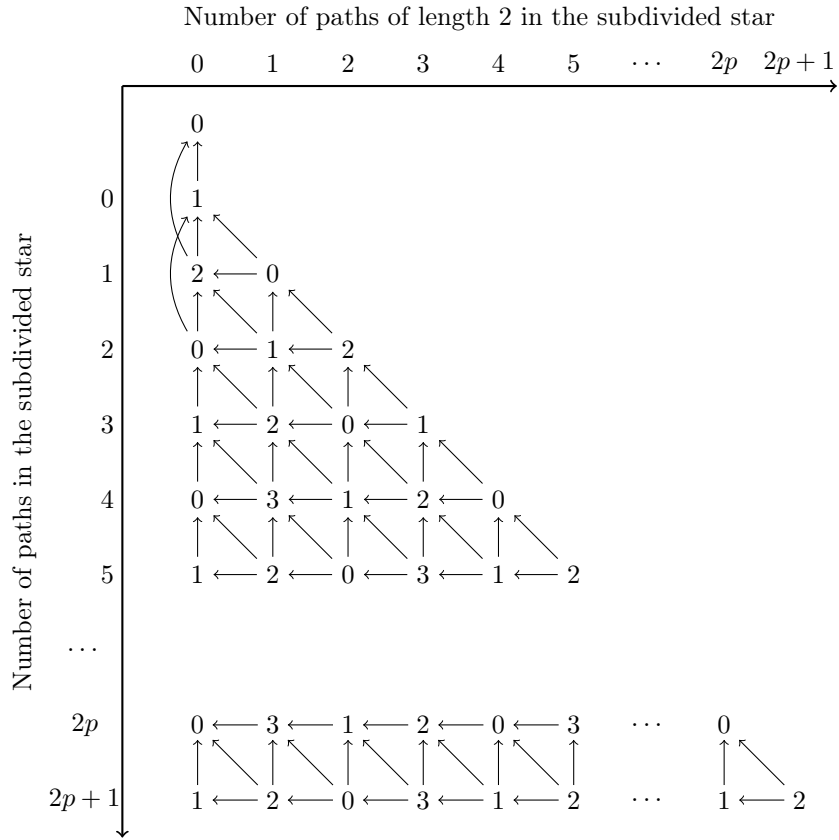

%This closes our study of the {\bf 0.33} game on subdivided stars.

%%%%%%% 0.33 SUR LES BI-ÉTOILES
\section{The game {\bf 0.33} on subdivided bistars}\label{sec:bistar}

Let $S$ and $S'$ be two subdivided stars. The subdivided bistar \sbstar{S}{m}{S'} is the graph obtained by joining the central vertices of $S$ and $S'$ by a path of $m$ edges.
If $m=0$, then the subdivided bistar is a subdivided star. Likewise, if $m \geq 1$, $S=$\sstar{\ell_1,\ldots,\ell_k} and $S'=\emptyset$, then the subdivided bistar \sbstar{S}{m}{S'} is the subdivided star \sstar{\ell_1,\ldots,\ell_k,m-1}.
Figure~\ref{fig:exbipod} shows an example of a subdivided bistar.

For the sake of convenience, we will denote \sbstar{S}{1}{S'} by $S \bipa S'$.

\begin{figure}[H]
\centering
\begin{tikzpicture}[scale=0.6]
	\node[noeud] (c1) at (0,0) {};
	\node[noeud] (a11) at (-1,1) {};
	\node[noeud] (a21) at (-1,0) {};
	\node[noeud] (a22) at (-2,0) {};
	\node[noeud] (a23) at (-3,0) {};
	\node[noeud] (a31) at (-1,-1) {};
	\node[noeud] (a32) at (-2,-1) {};
	\draw (c1) -- (a11);
	\draw (c1) -- (a21);
	\draw (a23) -- (a21);
	\draw (c1) -- (a31);
	\draw (a32) -- (a31);
	
	\node[noeud] (c2) at (2,0) {};
	\node[noeud] (b11) at (3,0) {};
	\node[noeud] (b21) at (3,1) {};
	\node[noeud] (b22) at (4,1) {};
	\node[noeud] (b23) at (5,1) {};
	\node[noeud] (b24) at (6,1) {};
	\node[noeud] (b31) at (3,-1) {};
	\node[noeud] (b32) at (4,-1) {};
	\node[noeud] (b33) at (5,-1) {};
	\draw (c2) -- (b11);
	\draw (c2) -- (b21);
	\draw (b24) -- (b21);
	\draw (c2) -- (b31);
	\draw (b33) -- (b31);
	
	\node[noeud] (m) at (1,0) {};
	\draw (c1) -- (m);
	\draw (m) -- (c2);
\end{tikzpicture}
\caption[sbstar]{The subdivided bistar \sstar{1,2,3}\joint{2}\sstar{1,3,4}.}
\label{fig:exbipod}
\end{figure}

We notice that playing the {\bf 0.33} game on a subdivided bistar is similar to playing the {\bf 0.33} game on the two subdivided stars composing it with an "adjustment" depending on the length of the path linking the two stars, except for some small cases where one of the stars can be emptied so that one can play on the middle path.

This section is divided in two parts. In the first part, we will prove that every path of length $\ell$ in a subdivided bistar can be reduced to a path of lenth $\ell \bmod 3$ without changing the Grundy value:

\begin{theorem}
	\label{thm:modbipodes}
	For all $\ell_1,\ldots,\ell_k,\ell'_{1},\ldots,\ell'_{k'},m$, we have:
		%$\grundy($ \sbstar{S_{\ell_1,\ldots,\ell_k}}{m}{S_{\ell'_{1},\ldots,\ell'_{k'}}} $) = \grundy($ \sbstar{S_{\ell_1 \bmod 3,\ldots,\ell_k \bmod 3}}{m \bmod 3}{S_{\ell'_{1} \bmod 3,\ldots,\ell'_{k'} \bmod 3}} $)$.
		\[
		\grundy(S_{\ell_1,\ldots,\ell_k}
		\begin{tikzpicture}[baseline=-4]\draw (0,0) node[circle,fill=black,minimum size=0,inner sep=1]{} -- (0.3,0) node[circle,fill=black, minimum size=0, inner sep=1] {} node[midway,above,scale=0.5] {$m$};\end{tikzpicture}
		S_{\ell'_{1},\ldots,\ell'_{k'}})
		=
		\grundy(S_{\ell_1 \bmod 3,\ldots,\ell_k \bmod 3}
		\begin{tikzpicture}[baseline=-4]\draw (0,0) node[circle,fill=black,minimum size=0,inner sep=1]{} -- (0.3,0) node[circle,fill=black, minimum size=0, inner sep=1] {} node[midway,above,scale=0.5] {$m\bmod3$};\end{tikzpicture}
		S_{\ell'_{1} \bmod 3,\ldots,\ell'_{k'} \bmod 3})
		\]
\end{theorem}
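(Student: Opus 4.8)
The plan is to reduce Theorem~\ref{thm:modbipodes} to the already-proven Theorem~\ref{thm:modkpodes} by showing that, just as for subdivided stars, attaching a $P_3$ to a subdivided bistar at a suitable place does not change the Grundy value. Concretely, it suffices to establish the following analogue of Lemma~\ref{lem:modkpodes}: for any subdivided bistar $B$, attaching a $P_3$ to any leaf of $B$, to either of its two central vertices, or inserting a $P_3$ into the linking path (i.e.\ replacing the linking path of $m$ edges by one of $m+3$ edges) leaves the Grundy value unchanged. Given this, any path of length $\ell$ — whether it is a pendant path of one of the two stars, or the central linking path — can be shortened by $3$ repeatedly until its length is $\ell \bmod 3$, and iterating over all paths yields the theorem. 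The case $m=0$ is exactly Theorem~\ref{thm:modkpodes}, and the case where one star is empty is subsumed by it as well (since then the bistar is a subdivided star), so the genuinely new content is the reduction of the linking path and of pendant paths when both stars are non-empty.

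The technique I would use is the same strategic-copying argument as in Lemma~\ref{lem:modkpodes}: letting $B'$ be $B$ with a $P_3$ attached as above, I would show $B + B'$ is a $\outcomeP$-position by exhibiting, for every first-player move, a second-player response leading to another $\outcomeP$-position, using induction on $|V(B)|$. The easy responses are: (i) if the first player plays inside the freshly attached $P_3$ of $B'$, the second player removes the complementary one or two vertices of that $P_3$, reaching $B+B$; (ii) if the first player makes a move on $B'$ that corresponds to a legal move on $B$ (same vertices, in the ``shared'' part), the second player mirrors it on $B$ (or vice versa), and the induction hypothesis applies since the $P_3$ is still attached in the same way to a smaller bistar; (iii) as in Observation~\ref{lem:keylemma}, whenever the first player removes a ``pivot'' vertex (a central vertex, say, in a configuration where one side is a short path), the second player can often answer by removing a symmetric vertex on the other copy, reducing to a previously handled configuration. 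The base cases are the small bistars in which one of the two stars is a path short enough to be emptied, so that the linking path effectively merges with that star — here one falls back on Proposition~\ref{prop:033pathsAndCycles} and Lemma~\ref{lem:grundyStars}, and on the already-established star table (Theorem~\ref{thm:grunStars}).

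The main obstacle, and the reason this is not a verbatim copy of the star proof, is the case analysis around the two central vertices and the linking path: when both stars are non-empty, removing a central vertex is never legal (it disconnects the graph), which simplifies things; but the delicate configurations are those in which one star has been reduced to a path $P_1$ or $P_2$ (so its ``center'' is a genuine path vertex that \emph{can} be removed), or in which the linking path is short ($m=1$ or $m=2$), so that a move near the junction can simultaneously be seen as a move on a star and as a move on the linking path. In these boundary cases the move on $B'$ may not have an obvious mirror on $B$, and I expect to need an explicit sub-lemma — parallel to Lemma~\ref{lem:grundyStars} — computing the Grundy values of the smallest bistars $P_a \joint{m} S$ with $a \in \{0,1,2\}$ and $m \in \{0,1,2\}$, which then seed the induction. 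Organizing this finite check cleanly (perhaps by first treating $m \ge 3$ and both stars non-trivial, where pure copying works, and then handling the short-linking-path and short-star cases separately) is where the real work lies; once that is in place, Theorem~\ref{thm:modbipodes} follows by the same telescoping argument as Theorem~\ref{thm:modkpodes}.
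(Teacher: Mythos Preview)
Your proposal is correct and follows the same overall strategy as the paper: show that attaching a $P_3$ (to a leaf, to a central vertex, or into the linking path) leaves the Grundy value unchanged, via the usual ``$B+B'$ is a $\outcomeP$-position'' mirroring argument, with special handling of the small boundary configurations.

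The paper organizes the argument slightly differently from what you sketch, and the difference is worth noting. Rather than treating all three attachment sites at once, the paper first reduces the linking path modulo~3 (Lemma~\ref{lem:modbipodeschain}): this case is clean because if both stars have at least two vertices the first player can never touch the linking path, so pure mirroring works, and otherwise the bistar is already a subdivided star and Lemma~\ref{lem:modkpodes} applies directly. Only then does the paper reduce the pendant paths (Lemma~\ref{lem:modbipodesarms}), and by that point it may assume $m\in\{1,2\}$. The boundary case you anticipated --- one star reduced to $P_2$ and the first player emptying it --- is exactly where mirroring fails; the paper handles it not by tabulating Grundy values of small bistars as you suggest, but via two structural equivalences valid for \emph{every} subdivided star $S$: $\grundy(S)=\grundy(S\bipa S_{1,1})$ (Lemma~\ref{lem:P3empty}) and $\grundy(S\bipa)=\grundy(S\tripa S_{1,1})$ (Lemma~\ref{lem:P3onevertex}). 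These are the precise sub-lemmas that make the induction close, and they are a bit sharper than the finite check you propose (which would not suffice, since the other star $S$ remains arbitrary in the problematic case).
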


In the second part, we compute the Grundy value of a subdivided bistar, depending on the Grundy values of each of its two subdivided stars.

\subsection{Reducing the paths of a subdivided bistar}

In this section, we prove Theorem~\ref{thm:modbipodes}. We begin by proving the result for the middle path, before proving it for the paths of the two subdivided stars composing the bistar.

Note that we allow the length of the middle path to be 0, in which case the subdivided bistar is simply a subdivided star. Thus, if a subdivided bistar has a middle path of $3k$ edges, then it can be reduced to a subdivided star without changing its Grundy value.

\begin{lemma}
\label{lem:modbipodeschain}
	For all $\ell_1,\ldots,\ell_k,\ell'_{1},\ldots,\ell'_{k'},m$, we have:
	%$\grundy($ \sbstar{S_{\ell_1,\ldots,\ell_k}}{m}{S_{\ell'_{1},\ldots,\ell'_{k'}}} $) = \grundy($ \sbstar{S_{\ell_1 \bmod 3,\ldots,\ell_k \bmod 3}}{m \bmod 3}{S_{\ell'_{1} \bmod 3,\ldots,\ell'_{k'} \bmod 3}} $)$.
		\[
		\grundy(S_{\ell_1,\ldots,\ell_k}
		\begin{tikzpicture}[baseline=-4]\draw (0,0) node[circle,fill=black,minimum size=0,inner sep=1]{} -- (0.3,0) node[circle,fill=black, minimum size=0, inner sep=1] {} node[midway,above,scale=0.5] {$m$};\end{tikzpicture}
		S_{\ell'_{1},\ldots,\ell'_{k'}})
		=
		\grundy(S_{\ell_1,\ldots,\ell_k}
		\begin{tikzpicture}[baseline=-4]\draw (0,0) node[circle,fill=black,minimum size=0,inner sep=1]{} -- (0.3,0) node[circle,fill=black, minimum size=0, inner sep=1] {} node[midway,above,scale=0.5] {$m\bmod3$};\end{tikzpicture}
		S_{\ell'_{1},\ldots,\ell'_{k'}})
		\]
\end{lemma}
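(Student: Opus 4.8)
The plan is to mimic the strategy used for subdivided stars (Lemma~\ref{lem:modbipodes}), reducing everything to a single claim: attaching a $P_3$ to the middle path of a subdivided bistar does not change its Grundy value. Concretely, if $B$ denotes the bistar $S_{\ell_1,\ldots,\ell_k}\,\joint{m}\,S_{\ell'_1,\ldots,\ell'_{k'}}$ and $B^+$ denotes $S_{\ell_1,\ldots,\ell_k}\,\joint{m+3}\,S_{\ell'_1,\ldots,\ell'_{k'}}$, it suffices to show $\grundy(B)=\grundy(B^+)$; iterating this gives the statement (with the understanding that when $m$ drops below a small threshold the bistar degenerates into a subdivided star, where Theorem~\ref{thm:modkpodes} already applies, and the degenerate cases $S'=\emptyset$ are covered by the same star result). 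As in the star case, by the Sprague--Grundy machinery it is enough to prove that $B+B^+$ is a $\outcomeP$-position, which I would do by exhibiting a pairing strategy for the second player and proving it correct by induction on $|V(B)|$ (equivalently on $|V(B)|+|V(B^+)|$).

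First I would set up the strategy. Picture the three extra vertices of $B^+$ as an internal $P_3$ sitting inside its middle path. The second player's default response is the symmetry/replication strategy: whenever the first player makes a move on one copy that is not "inside" the distinguished $P_3$ of $B^+$, the second player replays the structurally identical move on the other copy, returning to a sum of the form $B'+B'^+$ (or $B'^++B'$) with $|V(B')|<|V(B)|$, to which the induction hypothesis applies. When the first player plays inside the distinguished $P_3$ of $B^+$ — removing one of its vertices or two adjacent ones, which is legal precisely because $P_3$ lies in the interior of a path of at least $m+3\ge 3$ edges and so cannot disconnect anything — the second player answers inside that same $P_3$ with the complementary move (one vertex versus two), collapsing the $P_3$ entirely and leaving $B+B$, a $\outcomeP$-position by Theorem~\ref{thm:grundysum}. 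I would use Observation~\ref{lem:keylemma} to handle moves near the centers of degenerate stars (paths with one or two attached paths), exactly as in Lemma~\ref{lem:modbipodes}: a move removing a vertex the second player "cannot mirror" has an equivalent move that it can.

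The delicate part — and the place where this lemma genuinely differs from its star analogue — is the collection of small cases flagged in the paragraph preceding Theorem~\ref{thm:modbipodes}: when one of the two subdivided stars is empty or tiny enough that a player can delete it wholesale and thereby "open up" the middle path, so that the middle path stops behaving like a rigid connector and starts behaving like a free path on which arbitrary $P_3$-style reductions are available. In those configurations the bistar is really just a path or a subdivided star, and the clean replication argument above either does not apply or needs the star results as a base case. So the main obstacle is a careful enumeration of these boundary situations: $m\in\{0,1,2\}$ combined with $S$ or $S'$ empty, a single leaf, or a single $P_2$, plus the symmetric cases. For each I would either invoke Proposition~\ref{prop:033pathsAndCycles}, Theorem~\ref{thm:modkpodes}/Theorem~\ref{thm:grunStars}, or Lemma~\ref{lem:grundyStars} directly, or verify by hand that removing the small star and the subsequent free play on the middle path does not let the first player escape the pairing. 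Once those base cases are pinned down, the inductive step is routine and the lemma follows; Theorem~\ref{thm:modbipodes} will then be finished in the next subsection by combining this middle-path reduction with the analogous (and easier, since they are handled by the star lemmas applied locally at each center) reduction of the paths $\ell_i$ and $\ell'_j$.
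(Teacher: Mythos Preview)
Your overall framework is right---show $B+B^{+}$ is a $\outcomeP$-position by a replication strategy and induction on $|V(B)|$---and this is exactly the paper's approach. But there is a genuine error in your middle paragraph: you claim that removing a vertex from the distinguished $P_3$ sitting inside the middle path of $B^{+}$ ``cannot disconnect anything'' because it lies in the interior of a longer path. This is backwards. In the game \textbf{0.33} every interior vertex of the middle path is a cut vertex, so removing it (alone or as part of a pair) is \emph{illegal} whenever both stars are nonempty. The ``complementary move inside the $P_3$'' response you describe is therefore a response to a move that can never be played; the justification you give is a misreading of the rules. You yourself say in the next paragraph that the middle path behaves like ``a rigid connector''---the two paragraphs contradict each other.

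Once this is corrected, the proof is much shorter than you anticipate and your ``delicate'' base-case enumeration essentially disappears. The paper's observation is simply: a player can touch the middle path at all only when one of the two stars is empty, a single vertex, or a $P_2$---and in each of those situations both $B$ and $B^{+}$ are already subdivided \emph{stars}, so Lemma~\ref{lem:modkpodes} applies directly and there is nothing further to check. In every other configuration, any legal move lies entirely inside $S$ or inside $S'$, the second player replicates it on the other summand, and induction finishes. There is no need to enumerate small values of $m$, no need for Observation~\ref{lem:keylemma} here, and no ``free play on the middle path'' scenario to analyse.
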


\begin{proof}
It is enough to prove that adding three edges to the path does not change the Grundy value of the subdivided bistar.

Let $S$ and $S'$ be two subdivided stars. Let $B=$\sbstar{S}{m}{S'} and $B'=$\sbstar{S}{m+3}{S'}, $m \geq 0$.
We show that $B + B'$ is a $\outcomeP$-position by proving that for every first player's move, the second player always has an answer leading to a $\outcomeP$-position. We use induction on the size of $B$.

The first player can play on the middle path if and only if one of the two stars is either empty or reduced to a single vertex or a $P_2$. In this case, $B$ and $B'$ are subdivided stars, and the result follows from Lemma~\ref{lem:modkpodes}.

% I really like strawberries. Don't you?

Assume now that both $S$ and $S'$ have at least two vertices. Hence, the first player is unable to play on the middle path and can play either on $S$ or $S'$. The second player will replicate the same move on the other subdivided bistar. By induction hypothesis, the result follows.
\end{proof}

In order to prove that the paths of the stars can be reduced, we need a few technical lemmas.

\begin{lemma}
\label{lem:P3empty}
Let $S$ be a subdivided star, and $B=S$\bipa\sstar{1,1}. We have $\grundy(S)=\grundy(B)$.
\end{lemma}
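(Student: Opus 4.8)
The statement to prove is that for any subdivided star $S$, the subdivided bistar $B = S\bipa\sstar{1,1}$ has the same Grundy value as $S$. Following the template used throughout this section, the plan is to show that $S + B$ is a $\outcomeP$-position, i.e. that the second player can always answer any first-player move with a move leading to a $\outcomeP$-position, arguing by induction on $|V(S)|$. Recall that $\sstar{1,1}$ is isomorphic to $P_3$, so $B$ is obtained from $S$ by attaching a new vertex $v$ to the center of $S$ (via one edge), and then attaching a $P_2$ (two vertices forming an edge) to $v$; equivalently $v$ becomes a degree-$3$ vertex with two pendant edges and a path back to the center of $S$. The key structural fact is that the middle vertex $v$ has degree $3$ as long as $S$ is nonempty, so neither $v$ nor its incident pairs can be removed without disconnecting $B$ unless $S$ has collapsed to something tiny.

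First I would handle the base / degenerate cases where the first player can actually act on the "new" part of $B$ or on a very small $S$: if $S$ is empty then $B = P_3$ and $\grundy(S)=\grundy(B)$ is immediate; if $S = P_1$ then $B = \sstar{1,1,1}$, which has Grundy value $1 = \grundy(P_1)$ by Lemma~\ref{lem:grundyStars} (or by the table of Theorem~\ref{thm:grunStars}); similarly $S=P_2$ gives $B=\sstar{1,1,2}$ with Grundy value $2$ by Lemma~\ref{lem:grundyStars}, and $S=P_3$ gives $B=\sstar{1,1,3}$ with Grundy value $0$ again by Lemma~\ref{lem:grundyStars} — in each of these last cases one checks $\grundy(S)=\grundy(B)$ directly. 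For the inductive step, assume $S$ has at least, say, four vertices, or more precisely enough that the center of $S$ cannot be removed and the middle vertex $v$ of $B$ has degree $3$. Then in $B$ the only legal moves are moves entirely inside the copy of $S$ (the "star part" of $B$), and in $S$ the legal moves are the ordinary moves; a first-player move on either component is replicated by the second player on the other, and the induction hypothesis applied to the resulting star $\tilde S$ (with $\tilde S + (\tilde S\bipa\sstar{1,1})$) finishes the case. One must also treat a first-player move that removes one of the two pendant vertices hanging off $v$ or the pendant edge, turning $B$ into a subdivided star of the form $S\bipa P_2$ reduced, i.e. into $\sstar{\ell_1,\dots,\ell_k,1}$ or $\sstar{\ell_1,\dots,\ell_k,2}$ attached appropriately — here the second player should respond with the complementary move (remove two vertices if the opponent removed one, or vice versa) so as to collapse that appendage and land back in $S+S$, a $\outcomeP$-position; the remaining subcase is when the opponent's move on the $\sstar{1,1}$ part cannot be complemented because it already emptied it, which only happens for small $S$ and is covered by the base cases.

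The main obstacle I anticipate is the careful bookkeeping of the "boundary" cases: precisely when $S$ is small enough that the middle vertex $v$ of $B$ does \emph{not} have degree $3$, or when a move inside $S$ reduces $S$ to $P_1$, $P_2$, or the empty graph and thereby makes new moves available near $v$ in $B$ that have no mirror image in $S$. These are exactly the cases where the clean "replicate on the other component" strategy breaks, and they need to be enumerated and checked by hand against Lemma~\ref{lem:grundyStars} and the small-star Grundy values; Observation~\ref{lem:keylemma} (symmetry of moves on a path with $n\ge 4$) will likely be invoked to pair up otherwise-unmatched moves, just as in the proof of Lemma~\ref{lem:modkpodes}. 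Once those finitely many small configurations are verified, the induction goes through routinely.
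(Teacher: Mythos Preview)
Your approach is essentially identical to the paper's: induction on $|V(S)|$, handling small $S$ directly (via Lemma~\ref{lem:grundyStars} and Observation~\ref{lem:keylemma}), replicating moves on $S$, and answering a move on the attached $\sstar{1,1}$ by the complementary move so as to land in $S+S$. Two small corrections worth making: first, $\sstar{1,1}$ attaches to the center of $S$ as a new vertex $v$ with \emph{two pendant leaves} (not a pendant $P_2$), so the only legal move on that piece is removing a single leaf, which the second player answers by removing the other leaf together with $v$; second, when $S=P_3$ you must also treat the case $S=\sstar{1,1}$ (center at the middle vertex), where $B=\sstar{1,1}\bipa\sstar{1,1}$ is a genuine bistar rather than $\sstar{1,1,3}$, and this must be checked by hand to be a $\outcomeP$-position --- the paper does exactly this.
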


\begin{proof}
We show that $S + B$ is a $\outcomeP$-position by proving that for every first player's move, the second player always has an answer leading to a $\outcomeP$-position. We use induction on the size of $S$.

The cases where $S$ is empty or the first player can remove its central vertex are:
\begin{itemize}
\item $S$ is empty, thus $B = P_3$, which is a $\outcomeP$-position;
\item $S$ is a single vertex, thus $B=$\sstar{1,1,1}. We know by Lemma~\ref{lem:grundyStars} that $\grundy(B)=1=\grundy(S)$;
\item $S=P_2$, thus $B=$\sstar{1,1,2}. Considering Figure~\ref{fig:tabgrun}, we get $\grundy(B)=2=\grundy(S)$;
\item $S=P_3$, and in that case, either $S=$\sstar{1,1} or $S=S_2$:
	\begin{enumerate}
		\item $B=$\sstar{1,1}\bipa\sstar{1,1}. $B$ is a $\outcomeP$-position: the first player has only one available move, and from the resulting graph the second player can play to $P_3$ which is a $\outcomeP$-position. Both $B$ and $S$ being $\outcomeP$-positions, we have $\grundy(B)=\grundy(S)$.
		\item $B=$\sstar{1,1,3}. By Theorem~\ref{thm:modkpodes}, $\grundy(S)=\grundy(B)$.
	\end{enumerate}
\item $S=S_{\ell}$ with $\ell \geq 4$ or $S=S_{1,\ell}$ with $\ell \geq 2$. By Observation~\ref{lem:keylemma}, the second player will always be able to replicate the first player's move on $B$, by playing the symmetrical move. By induction hypothesis, the new position is a $\outcomeP$-position.
\end{itemize}

Figure~\ref{fig:P3emptyPROOF} depicts the cases where the first player does not take the central vertex of $S$, and completes the proof.

\begin{figure}[H]
\centering
\begin{tikzpicture}
	\node (orig) at (0,0) {\fbox{
	\begin{tikzpicture}[scale=0.5]
		\node[noeud] at (0,0) {};
	
		\draw (-0.6,0) node[scale=0.75] {$S$};
	
		\draw (0,0) -- (-0.75,0.5);
		\draw (0,0) -- (-0.75,-0.5);
		\draw (-0.75,0.5) arc (135:225:0.7);
		
		\draw (0.6,0) node {+};
	
		\node[noeud] at (2,0) {};
		\node[noeud] at (3,0) {};
		\node[noeud] at (4,1) {};
		\node[noeud] at (4,-1) {};
	
		\draw (1.4,0) node[scale=0.75] {$S$};
	
		\draw (2,0) -- (1.25,0.5);
		\draw (2,0) -- (1.25,-0.5);
		\draw (1.25,0.5) arc (135:225:0.7);
	
		\draw (2,0) -- (3,0);
		\draw (3,0) to (4,1);
		\draw (3,0) to (4,-1);
	\end{tikzpicture}}
	};
	
	\node (playOnS1) at (4,0) {
	\begin{tikzpicture}[scale=0.5]
		\node[noeud] at (0,0) {};
	
		\draw (-0.6,0) node[scale=0.75] {$S'$};
	
		\draw (0,0) -- (-0.75,0.5);
		\draw (0,0) -- (-0.75,-0.5);
		\draw (-0.75,0.5) arc (135:225:0.7);
		
		\draw (0.6,0) node {+};
	
		\node[noeud] at (2,0) {};
		\node[noeud] at (3,0) {};
		\node[noeud] at (4,1) {};
		\node[noeud] at (4,-1) {};
	
		\draw (1.4,0) node[scale=0.75] {$S$};
	
		\draw (2,0) -- (1.25,0.5);
		\draw (2,0) -- (1.25,-0.5);
		\draw (1.25,0.5) arc (135:225:0.7);
	
		\draw (2,0) -- (3,0);
		\draw (3,0) to (4,1);
		\draw (3,0) to (4,-1);
	\end{tikzpicture}
	};
	\node (playOnS2) at (4,-1.5) {
	\begin{tikzpicture}[scale=0.5]
		\node[noeud] at (0,0) {};
	
		\draw (-0.6,0) node[scale=0.75] {$S$};
	
		\draw (0,0) -- (-0.75,0.5);
		\draw (0,0) -- (-0.75,-0.5);
		\draw (-0.75,0.5) arc (135:225:0.7);
		
		\draw (0.6,0) node {+};
	
		\node[noeud] at (2,0) {};
		\node[noeud] at (3,0) {};
		\node[noeud] at (4,1) {};
		\node[noeud] at (4,-1) {};
	
		\draw (1.4,0) node[scale=0.75] {$S'$};
	
		\draw (2,0) -- (1.25,0.5);
		\draw (2,0) -- (1.25,-0.5);
		\draw (1.25,0.5) arc (135:225:0.7);
	
		\draw (2,0) -- (3,0);
		\draw (3,0) to (4,1);
		\draw (3,0) to (4,-1);
	\end{tikzpicture}
	};
	\node (playOnS3) at (8,-0.75) {
	\begin{tikzpicture}[scale=0.5]
		\node[noeud] at (0,0) {};
	
		\draw (-0.6,0) node[scale=0.75] {$S'$};
	
		\draw (0,0) -- (-0.75,0.5);
		\draw (0,0) -- (-0.75,-0.5);
		\draw (-0.75,0.5) arc (135:225:0.7);
		
		\draw (0.6,0) node {+};
	
		\node[noeud] at (2,0) {};
		\node[noeud] at (3,0) {};
		\node[noeud] at (4,1) {};
		\node[noeud] at (4,-1) {};
	
		\draw (1.4,0) node[scale=0.75] {$S'$};
	
		\draw (2,0) -- (1.25,0.5);
		\draw (2,0) -- (1.25,-0.5);
		\draw (1.25,0.5) arc (135:225:0.7);
	
		\draw (2,0) -- (3,0);
		\draw (3,0) to (4,1);
		\draw (3,0) to (4,-1);
	\end{tikzpicture}
	};
	\draw (11,-0.75) node {$\outcomeP$ by induction};
	\draw (11,-1.25) node {hypothesis};
	
	\node (bli) at (4,-3) {
	\begin{tikzpicture}[scale=0.5]
		\node[noeud] at (0,0) {};
	
		\draw (-0.6,0) node[scale=0.75] {$S$};
	
		\draw (0,0) -- (-0.75,0.5);
		\draw (0,0) -- (-0.75,-0.5);
		\draw (-0.75,0.5) arc (135:225:0.7);
		
		\draw (0.6,0) node {+};
	
		\node[noeud] at (2,0) {};
		\node[noeud] at (3,0) {};
		\node[noeud] at (4,1) {};
	
		\draw (1.4,0) node[scale=0.75] {$S$};
	
		\draw (2,0) -- (1.25,0.5);
		\draw (2,0) -- (1.25,-0.5);
		\draw (1.25,0.5) arc (135:225:0.7);
	
		\draw (2,0) -- (3,0);
		\draw (3,0) to (4,1);
	\end{tikzpicture}
	};
	\node (blir) at (8,-3) {
	\begin{tikzpicture}[scale=0.5]
		\node[noeud] at (0,0) {};
	
		\draw (-0.6,0) node[scale=0.75] {$S$};
	
		\draw (0,0) -- (-0.75,0.5);
		\draw (0,0) -- (-0.75,-0.5);
		\draw (-0.75,0.5) arc (135:225:0.7);
		
		\draw (0.6,0) node {+};
	
		\node[noeud] at (2,0) {};
	
		\draw (1.4,0) node[scale=0.75] {$S$};
	
		\draw (2,0) -- (1.25,0.5);
		\draw (2,0) -- (1.25,-0.5);
		\draw (1.25,0.5) arc (135:225:0.7);
	\end{tikzpicture}
	};
	\draw (11,-3) node {$\outcomeP$};
	
	\draw (orig) -- (2,0);
	\draw[->] (2,0) -- (playOnS1);
	\draw[->] (2,0) |- (playOnS2);
	\draw[->] (playOnS1) -- (playOnS3);
	\draw[->] (playOnS2) -- (playOnS3);
	\draw[->] (2,0) |- (bli);
	\draw[->] (bli) -- (blir);
\end{tikzpicture}
\caption{The inductive part of the proof of Lemma~\ref{lem:P3empty}}
\label{fig:P3emptyPROOF}
\end{figure}
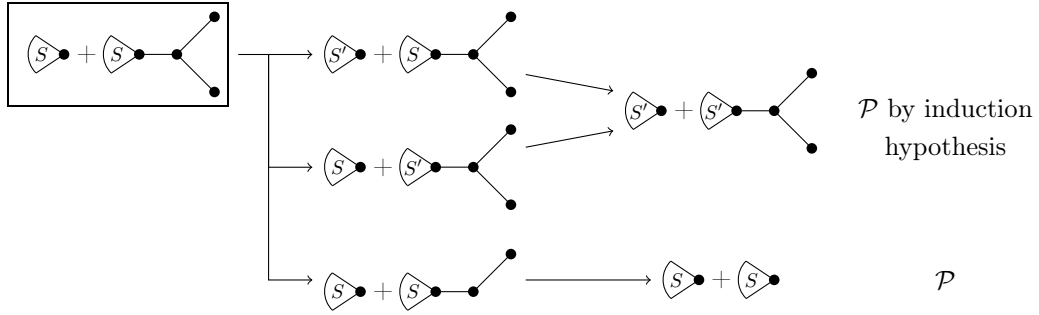
\end{proof}

Let $S$ be a subdivided star, we denote \sbstar{S}{2}{$\emptyset$} by $S\bipa$. We then have:

\begin{lemma}
\label{lem:P3onevertex}
Let $S$ be a subidvided star. We have $\grundy(S\bipa)=\grundy(S$\tripa\sstar{1,1}$)$.
\end{lemma}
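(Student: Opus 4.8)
The plan is to prove the equivalence by showing that the disjoint sum $S\bipa + (S\,\tripa\,\sstar{1,1})$ is a $\outcomeP$-position, using induction on $|V(S)|$ together with a strategy-stealing/pairing argument analogous to the proofs of Lemmas~\ref{lem:modkpodes} and~\ref{lem:P3empty}. Here $S\bipa$ is the subdivided star obtained from $S$ by attaching a path of $2$ edges (i.e.\ a $P_2$) to its center, and $S\,\tripa\,\sstar{1,1}$ is the subdivided bistar in which $S$ and the two-legged star $\sstar{1,1}$ are joined by a path of $2$ edges. Intuitively both positions are ``$S$ plus a contractible $P_3$-like gadget hanging off the center,'' so they should have the same Grundy value; the point of the lemma (as the later use in proving Theorem~\ref{thm:modbipodes} will make clear) is that a branch of length $2$ at the center behaves the same whether it is a bare leg or a leg terminating in an $\sstar{1,1}$ pattern — in both cases removing two vertices there empties the gadget down to a single extra vertex.

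First I would dispose of the base cases, namely when $S$ is empty or when the first player can remove the central vertex of $S$ (so $S\in\{\emptyset,P_1,P_2,P_3\}$, with $P_3$ occurring as either $\sstar{1,1}$ or $S_2$). In each of these finitely many cases both $S\bipa$ and $S\,\tripa\,\sstar{1,1}$ are small subdivided stars or bistars whose Grundy values are already computed: one appeals to Proposition~\ref{prop:033pathsAndCycles}, Lemma~\ref{lem:grundyStars}, Figure~\ref{fig:tabgrun}, and/or Lemma~\ref{lem:P3empty} to check equality of Grundy values directly, which makes the sum a $\outcomeP$-position. When $S=S_\ell$ with $\ell\geq 4$ or $S=S_{1,\ell}$ with $\ell\geq 2$ — the cases where the center is not removable but $S$ is still essentially a path with a distinguished interior vertex — I would invoke Observation~\ref{lem:keylemma}: any first-player move in one component that touches the center has a symmetric counterpart not touching it, so the second player can mirror on the other component and apply the induction hypothesis.

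For the main inductive step, assume $S$ has at least two vertices and is not a path, so the central vertex of $S$ is not removable in either summand. I would split on where the first player moves. (i) If the first player plays inside the ``$P_3$ gadget'' of $S\bipa$ (the attached $P_2$ together with the center is locally a $P_3$) — removing one or two vertices there — the second player responds in the gadget of $S\,\tripa\,\sstar{1,1}$ so as to leave the symmetric residual structure; the key observation is that after the dust settles both gadgets reduce to the same thing (a single pendant vertex at the center, or nothing, depending on the move), leaving $S'+S'$ for some common subdivided star $S'$, hence a $\outcomeP$-position. Symmetrically if the first player plays in the gadget of $S\,\tripa\,\sstar{1,1}$. (ii) If the first player plays elsewhere — i.e.\ inside the copy of $S$ in either component — the second player replicates exactly that move on the copy of $S$ in the other component, reducing both components' $S$-parts identically and invoking the induction hypothesis.

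The main obstacle I anticipate is the case analysis in part (i): one must verify that every legal first-player move in the length-$2$ branch of $S\bipa$ really does have a response in the branch of $S\,\tripa\,\sstar{1,1}$ producing two isomorphic positions, and conversely — in particular handling the asymmetry that $S\,\tripa\,\sstar{1,1}$ has \emph{more} vertices in its gadget, so a move that empties the short gadget of $S\bipa$ must be matched by a move that leaves the longer gadget in an equivalent (already-analyzed, via Lemma~\ref{lem:P3empty} or Lemma~\ref{lem:grundyStars}) state rather than an isomorphic one. There is also a subtlety when one of the components collapses enough that a move on the central path of the bistar becomes available; this should be absorbed into the base cases (where both objects are subdivided stars and Lemma~\ref{lem:modkpodes} applies), but it must be checked that these are exactly the situations already covered above. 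A figure depicting the gadget responses, in the style of Figure~\ref{fig:P3emptyPROOF}, would make this step transparent.
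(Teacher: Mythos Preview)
Your proposal is correct and follows essentially the same route as the paper's proof: induction on $|V(S)|$, explicit verification of the small base cases ($S\in\{\emptyset,P_1,P_2,P_3\}$, including $S=\sstar{1,1}$), the use of Observation~\ref{lem:keylemma} when $S$ is a path-like star, and in the inductive step a replication argument on the $S$-part together with a pairing response on the two ``gadgets'' that appeals to Lemma~\ref{lem:modkpodes} rather than landing on literally isomorphic positions. One minor correction of notation: by the paper's convention $S\bipa=\sbstar{S}{2}{\emptyset}=\sstar{\ell_1,\ldots,\ell_k,1}$, so the gadget on the $S\bipa$ side is a \emph{single} pendant leaf (not a $P_2$ of two extra vertices); this only simplifies your case (i), since the sole non-$S$ move in $S\bipa$ is to delete that leaf, and the matching response in $S\tripa\sstar{1,1}$ is to delete one leaf of the $\sstar{1,1}$, leaving $S+(S$ with an attached $P_3)$, which is $\outcomeP$ by Lemma~\ref{lem:modkpodes}.
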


\begin{proof}
Let $B=S$\tripa\sstar{1,1}.	
We show that $S\bipa + B$ is a $\outcomeP$-position by proving that for every first player's move, the second player always has an answer leading to a $\outcomeP$-position. We use induction on the size of $S$.

The cases where $S$ is empty, or where the first player can remove either the central vertex of $S$ or both the central vertex of $S$ and the vertex from the middle path of $S\bipa$ are:
\begin{itemize}
\item $S$ is empty, thus $S\bipa = P_1$ and $B=$\sstar{1,1,1}. We know by Lemma~\ref{lem:grundyStars} that $\grundy(B)=1=\grundy(S\bipa)$ so $S\bipa+B$ is a $\outcomeP$-position;
\item $S$ is a single vertex, thus $S\bipa = P_2$ and $B=$\sstar{1,1,2}. Considering Figure~\ref{fig:tabgrun}, we get $\grundy(B)=2=\grundy(S\bipa)$ so $S\bipa+B$ is a $\outcomeP$-position;
\item $S=P_2$, thus $S \bipa = P_3$ and $B=$\sstar{1,1,3} which by Lemma~\ref{lem:modkpodes} has the same Grundy value as \sstar{1,1}, \emph{i.e.} as $P_3$. Thus, $\grundy(B)=\grundy(S\bipa)$ so $S\bipa+B$ is a $\outcomeP$-position;
\item $S\bipa=$\sstar{1,1,1}, thus $B=$\sbstar{\sstar{1,1}}{2}{\sstar{1,1}}. Considering the table in Figure~\ref{fig:tabgrun}, we get $\grundy(S\bipa)=1$. It is easy to see that $\grundy(B)=1$, since only one move is available for the first player (removing one leaf vertex), which leaves \sstar{1,1,3} which is a $\outcomeP$-position. Thus $\grundy(S\bipa)=\grundy(B)$ so $S\bipa+B$ is a $\outcomeP$-position.
\item $S=S_{\ell}$ with $\ell \geq 4$ or $S=S_{1,\ell}$ with $\ell \geq 2$. By Observation~\ref{lem:keylemma}, the second player will always be able to replicate the first player's move on $B$, by playing the symmetrical move. By induction hypothesis, the new position is a $\outcomeP$-position.
\end{itemize}

Figure~\ref{fig:P3onevertexPROOF} depicts the cases where the first player takes neither the central vertex of $S$ nor both the central vertex of $S$ and the vertex from the middle path of $S\bipa$, and completes the proof.

\begin{figure}[H]
\centering
\begin{tikzpicture}
	\node (orig) at (0,0) {\fbox{
	\begin{tikzpicture}[scale=0.5]
		\node[noeud] at (-1,0) {};
		\node[noeud] at (0,0) {};
	
		\draw (-1.6,0) node[scale=0.75] {$S$};
	
		\draw (-1,0) -- (0,0);
		\draw (-1,0) -- (-1.75,0.5);
		\draw (-1,0) -- (-1.75,-0.5);
		\draw (-1.75,0.5) arc (135:225:0.7);
	
		\draw (0.6,0) node {+};
	
		\node[noeud] at (2,0) {};
		\node[noeud] at (3,0) {};
		\node[noeud] at (4,0) {};
		\node[noeud] at (5,1) {};
		\node[noeud] at (5,-1) {};
	
		\draw (1.4,0) node[scale=0.75] {$S$};
	
		\draw (2,0) -- (1.25,0.5);
		\draw (2,0) -- (1.25,-0.5);
		\draw (1.25,0.5) arc (135:225:0.7);
	
		\draw (2,0) -- (4,0);
		\draw (4,0) to (5,1);
		\draw (4,0) to (5,-1);
	\end{tikzpicture}}
	};
	\node (playOnS1) at (5,0) {
	\begin{tikzpicture}[scale=0.5]
		\node[noeud] at (-1,0) {};
		\node[noeud] at (0,0) {};
	
		\draw (-1.6,0) node[scale=0.75] {$S'$};
	
		\draw (-1,0) -- (0,0);
		\draw (-1,0) -- (-1.75,0.5);
		\draw (-1,0) -- (-1.75,-0.5);
		\draw (-1.75,0.5) arc (135:225:0.7);
	
		\draw (0.6,0) node {+};
	
		\node[noeud] at (2,0) {};
		\node[noeud] at (3,0) {};
		\node[noeud] at (4,0) {};
		\node[noeud] at (5,1) {};
		\node[noeud] at (5,-1) {};
	
		\draw (1.4,0) node[scale=0.75] {$S$};
	
		\draw (2,0) -- (1.25,0.5);
		\draw (2,0) -- (1.25,-0.5);
		\draw (1.25,0.5) arc (135:225:0.7);
	
		\draw (2,0) -- (4,0);
		\draw (4,0) to (5,1);
		\draw (4,0) to (5,-1);
	\end{tikzpicture}
	};
	\node (playOnS2) at (5,-1.5) {
	\begin{tikzpicture}[scale=0.5]
		\node[noeud] at (-1,0) {};
		\node[noeud] at (0,0) {};
	
		\draw (-1.6,0) node[scale=0.75] {$S$};
	
		\draw (-1,0) -- (0,0);
		\draw (-1,0) -- (-1.75,0.5);
		\draw (-1,0) -- (-1.75,-0.5);
		\draw (-1.75,0.5) arc (135:225:0.7);
	
		\draw (0.6,0) node {+};
	
		\node[noeud] at (2,0) {};
		\node[noeud] at (3,0) {};
		\node[noeud] at (4,0) {};
		\node[noeud] at (5,1) {};
		\node[noeud] at (5,-1) {};
	
		\draw (1.4,0) node[scale=0.75] {$S'$};
	
		\draw (2,0) -- (1.25,0.5);
		\draw (2,0) -- (1.25,-0.5);
		\draw (1.25,0.5) arc (135:225:0.7);
	
		\draw (2,0) -- (4,0);
		\draw (4,0) to (5,1);
		\draw (4,0) to (5,-1);
	\end{tikzpicture}
	};
	\node (playOnS3) at (10,-0.75) {
	\begin{tikzpicture}[scale=0.5]
		\node[noeud] at (-1,0) {};
		\node[noeud] at (0,0) {};
	
		\draw (-1.6,0) node[scale=0.75] {$S'$};
	
		\draw (-1,0) -- (0,0);
		\draw (-1,0) -- (-1.75,0.5);
		\draw (-1,0) -- (-1.75,-0.5);
		\draw (-1.75,0.5) arc (135:225:0.7);
	
		\draw (0.6,0) node {+};
	
		\node[noeud] at (2,0) {};
		\node[noeud] at (3,0) {};
		\node[noeud] at (4,0) {};
		\node[noeud] at (5,1) {};
		\node[noeud] at (5,-1) {};
	
		\draw (1.4,0) node[scale=0.75] {$S'$};
	
		\draw (2,0) -- (1.25,0.5);
		\draw (2,0) -- (1.25,-0.5);
		\draw (1.25,0.5) arc (135:225:0.7);
	
		\draw (2,0) -- (4,0);
		\draw (4,0) to (5,1);
		\draw (4,0) to (5,-1);
	\end{tikzpicture}
	};
	\draw (14,-0.75) node {$\outcomeP$ by induction};
	\draw (14,-1.25) node {hypothesis};
	
	\node (abli) at (5,-3) {
	\begin{tikzpicture}[scale=0.5]
		\node[noeud] at (-1,0) {};
	
		\draw (-1.6,0) node[scale=0.75] {$S$};
	
		\draw (-1,0) -- (-1.75,0.5);
		\draw (-1,0) -- (-1.75,-0.5);
		\draw (-1.75,0.5) arc (135:225:0.7);
	
		\draw (0,0) node {+};
	
		\node[noeud] at (2,0) {};
		\node[noeud] at (3,0) {};
		\node[noeud] at (4,0) {};
		\node[noeud] at (5,1) {};
		\node[noeud] at (5,-1) {};
	
		\draw (1.4,0) node[scale=0.75] {$S$};
	
		\draw (2,0) -- (1.25,0.5);
		\draw (2,0) -- (1.25,-0.5);
		\draw (1.25,0.5) arc (135:225:0.7);
	
		\draw (2,0) -- (4,0);
		\draw (4,0) to (5,1);
		\draw (4,0) to (5,-1);
	\end{tikzpicture}
	};
	\node (bbli) at (5,-4.5) {
	\begin{tikzpicture}[scale=0.5]
		\node[noeud] at (-1,0) {};
		\node[noeud] at (0,0) {};
	
		\draw (-1.6,0) node[scale=0.75] {$S$};
	
		\draw (-1,0) -- (0,0);
		\draw (-1,0) -- (-1.75,0.5);
		\draw (-1,0) -- (-1.75,-0.5);
		\draw (-1.75,0.5) arc (135:225:0.7);
	
		\draw (0.6,0) node {+};
	
		\node[noeud] at (2,0) {};
		\node[noeud] at (3,0) {};
		\node[noeud] at (4,0) {};
		\node[noeud] at (5,1) {};
		\node at (5,-1) {};
	
		\draw (1.4,0) node[scale=0.75] {$S$};
	
		\draw (2,0) -- (1.25,0.5);
		\draw (2,0) -- (1.25,-0.5);
		\draw (1.25,0.5) arc (135:225:0.7);
	
		\draw (2,0) -- (4,0);
		\draw (4,0) to (5,1);
	\end{tikzpicture}
	};
	\node (cbli) at (10,-3.75) {
	\begin{tikzpicture}[scale=0.5]
		\node[noeud] at (-1,0) {};
	
		\draw (-1.6,0) node[scale=0.75] {$S$};
	
		\draw (-1,0) -- (-1.75,0.5);
		\draw (-1,0) -- (-1.75,-0.5);
		\draw (-1.75,0.5) arc (135:225:0.7);
	
		\draw (0,0) node {+};
	
		\node[noeud] at (2,0) {};
		\node[noeud] at (3,0) {};
		\node[noeud] at (4,0) {};
		\node[noeud] at (5,1) {};
		\node at (5,-1) {};
	
		\draw (1.4,0) node[scale=0.75] {$S$};
	
		\draw (2,0) -- (1.25,0.5);
		\draw (2,0) -- (1.25,-0.5);
		\draw (1.25,0.5) arc (135:225:0.7);
	
		\draw (2,0) -- (4,0);
		\draw (4,0) to (5,1);
	\end{tikzpicture}
	};
	\draw (14,-3.5) node {$\outcomeP$ by};
	\draw (14,-4) node {Lemma~\ref{lem:modkpodes}};
	
	\draw (orig) -- (2.5,0);
	\draw[->] (2.5,0) -- (playOnS1);
	\draw[->] (2.5,0) |- (playOnS2);
	\draw[->] (playOnS1) -- (playOnS3);
	\draw[->] (playOnS2) -- (playOnS3);
	\draw[->] (2.5,0) |- (abli);
	\draw[->] (2.5,0) |- (bbli);
	\draw[->] (abli) -- (cbli);
	\draw[->] (bbli) -- (cbli);
\end{tikzpicture}
\caption{The inductive part of the proof of Lemma~\ref{lem:P3onevertex}.}
\label{fig:P3onevertexPROOF}
\end{figure}
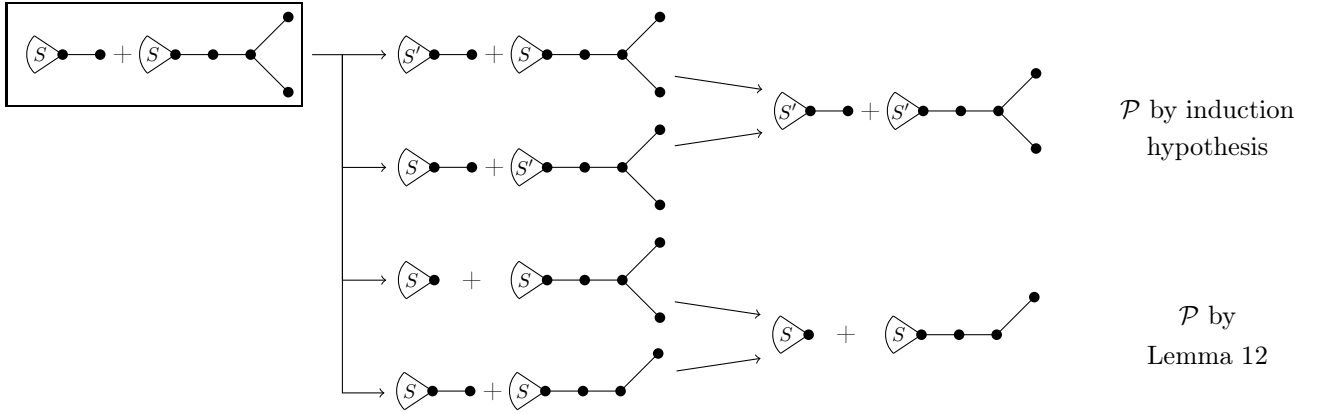
\end{proof}

We are now ready to prove that the paths of the two subdivided stars of a bistar can be reduced:

\begin{lemma}
\label{lem:modbipodesarms}
	For all $\ell_1,\ldots,\ell_k,\ell'_{1},\ldots,\ell'_{k'},m$, we have:
	%$\grundy($ \sbstar{S_{\ell_1,\ldots,\ell_k}}{m}{S_{\ell'_{1},\ldots,\ell'_{k'}}} $) = \grundy($ \sbstar{S_{\ell_1 \bmod 3,\ldots,\ell_k \bmod 3}}{m \bmod 3}{S_{\ell'_{1} \bmod 3,\ldots,\ell'_{k'} \bmod 3}} $)$.
		\[
		\grundy(S_{\ell_1,\ldots,\ell_k}
		\begin{tikzpicture}[baseline=-4]\draw (0,0) node[circle,fill=black,minimum size=0,inner sep=1]{} -- (0.3,0) node[circle,fill=black, minimum size=0, inner sep=1] {} node[midway,above,scale=0.5] {$m$};\end{tikzpicture}
		S_{\ell'_{1},\ldots,\ell'_{k'}})
		=
		\grundy(S_{\ell_1 \bmod 3,\ldots,\ell_k \bmod 3}
		\begin{tikzpicture}[baseline=-4]\draw (0,0) node[circle,fill=black,minimum size=0,inner sep=1]{} -- (0.3,0) node[circle,fill=black, minimum size=0, inner sep=1] {} node[midway,above,scale=0.5] {$m$};\end{tikzpicture}
		S_{\ell'_{1} \bmod 3,\ldots,\ell'_{k'} \bmod 3})
		\]
\end{lemma}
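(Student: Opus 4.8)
The plan is to mimic the proof of Theorem~\ref{thm:modkpodes}: it suffices to show that attaching a $P_3$ to a leaf or to the central vertex of either of the two subdivided stars composing a subdivided bistar does not change its Grundy value. Iterating this reduction, together with Lemma~\ref{lem:modbipodeschain} which already handles the middle path, then yields the statement. So fix a subdivided bistar $B$ with stars $S$ (with centre $c$) and $S'$, and let $B^+$ be obtained from $B$ by attaching a $P_3$ to a leaf or to the centre of one of these two stars; by the left--right symmetry of a bistar we may assume the $P_3$ is attached on the $S$ side. We must show $\grundy(B)=\grundy(B^+)$, equivalently that $B+B^+$ is a $\outcomeP$-position.

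We argue by induction on $|V(B)|$, exhibiting a winning strategy for the second player in $B+B^+$. There are three kinds of first-player moves. \textbf{(1)} If the first player plays inside the attached $P_3$ of $B^+$ --- necessarily removing its far leaf or its two far vertices, since the attaching vertex may not be disconnected --- the second player removes the remaining two (resp.\ one) vertices of that $P_3$; this restores $B$, so the position becomes $B+B$, a $\outcomeP$-position. \textbf{(2)} If the first player plays in $B^+$ outside the attached $P_3$, then (the attaching vertex not being removable in $B^+$) the resulting position is $B_1+B_1^+$, where $B_1$ is the corresponding option of $B$ and $B_1^+$ is $B_1$ with that same $P_3$ still attached; the second player replays the move in the copy $B$ and invokes the induction hypothesis. \textbf{(3)} If the first player plays in the copy $B$, the second player wants to replay the move in $B^+$; this works verbatim unless the move removes the attaching vertex, which is a leaf of $S$ or its centre $c$ and hence has strictly larger degree in $B^+$.

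The heart of the argument is case~(3) together with the base cases. When the attaching vertex is a leaf $v$ at the end of an arm of length $\ell$, a move of $B$ removing $v$ shortens that arm by one; the second player removes the far leaf of the corresponding arm of $B^+$ (which has length $\ell+3$), shortening it by one as well and leaving exactly $B_1+B_1^+$, so induction applies --- and removing the last two vertices of that arm in $B$ is answered symmetrically. When the attaching vertex is the centre $c$, a move of $B$ removing $c$ is legal only when $S$ is so small that $c$ has degree at most one in $B$; and, more generally, whenever the relevant component of $B$ is a path on at least four vertices, Observation~\ref{lem:keylemma} lets the second player copy the move by playing its mirror image. What remains is a bounded list of small configurations --- $S$ empty, $P_1$, $P_2$ or $P_3$, possibly with one of the two stars collapsing so that the middle edge becomes playable --- which are dispatched directly using Lemma~\ref{lem:grundyStars}, Theorem~\ref{thm:modkpodes}, Lemma~\ref{lem:modbipodeschain}, Lemmas~\ref{lem:P3empty} and~\ref{lem:P3onevertex}, and the table of Figure~\ref{fig:tabgrun}.

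I expect the main difficulty to be exactly the bookkeeping of these small cases --- in particular distinguishing whether attaching the $P_3$ lengthens an existing arm or creates a new one, and tracking when one side collapses to a path so that the middle edge becomes available --- since the generic mirroring argument is routine. Once the claim is proved for a single attached $P_3$, applying it repeatedly reduces every arm of either star modulo $3$, which is the assertion of the lemma.
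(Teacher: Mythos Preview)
Your proposal is correct and follows essentially the same route as the paper: reduce to showing that attaching a single $P_3$ to a leaf or centre of one of the two stars leaves the Grundy value unchanged, prove this by induction on $|V(B)|$ via a replication strategy in $B+B^+$, complete any partial removal of the attached $P_3$, and fall back on Lemmas~\ref{lem:modkpodes}, \ref{lem:P3empty} and~\ref{lem:P3onevertex} for the small degenerate cases. The only cosmetic difference is that the paper first invokes Lemma~\ref{lem:modbipodeschain} to restrict to $m\in\{1,2\}$ before running the induction, which is exactly what makes those two lemmas suffice for the base cases; you note this role of Lemma~\ref{lem:modbipodeschain} without making the restriction explicit, but the effect is the same.
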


\begin{proof}
Thanks to Lemma~\ref{lem:modbipodeschain}, we only have to prove the result on the subdivided bistars with a middle path of length 1 or 2.

Let $S$ and $S'$ be two subdivided stars, $B=$\sbstar{S}{i}{S'} with $i \in \{1,2\}$, and $B'$ the subdivided bistar obtained by attaching a $P_3$ to a leaf or to the central vertex of (without loss of generality) $S'$. We prove by induction on $|V(B)|$ that $\grundy(B)=\grundy(B')$.

First, we consider the cases where $S'$ is empty or the first player can remove its central vertex:
\begin{itemize}
\item If either $S$ or $S'$ is empty (resp. a single vertex), then $B$ is a subdivided star, and the result holds by Lemma~\ref{lem:modkpodes};
\item If $B=S$\bipa$P_2$ (resp. $B=S$\tripa$P_2$) and the first player empties $S'$ on $B$, then the second player is unable to replicate the move on $B'$. The strategy is then to take two vertices from the attached $P_3$. By Lemma~\ref{lem:P3empty} (resp. Lemma~\ref{lem:P3onevertex}), we have $\grundy(B)=\grundy(B')$.
\end{itemize}

Now, we consider the cases where the first player cannot take the central vertex of $S'$:
\begin{itemize}
	\item If $S'$ is a path with more than two vertices, and the $P_3$ is attached to its central vertex, then replicating the first player's move will always be possible and lead to a $\outcomeP$-position by induction hypothesis.
	\item If the first player takes one vertex (resp. two vertices) from the attached $P_3$ on $B'$, then the second player answers by taking two vertices (resp. one vertex) from it, leaving $B + B$ which is a $\outcomeP$-position.
	\item Otherwise, the first player plays either on $S$ or on $S'$ in either of the two bistars. Note that the first player cannot remove the central vertex of $S'$, since this case has already been treated above. The second player answers by replicating his move on the other bistar, which is always a legal move, allowing us to invoke the induction hypothesis.
\end{itemize}
\end{proof}

Theorem~\ref{thm:modbipodes} then follows from Lemmas~\ref{lem:modbipodeschain} and~\ref{lem:modbipodesarms}. As in the subdivided stars section, we are left with a limited number of bistars to study. The next subsection presents the study of the Grundy value of a subdivided bistar depending on the Grundy values of its subdivided stars.

\subsection{Computing the Grundy value of a subdivided bistar}

We will express the Grundy value of a subdivided star as a function of the Grundy values of its two stars.

By Lemma~\ref{lem:modbipodeschain}, it is enough to consider bistars whose middle path has length either 1 or 2. We consider these two cases separately.

\subsubsection{When the middle path is of length 1}

Playing on a subdivided bistar with a middle path of length 1 is almost equivalent to playing in the disjoint union of the two subdivided stars, except for small cases when some moves are not available in the bistar. We will see in what follows that except for some small cases, the Grundy value of the bistar is indeed the nim-sum of the Grundy values of the two stars.

We refine the equivalence relation $\equiv$ for subdivided stars as follows. Let $S$ and $S'$ be two subdivided stars. We say that $S$ and $S'$ are $\sim_1$-equivalent, denoted $S \sim_1 S'$, if and only if for any subdivided star $\hat{S}$, $S\bipa \hat{S} \equiv S' \bipa \hat{S}$.

Note that the Grundy value of a bistar $S\bipa S'$ only depends of the equivalence class under $\sim_1$ of $S$ and $S'$. The equivalence $\sim_1$ is a refinement of $\equiv$ since taking $\hat{S}=\emptyset$ we have $S\equiv S'$.

By Lemma~\ref{lem:P3empty}, we already know that $P_3\sim_1\emptyset$, and thus $S_2 \sim_1 \emptyset$ and \sstar{1,1}$\sim_1\emptyset$.

We will prove that there are actually eight equivalence classes for $\sim_1$:
\begin{itemize}
\item $\conestar=\{P_1,$\sstar{2,1},\sstar{2,2,2}$\}$ (these stars have Grundy value 1);
\item $\ctwostar=\{P_2$,\sstar{2,2}$\}$ (these stars have Grundy value 2);
\item $\ctwobox$: subdivided stars $S$ such that $\grundy(S)=2$ and $S$ contains one or three paths of length $2$;
\item $\cthreebox$: subdivided stars $S$ such that $\grundy(S)=3$ and S contains one or three paths of length $2$;
\item For $i\in \{0,1,2,3\}$, $\mathcal C_i$: subdivided stars $S$ with $\grundy(S)=i$ and $S$ is not in a previous class.
\end{itemize}
Figure~\ref{fig:tabEquivSim1} shows the equivalence classes of the subdivided stars.

\begin{figure}[!h]
	\centering
	\begin{tikzpicture}
	\draw [->,thick] (-1,1.5) -- (-1,-8.5);
	\draw [->,thick] (-1,1.5) -- (8.5,1.5);
	
	\draw (3.75,2.4) node {Number of paths of length 2 in the subdivided star};
	\draw (-2.3,-3.5) node[rotate=90]  {Number of paths in the subdivided star};
	
	\draw (-1.2,0) node {0};
	\draw (-1.2,-1) node {1};
	\draw (-1.2,-2) node {2};
	\draw (-1.2,-3) node {3};
	\draw (-1.2,-4) node {4};
	\draw (-1.2,-5) node {5};
	\draw (-1.5,-6) node {\ldots};
	\draw (-1.5,-7) node {$2p$};
	\draw (-1.5,-8) node {$2p+1$};
	
	\draw (0,1.8) node {0};
	\draw (1,1.8) node {1};
	\draw (2,1.8) node {2};
	\draw (3,1.8) node {3};
	\draw (4,1.8) node {4};
	\draw (5,1.8) node {5};
	\draw (6,1.8) node {\ldots};
	\draw (7,1.8) node {$2p$};
	\draw (8,1.8) node {$2p+1$};
	
	\node (empty) at (0,1) {0};
	\node (p1) at (0,0) {$1^*$};
	
	\node (p2) at (0,-1) {$2^*$};
	\node (p3) at (1,-1) {0};
	
	\node (p3b) at (0,-2) {0};
	\node (p4) at (1,-2) {$1^*$};
	\node (p5) at (2,-2) {$2^*$};
	
	\node (s111) at (0,-3) {1};
	\node (s112) at (1,-3) {$2^\Box$};
	\node (s122) at (2,-3) {0};
	\node (s222) at (3,-3) {$1^*$};
	
	\node (s1111) at (0,-4) {0};
	\node (s1112) at (1,-4) {$3^\Box$};
	\node (s1122) at (2,-4) {1};
	\node (s1222) at (3,-4) {$2^\Box$};
	\node (s2222) at (4,-4) {0};
	
	\node (s11111) at (0,-5) {1};
	\node (s11112) at (1,-5) {$2^\Box$};
	\node (s11122) at (2,-5) {0};
	\node (s11222) at (3,-5) {$3^\Box$};
	\node (s12222) at (4,-5) {1};
	\node (s22222) at (5,-5) {2};
	
	\draw [->] (p1) to (empty);
	\draw [->] (p2) to[out=120, in=-120] (empty);
	\draw [->] (p3b) to[out=120, in=-120] (p1);
	\draw [->] (p2) to (p1);
	\draw [->] (p3) to (p1);
	\draw [->] (p3) to (p2);
	\draw [->] (p3b) to (p2);
	\draw [->] (p4) to (p3b);
	\draw [->] (p4) to (p2);
	\draw [->] (p4) to (p3);
	\draw [->] (p5) to (p3);
	\draw [->] (p5) to (p4);
	\draw [->] (s111) to (p3b);
	\draw [->] (s112) to (p3b);
	\draw [->] (s112) to (p4);
	\draw [->] (s112) to (s111);
	\draw [->] (s122) to (p5);
	\draw [->] (s122) to (p4);
	\draw [->] (s122) to (s112);
	\draw [->] (s222) to (p5);
	\draw [->] (s222) to (s122);
	\draw [->] (s1111) to (s111);
	\draw [->] (s1112) to (s111);
	\draw [->] (s1112) to (s112);
	\draw [->] (s1112) to (s1111);
	\draw [->] (s1122) to (s112);
	\draw [->] (s1122) to (s122);
	\draw [->] (s1122) to (s1112);
	\draw [->] (s1222) to (s122);
	\draw [->] (s1222) to (s222);
	\draw [->] (s1222) to (s1122);
	\draw [->] (s2222) to (s222);
	\draw [->] (s2222) to (s1222);
	\draw [->] (s11111) to (s1111);
	\draw [->] (s11112) to (s1111);
	\draw [->] (s11112) to (s1112);
	\draw [->] (s11112) to (s11111);
	\draw [->] (s11122) to (s1112);
	\draw [->] (s11122) to (s1122);
	\draw [->] (s11122) to (s11112);
	\draw [->] (s11222) to (s1122);
	\draw [->] (s11222) to (s1222);
	\draw [->] (s11222) to (s11122);
	\draw [->] (s12222) to (s1222);
	\draw [->] (s12222) to (s2222);
	\draw [->] (s12222) to (s11222);
	\draw [->] (s22222) to (s2222);
	\draw [->] (s22222) to (s12222);
	
	\draw (0,-7) node {$0$};
	\draw (1,-7) node {$3^\Box$};
	\draw (2,-7) node {$1$};
	\draw (3,-7) node {$2^\Box$};
	\draw (4,-7) node {$0$};
	\draw (5,-7) node {$3$};
	\draw (6,-7) node {\ldots};
	\draw (7,-7) node {$0$};
	
	\draw (0,-8) node {$1$};
	\draw (1,-8) node {$2^\Box$};
	\draw (2,-8) node {$0$};
	\draw (3,-8) node {$3^\Box$};
	\draw (4,-8) node {$1$};
	\draw (5,-8) node {$2$};
	\draw (6,-8) node {\ldots};
	\draw (7,-8) node {$1$};
	\draw (8,-8) node {$2$};
	
	\draw [<-] (0.2,-7) -- (0.8,-7);
	\draw [<-] (1.2,-7) -- (1.8,-7);
	\draw [<-] (2.2,-7) -- (2.8,-7);
	\draw [<-] (3.2,-7) -- (3.8,-7);
	\draw [<-] (4.2,-7) -- (4.8,-7);
	
	\draw [<-] (0.2,-8) -- (0.8,-8);
	\draw [<-] (1.2,-8) -- (1.8,-8);
	\draw [<-] (2.2,-8) -- (2.8,-8);
	\draw [<-] (3.2,-8) -- (3.8,-8);
	\draw [<-] (4.2,-8) -- (4.8,-8);
	\draw [<-] (7.2,-8) -- (7.8,-8);
	
	\draw [<-] (0,-7.2) -- (0,-7.8);
	\draw [<-] (1,-7.2) -- (1,-7.8);
	\draw [<-] (2,-7.2) -- (2,-7.8);
	\draw [<-] (3,-7.2) -- (3,-7.8);
	\draw [<-] (4,-7.2) -- (4,-7.8);
	\draw [<-] (5,-7.2) -- (5,-7.8);
	\draw [<-] (7,-7.2) -- (7,-7.8);
	
	\draw [<-] (0.2,-7.2) -- (0.8,-7.8);
	\draw [<-] (1.2,-7.2) -- (1.8,-7.8);
	\draw [<-] (2.2,-7.2) -- (2.8,-7.8);
	\draw [<-] (3.2,-7.2) -- (3.8,-7.8);
	\draw [<-] (4.2,-7.2) -- (4.8,-7.8);
	\draw [<-] (7.2,-7.2) -- (7.8,-7.8);
	\end{tikzpicture}
	\caption{First six rows, and rows $2p$ and $2p+1$, of the table of equivalence classes for $\sim_1$ of the subdivided stars. Stars belonging to resp. $\conestar$, $\ctwostar$, $\ctwobox$, $\cthreebox$ are depicted by resp. $1^*$, $2^*$, $2^\Box$, $3^\Box$, while the $\mathcal C_i$'s are depicted by $i$.}
	\label{fig:tabEquivSim1}
\end{figure}

\begin{theorem}\label{thm:equiv1}
The equivalence classes for $\sim_1$ are exactly the sets $\mathcal C_0$, $\mathcal C_1$, $\conestar$, $\mathcal C_2$, $\ctwostar$, $\ctwobox$,  $\mathcal C_3$ and $\cthreebox$. Moreover, Table~\ref{tab:prod1} describes how the Grundy value of $S\bipa S'$ can be computed depending on the equivalence class of $S$ and $S'$.
\end{theorem}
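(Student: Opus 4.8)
The plan is to mirror the strategy of Section~\ref{sec:star}: first collapse everything to a finite amount of data, then run an induction that compares two bistars move by move, the inductive step being driven by a pairing (mirroring) strategy. \emph{Reduction step.} I would begin by using Theorems~\ref{thm:modkpodes} and~\ref{thm:modbipodes} to assume that every star under consideration has all of its paths of length $1$ or $2$: replacing a star $S$ by its reduction $S^{\mathrm{red}}$ inside any bistar $S\bipa\hat S$ preserves the Grundy value by Theorem~\ref{thm:modbipodes}, hence $S\sim_1 S^{\mathrm{red}}$, and the eight sets are stable under this reduction. Using the values of Figure~\ref{fig:tabgrun} (and the labels in Figure~\ref{fig:tabEquivSim1}) one checks that $\mathcal C_0,\mathcal C_1,\conestar,\mathcal C_2,\ctwostar,\ctwobox,\mathcal C_3,\cthreebox$ do partition the reduced stars. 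It then remains to prove: (i) any two stars in the same set are $\sim_1$-equivalent; (ii) no two of the eight sets can be merged; and, along the way, (iii) every entry of Table~\ref{tab:prod1}.

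The key structural observation is that in $S\bipa\hat S$ the two central vertices are adjacent, so each move is of exactly one of three types: a move entirely inside $S$ that does not disconnect it from $\hat S$; the symmetric possibility inside $\hat S$; or a \emph{central} move touching one or both central vertices. A central move removing the central vertex of $S$ is legal only when $S$ has at most one path, i.e.\ only when $S$ is $\sim_1$-equivalent to a small star ($\emptyset$, $P_1$, $P_2$, $P_3$, $S_{1,1}$, $S_2$, $S_{2,2}$,\,\ldots), and symmetrically for $\hat S$; Observation~\ref{lem:keylemma} further lets us disregard the removal of the central vertex of a long single path. Consequently, when neither $S$ nor $\hat S$ is small, every move in $S\bipa\hat S$ matches (up to Observation~\ref{lem:keylemma}) a move in the disjoint sum $S+\hat S$ and conversely, so a pairing strategy shows $S\bipa\hat S + S + \hat S$ is a $\outcomeP$-position, whence $\grundy(S\bipa\hat S)=\grundy(S)\oplus\grundy(\hat S)$. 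Pushing this through an induction on $|V(S)|+|V(\hat S)|$ yields claim (i) for the four generic classes $\mathcal C_0,\dots,\mathcal C_3$ and simultaneously the non-exceptional entries $i\oplus j$ of Table~\ref{tab:prod1}.

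For the starred and boxed classes the central moves genuinely matter, and I would treat $\conestar$, $\ctwostar$, $\ctwobox$ and $\cthreebox$ by a similar induction, using Lemmas~\ref{lem:P3empty} and~\ref{lem:P3onevertex} to absorb any $P_3$ (equivalently, any length-$2$ path, since $S_2\cong P_3\equiv\emptyset$) dangling off a star, and reading off the small bistars from Figure~\ref{fig:tabgrun} for the base cases. To show $S\sim_1 S'$ for $S,S'$ in the same starred/boxed class one proves $S\bipa\hat S + S'\bipa\hat S$ is a $\outcomeP$-position: the second player copies a move made inside $\hat S$, answers a move inside $S$ (resp.\ $S'$) by the induction hypothesis or a symmetric move, and answers a central move emptying $S$ or $S'$ either by the corresponding central move on the other bistar or by invoking Lemma~\ref{lem:P3empty}/\ref{lem:P3onevertex}. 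The same analysis produces the exceptional entries of Table~\ref{tab:prod1}, which arise precisely when one of the two stars is starred or boxed. Finally, for (ii), the Grundy values $0,1,2,3$ already separate $\mathcal C_0,\mathcal C_1,\mathcal C_2,\mathcal C_3$ (and forbid any merge across distinct values), and for the remaining pairs (such as $\conestar$ vs $\mathcal C_1$, $\ctwostar$ vs $\mathcal C_2$ vs $\ctwobox$, and $\cthreebox$ vs $\mathcal C_3$) one exhibits a single witness $\hat S$ — typically $\hat S=P_1$ or $\hat S=S_{1,1}$ — on which the two sets take different Grundy values, a difference already recorded in Table~\ref{tab:prod1}. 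Together with the reduction step this shows the eight sets are exactly the $\sim_1$-classes and that Table~\ref{tab:prod1} holds.

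The hard part will be the bookkeeping in the last two steps: one must pin down precisely which central moves are available in $S\bipa\hat S$ for each shape of $S$ and $\hat S$, and verify that the second player's reply is always legal and lands in a $\outcomeP$-position. The starred and boxed classes exist exactly because these central moves behave differently for small stars and for stars carrying an odd number of length-$2$ paths, so making the base cases of the induction consistent with the exceptional entries of Table~\ref{tab:prod1} is where the real delicacy lies.
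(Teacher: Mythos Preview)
Your reduction step and the plan to separate classes by witnesses are fine, but the core argument for the generic classes has a genuine gap. The pairing strategy on $S\bipa\hat S + S + \hat S$ does \emph{not} establish that this sum is a $\outcomeP$-position: mirroring only wins if, at every position reached during play, the bistar component still has Grundy value equal to the nim-sum of the two star components. But options of a generic star can land in $\conestar$ or $\ctwostar$. Concretely, take $S=S_{2,2,2,2}\in\mathcal C_0$ and $\hat S=S_{1,2,2}\in\mathcal C_0$. If the first player removes a length-$2$ path from the $S$-side of the bistar and then (after your mirror) removes the length-$1$ path from the standalone $\hat S$, your mirroring replies reach $S_{2,2,2}\bipa S_{2,2}+S_{2,2,2}+S_{2,2}$. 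Here $S_{2,2,2}\in\conestar$ and $S_{2,2}\in\ctwostar$, so by Table~\ref{tab:prod1} the bistar has Grundy value $0$, while the two standalone stars have values $1$ and $2$; the total is $0\oplus1\oplus2=3\neq0$, an $\outcomeN$-position with the first player to move. So mirroring loses. The same obstruction blocks any attempt to push a nim-sum induction on $|V(S)|+|V(\hat S)|$ through the generic classes \emph{before} handling the starred ones: the induction step inevitably needs the starred-class values of the options.

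The paper sidesteps this by never comparing a bistar to a disjoint sum of stars. Instead it fixes $\hat S$ and shows directly that $S\bipa\hat S + S'\bipa\hat S$ is a $\outcomeP$-position whenever $S,S'$ lie in the same candidate class, by a second-player strategy that is \emph{not} a mirror: after the first player moves $S\to T$, the reply is either a further move $T\to T'$ with $T'$ back in the class of $S$, or a move $S'\to T'$ with $T'$ in the class of $T$. The one residual case, when $T$ falls into $\conestar$ but $S'$ has no option there, is exactly what Lemma~\ref{lem:onestarone} (namely $S_{1,1,1}\bipa\hat S\equiv P_1\bipa\hat S$ for $\hat S\notin\conestar\cup\ctwostar$) is designed to handle; your proposal has no analogue of this lemma. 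Similarly, the internal equivalences inside $\conestar$ and $\ctwostar$ require the explicit case analysis of Lemma~\ref{lem:equivstar} (e.g.\ $P_1\sim_1 S_{2,1}$), which does not follow from Lemmas~\ref{lem:P3empty} and~\ref{lem:P3onevertex} alone. In short, the high-level architecture of your plan is right, but the engine driving the induction for the generic classes needs to be replaced by the paper's two-case reply strategy together with Lemma~\ref{lem:onestarone}.
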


\begin{table}[H]
\begin{center}
\begin{math}
\begin{array}{c|c|c|c|c|c|c|c|c}
&\mathcal C_0 & \mathcal C_1 & \conestar & \mathcal C_2 & \ctwostar & \ctwobox & \mathcal C_3 & \cthreebox \\ \hline
\mathcal C_0 & \nimsum &  \nimsum & \nimsum & \nimsum & \nimsum & \nimsum & \nimsum & \nimsum \\ \hline
\mathcal C_1 & \nimsum &  \nimsum & \nimsum & \nimsum & \nimsum & \nimsum & \nimsum & \nimsum \\ \hline
\conestar & \nimsum &  \nimsum & 2 & \nimsum & 0 & \nimsum & \nimsum & \nimsum \\ \hline
\mathcal C_2 & \nimsum &  \nimsum & \nimsum & \nimsum & \nimsum & \nimsum & \nimsum & \nimsum \\ \hline
\ctwostar & \nimsum &  \nimsum & 0 & \nimsum & 1 & 1 & \nimsum & 0 \\ \hline
\ctwobox & \nimsum &  \nimsum & \nimsum & \nimsum & 1 & \nimsum & \nimsum & \nimsum \\ \hline
\mathcal C_3 & \nimsum &  \nimsum & \nimsum & \nimsum & \nimsum & \nimsum & \nimsum & \nimsum \\ \hline
\cthreebox & \nimsum &  \nimsum & \nimsum & \nimsum & 0 & \nimsum & \nimsum & \nimsum \\ \hline
\end{array}
\end{math}
\end{center}
\caption[prod1]{Computing the Grundy value of $S \bipa S'$ depending on the equivalence class of $S$ and $S'$.}
\label{tab:prod1}
\end{table}

We will need some technical lemmas before proving the theorem:

\begin{lemma}\label{lem:equivstar}
	We have:
\begin{enumerate}
\item $P_1 \sim_1$ \sstar{2,1}
\item $P_2 \sim_1$ \sstar{2,2}
\item \sstar{1,1} $\sim_1$ \sstar{2,2,1}
\item \sstar{2,1} $\sim_1$ \sstar{2,2,2}.
\end{enumerate} 
Therefore, any two elements in $\conestar$ (resp. $\ctwostar$) are $\sim_1$-equivalent.
\end{lemma}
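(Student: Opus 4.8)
The plan is to unfold the definition of $\sim_1$: the assertion $S\sim_1 S'$ is equivalent to $(S\bipa\hat{S})+(S'\bipa\hat{S})$ being a $\outcomeP$-position for every subdivided star $\hat{S}$, and I would prove each of the four items by exhibiting, for every first-player move, a second-player reply that returns to a $\outcomeP$-position. I would run the argument by induction on $|V(\hat{S})|$ with $S$ and $S'$ treated as fixed small stars; since some of the replies below modify the fixed side rather than $\hat{S}$, I would actually prove the four items \emph{simultaneously} inside a single induction, so that a reply needed in item~(4) may invoke item~(3), a reply in item~(3) may invoke item~(2), and so on, without circularity. Before starting I may use Theorem~\ref{thm:modbipodes} together with Lemma~\ref{lem:modkpodes} to assume every path of $\hat{S}$ has length $1$ or $2$; this is a convenience, not a necessity. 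Granting the four items, the last sentence is immediate: $\sim_1$ inherits transitivity from $\equiv$, so (1) and (4) chain into $P_1\sim_1 S_{2,1}\sim_1 S_{2,2,2}$ (making $\conestar$ a single class) and (2) gives $P_2\sim_1 S_{2,2}$ (making $\ctwostar$ a single class).

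The engine is a \emph{mirror-and-match} strategy. In a bistar $S\bipa\hat{S}$, write $c$ for the centre of $\hat{S}$ and $c'$ for the centre of the fixed star; the middle edge is $cc'$. Whenever $\hat{S}$ has at least two paths the middle edge is frozen (removing $c$, or $\{c,c'\}$, would disconnect $\hat{S}$), so every first-player move lies entirely inside one copy of $\hat{S}$ or entirely inside one fixed star. A move inside $\hat{S}$ on one bistar is answered by the same move inside $\hat{S}$ on the other bistar, and the induction hypothesis applies since the common part has shrunk. A move inside the fixed star of one bistar reaches $\bar S\bipa\hat{S}$ for some option $\bar S$ of $S$ (resp.\ $S'$); here the second player plays again over the same $\hat{S}$ so as to produce a sum of two $\sim_1$-equivalent bistars, typically by shrinking the fixed star of the other bistar to a partner $\bar S''$ with $\bar S\sim_1\bar S''$, or occasionally by ``undoing'' the move on the bistar the first player just touched (possible because, for instance, both $S_{1,1,2}$ and $S_{2,1}$ have $S_{1,1}$ among their options). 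All fixed-star options that occur in (1)--(4) lie in a finite family of small stars, and the matchings they need are supplied either by the already-known equivalences $P_3\sim_1 S_2\sim_1 S_{1,1}\sim_1\emptyset$ (Lemma~\ref{lem:P3empty}), by Lemma~\ref{lem:P3onevertex}, or by another of the four items in the same induction. When $\hat{S}$ is a path rather than a genuine star, so it has a single arm, the mirroring uses the path symmetry of Observation~\ref{lem:keylemma}, exactly as in the proofs of Lemmas~\ref{lem:P3empty} and~\ref{lem:P3onevertex}.

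The main obstacle, and the bulk of the work, is the finite but intricate collection of \emph{boundary cases}: $\hat{S}\in\{\emptyset,P_1,P_2,P_3\}$, and, for larger $\hat{S}$, precisely those first-player moves the mirror cannot copy — moves that empty $\hat{S}$, or that shrink a fixed star until the middle edge becomes removable (this already happens in item~(1), where $c'$ is a leaf, and in items~(2) and~(4) once the fixed star has collapsed to a path). For $\hat{S}$ of bounded size both $S\bipa\hat{S}$ and $S'\bipa\hat{S}$ are explicit small graphs, and one checks directly that their Grundy values agree, reading them off Figure~\ref{fig:tabgrun} after reducing paths modulo $3$ via Lemma~\ref{lem:modkpodes}, or from Proposition~\ref{prop:033pathsAndCycles} and Lemma~\ref{lem:grundyStars}. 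For each ``freeing'' move the second player either mimics it on the other bistar when legal, or else plays inside a fixed star to reach a star equivalent to $\emptyset$, using Lemmas~\ref{lem:P3empty} and~\ref{lem:P3onevertex}; in every case the residual position is again a $\outcomeP$-position of the inductive form. I expect essentially all the risk to lie in making this case analysis exhaustive and in checking that the simultaneous induction is well-founded, i.e.\ that every reply strictly decreases $|V(\hat{S})|$ when it acts on $\hat{S}$ and otherwise lands in an already-settled base configuration.
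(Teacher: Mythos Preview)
Your approach is essentially the same as the paper's: induction on $|V(\hat{S})|$, with base cases $\hat{S}\in\{\emptyset,P_1,P_2\}$ read off the star table, a mirror reply to any move inside $\hat{S}$, and a hand-matched reply to any move inside the fixed small star, the later items using the earlier ones (item~(2) uses item~(1), item~(3) uses item~(2), item~(4) uses items~(2) and~(3)). Two small remarks: the paper proves the four items \emph{sequentially} rather than by simultaneous induction, which sidesteps any worry about well-foundedness when item~(4) calls item~(3) at the \emph{same} $\hat{S}$; and Lemma~\ref{lem:P3onevertex} concerns the length-$2$ middle path ($\tripa$) and is not actually used here---only Lemma~\ref{lem:P3empty} and the $P_3$-reduction $\emptyset\sim_1 S_2\sim_1 S_{1,1}$ are needed.
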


\begin{proof}
Each of these equivalences will be proved in the same way: for an equivalence $S \sim_1 S'$, we prove that for every subdivided star $\hat{S}$, $\grundy(\hat{S} \bipa S) = \grundy(\hat{S} \bipa S')$. We will use induction on the size of $\hat{S}$. The base cases will be when $|\hat{S}| \in \{0,1,2\}$, that is to say when the first player is able to take the central vertex of $\hat{S}$. Each of these cases corresponds to a subdivided star, whose Grundy value is given in Figure~\ref{fig:tabgrun}.
In the inductive part, we need to prove that for every move on $\hat{S}\bipa S + \hat{S} \bipa S'$ by the first player, the second player has a move leading to a $\outcomeP$-position. In every case, if the first player plays on $\hat{S}$, then the second player can replicate the move, allowing us to invoke the induction hypothesis.
Thus, we will only consider the moves on $S$ or $S'$ in each case.

\noindent\textbf{Case 1 :} $P_1 \sim_1$ \sstar{2,1}

Figure~\ref{fig:pod0EQUIV1pod21} shows the possible moves on $P_1$ or \sstar{2,1}, and the answer leading to a $\outcomeP$-position (for readability, we write $S$ instead of $\hat{S}$ in the figure).

\begin{figure}[!h]
\centering
\begin{tikzpicture}
	\node (orig) at (0,0) {\fbox{
	\begin{tikzpicture}[scale=0.5]
		\node[noeud] (1) at (-1,0) {};
		\node[noeud] (2) at (-2,0) {};
		\draw (-2,0) -- (-2.75,0.5);
		\draw (-2,0) -- (-2.75,-0.5);
		\draw (-2.75,0.5) arc (135:225:0.7);
		\node (2b) at (-2.65,0) {$S$};
		\draw (1) -- (2);
		
		\draw (-0.4,0) node {+};
		
		\draw (1,0) -- (0.25,0.5);
		\draw (1,0) -- (0.25,-0.5);
		\draw (0.25,0.5) arc (135:225:0.7);
		\node (3b) at (0.35,0) {$S$};
		\node[noeud] (3) at (1,0) {};
		\node[noeud] (4) at (2,0) {};
		\node[noeud] (6) at (3,1) {};
		\node[noeud] (5) at (3,-1) {};
		\node[noeud] (7) at (4,1) {};
		\draw (3) -- (4);
		\draw (4) to (6);
		\draw (4) to (5);
		\draw (6) -- (7);
	\end{tikzpicture}}
	};
	
	\node (removeOnePod0) at (4.5,0) {
	\begin{tikzpicture}[scale=0.5]
		%\node[noeud] (1) at (-1,0) {};
		\node[noeud] (2) at (-2,0) {};
		\draw (-2,0) -- (-2.75,0.5);
		\draw (-2,0) -- (-2.75,-0.5);
		\draw (-2.75,0.5) arc (135:225:0.7);
		\node (2b) at (-2.65,0) {$S$};
		%\draw (1) -- (2);
		
		\draw (-1,0) node {+};
		
		\draw (1,0) -- (0.25,0.5);
		\draw (1,0) -- (0.25,-0.5);
		\draw (0.25,0.5) arc (135:225:0.7);
		\node (3b) at (0.35,0) {$S$};
		\node[noeud] (3) at (1,0) {};
		\node[noeud] (4) at (2,0) {};
		\node[noeud] (6) at (3,1) {};
		\node[noeud] (5) at (3,-1) {};
		\node[noeud] (7) at (4,1) {};
		\draw (3) -- (4);
		\draw (4) to (6);
		\draw (4) to (5);
		\draw (6) -- (7);
	\end{tikzpicture}
	};
	\node (removeOnePod0B) at (9,0) {
	\begin{tikzpicture}[scale=0.5]
		%\node[noeud] (1) at (-1,0) {};
		\node[noeud] (2) at (-2,0) {};
		\draw (-2,0) -- (-2.75,0.5);
		\draw (-2,0) -- (-2.75,-0.5);
		\draw (-2.75,0.5) arc (135:225:0.7);
		\node (2b) at (-2.65,0) {$S$};
		%\draw (1) -- (2);
		
		\draw (-1,0) node {+};
		
		\draw (1,0) -- (0.25,0.5);
		\draw (1,0) -- (0.25,-0.5);
		\draw (0.25,0.5) arc (135:225:0.7);
		\node (3b) at (0.35,0) {$S$};
		\node[noeud] (3) at (1,0) {};
		\node[noeud] (4) at (2,0) {};
		\node[noeud] (6) at (3,1) {};
		\node (5) at (3,-1) {};
		\node[noeud] (7) at (4,1) {};
		\draw (3) -- (4);
		\draw (4) to (6);
		%\draw (4) to[out=270,in=180] (5);
		\draw (6) -- (7);
	\end{tikzpicture}
	};
	\draw (13,0.25) node {$\outcomeP$ by};
	\draw (13,-0.25) node {$\emptyset \sim_1 P_3$};
	
	\node (playToPod2) at (4.5,-1.5) {
	\begin{tikzpicture}[scale=0.5]
		\node[noeud] (1) at (-1,0) {};
		\node[noeud] (2) at (-2,0) {};
		\draw (-2,0) -- (-2.75,0.5);
		\draw (-2,0) -- (-2.75,-0.5);
		\draw (-2.75,0.5) arc (135:225:0.7);
		\node (2b) at (-2.65,0) {$S$};
		\draw (1) -- (2);
		
		\draw (-0.4,0) node {+};
		
		\draw (1,0) -- (0.25,0.5);
		\draw (1,0) -- (0.25,-0.5);
		\draw (0.25,0.5) arc (135:225:0.7);
		\node (3b) at (0.35,0) {$S$};
		\node[noeud] (3) at (1,0) {};
		\node[noeud] (4) at (2,0) {};
		\node[noeud] (6) at (3,1) {};
		\node (5) at (3,-1) {};
		\node[noeud] (7) at (4,1) {};
		\draw (3) -- (4);
		\draw (4) to (6);
		%\draw (4) to[out=270,in=180] (5);
		\draw (6) -- (7);
	\end{tikzpicture}
	};
	\node (playToPod1) at (4.5,-3) {
	\begin{tikzpicture}[scale=0.5]
		\node[noeud] (1) at (-1,0) {};
		\node[noeud] (2) at (-2,0) {};
		\draw (-2,0) -- (-2.75,0.5);
		\draw (-2,0) -- (-2.75,-0.5);
		\draw (-2.75,0.5) arc (135:225:0.7);
		\node (2b) at (-2.65,0) {$S$};
		\draw (1) -- (2);
		
		\draw (-0.4,0) node {+};
		
		\draw (1,0) -- (0.25,0.5);
		\draw (1,0) -- (0.25,-0.5);
		\draw (0.25,0.5) arc (135:225:0.7);
		\node (3b) at (0.35,0) {$S$};
		\node[noeud] (3) at (1,0) {};
		\node[noeud] (4) at (2,0) {};
		\node (6) at (3,1) {};
		\node[noeud] (5) at (3,-1) {};
		\node (7) at (4,1) {};
		\draw (3) -- (4);
		%\draw (4) to[out=90,in=180] (6);
		\draw (4) to (5);
		%\draw (6) -- (7);
	\end{tikzpicture}
	};
	\node (playToPod0) at (9,-2.25) {
	\begin{tikzpicture}[scale=0.5]
		\node[noeud] (1) at (-1,0) {};
		\node[noeud] (2) at (-2,0) {};
		\draw (-2,0) -- (-2.75,0.5);
		\draw (-2,0) -- (-2.75,-0.5);
		\draw (-2.75,0.5) arc (135:225:0.7);
		\node (2b) at (-2.65,0) {$S$};
		\draw (1) -- (2);
		
		\draw (-0.4,0) node {+};
		
		\draw (1,0) -- (0.25,0.5);
		\draw (1,0) -- (0.25,-0.5);
		\draw (0.25,0.5) arc (135:225:0.7);
		\node (3b) at (0.35,0) {$S$};
		\node[noeud] (3) at (1,0) {};
		\node[noeud] (4) at (2,0) {};
		\node (6) at (3,1) {};
		\node (5) at (3,-1) {};
		\node (7) at (4,1) {};
		\draw (3) -- (4);
		%\draw (4) to[out=90,in=180] (6);
		%\draw (4) to[out=270,in=180] (5);
		%\draw (6) -- (7);
	\end{tikzpicture}
	};
	\draw (13,-2.25) node {$\outcomeP$};
	
	\node (removeOnePod21) at (4.5,-4.5) {
	\begin{tikzpicture}[scale=0.5]
		\node[noeud] (1) at (-1,0) {};
		\node[noeud] (2) at (-2,0) {};
		\draw (-2,0) -- (-2.75,0.5);
		\draw (-2,0) -- (-2.75,-0.5);
		\draw (-2.75,0.5) arc (135:225:0.7);
		\node (2b) at (-2.65,0) {$S$};
		\draw (1) -- (2);
		
		\draw (-0.4,0) node {+};
		
		\draw (1,0) -- (0.25,0.5);
		\draw (1,0) -- (0.25,-0.5);
		\draw (0.25,0.5) arc (135:225:0.7);
		\node (3b) at (0.35,0) {$S$};
		\node[noeud] (3) at (1,0) {};
		\node[noeud] (4) at (2,0) {};
		\node[noeud] (6) at (3,1) {};
		\node[noeud] (5) at (3,-1) {};
		\node (7) at (4,1) {};
		\draw (3) -- (4);
		\draw (4) to (6);
		\draw (4) to (5);
		%\draw (6) -- (7);
	\end{tikzpicture}
	};
	\node (removeOnePod21B) at (9,-4.5) {
	\begin{tikzpicture}[scale=0.5]
		%\node[noeud] (1) at (-1,0) {};
		\node[noeud] (2) at (-2,0) {};
		\draw (-2,0) -- (-2.75,0.5);
		\draw (-2,0) -- (-2.75,-0.5);
		\draw (-2.75,0.5) arc (135:225:0.7);
		\node (2b) at (-2.65,0) {$S$};
		%\draw (1) -- (2);
		
		\draw (-1,0) node {+};
		
		\draw (1,0) -- (0.25,0.5);
		\draw (1,0) -- (0.25,-0.5);
		\draw (0.25,0.5) arc (135:225:0.7);
		\node (3b) at (0.35,0) {$S$};
		\node[noeud] (3) at (1,0) {};
		\node[noeud] (4) at (2,0) {};
		\node[noeud] (6) at (3,1) {};
		\node[noeud] (5) at (3,-1) {};
		\node (7) at (4,1) {};
		\draw (3) -- (4);
		\draw (4) to (6);
		\draw (4) to (5);
		%\draw (6) -- (7);
	\end{tikzpicture}
	};
	\draw (13,-4.25) node {$\outcomeP$ by};
	\draw (13,-4.75) node {$\emptyset \sim_1$ \sstar{1,1}};
	
	\node (playToPm) at (4.5,-6) {
	\begin{tikzpicture}[scale=0.5]
		%\node[noeud] (1) at (-0.5,0) {};
		%\node[noeud] (2) at (-1.5,0) {};
		\node (2b) at (-2,0) {$P_{m-1}$};
		%\draw (1) -- (2);
		\draw (-0.6,0) node {+};
		
		\draw (1,0) -- (0.25,0.5);
		\draw (1,0) -- (0.25,-0.5);
		\draw (0.25,0.5) arc (135:225:0.7);
		\node (3b) at (0.4,0) {$S$};
		\node[noeud] (3) at (1,0) {};
		\node[noeud] (4) at (2,0) {};
		\node[noeud] (6) at (3,1) {};
		\node[noeud] (5) at (3,-1) {};
		\node[noeud] (7) at (4,1) {};
		\draw (3) -- (4);
		\draw (4) to (6);
		\draw (4) to (5);
		\draw (6) -- (7);
	\end{tikzpicture}
	};
	\draw (4.5,-7) node {(if the first player can};
	\draw (4.5,-7.5) node {remove the central vertex,};
	\draw (4.5,-8) node {then $S=P_m$ with $m \geq 3$)};
	\node (playToPmB) at (9,-6) {
	\begin{tikzpicture}[scale=0.5]
		%\node[noeud] (1) at (-0.5,0) {};
		%\node[noeud] (2) at (-1.5,0) {};
		\node (2b) at (-2,0) {$P_{m-1}$};
		%\draw (1) -- (2);
		\draw (-0.75,0) node {+};
		
		\node (3b) at (0.5,0) {$P_{m+2}$};
		%\node[noeud] (3) at (1,0) {};
		%\node[noeud] (4) at (2,0) {};
		%\node[noeud] (6) at (3,1) {};
		%\node[noeud] (5) at (3,-1) {};
		%\node[noeud] (7) at (4,1) {};
		%\draw (3) -- (4);
		%\draw (5) -- (4) -- (6) -- (7);
	\end{tikzpicture}
	};
	\draw (13,-6) node {$\outcomeP$};
	
	\draw (orig) -- (2.25,0) [->] (2.25,0) -- (removeOnePod0);
	\draw[->] (removeOnePod0) -- (removeOnePod0B);
	\draw (orig) -- (2.25,0) [->] (2.25,0) |- (playToPod1);
	\draw (orig) -- (2.25,0) [->] (2.25,0) |- (playToPod2);
	\draw[->] (playToPod1) -- (playToPod0);
	\draw[->] (playToPod2) -- (playToPod0);
	\draw (orig) -- (2.25,0) [->] (2.25,0) |- (removeOnePod21);
	\draw[->] (removeOnePod21) -- (removeOnePod21B);
	\draw (orig) -- (2.25,0) [->] (2.25,0) |- (playToPm);
	\draw[->] (playToPm) -- (playToPmB);
\end{tikzpicture}
\caption{The inductive part of the proof that $P_1 \sim_1$\sstar{2,1}.}
\label{fig:pod0EQUIV1pod21}
\end{figure}
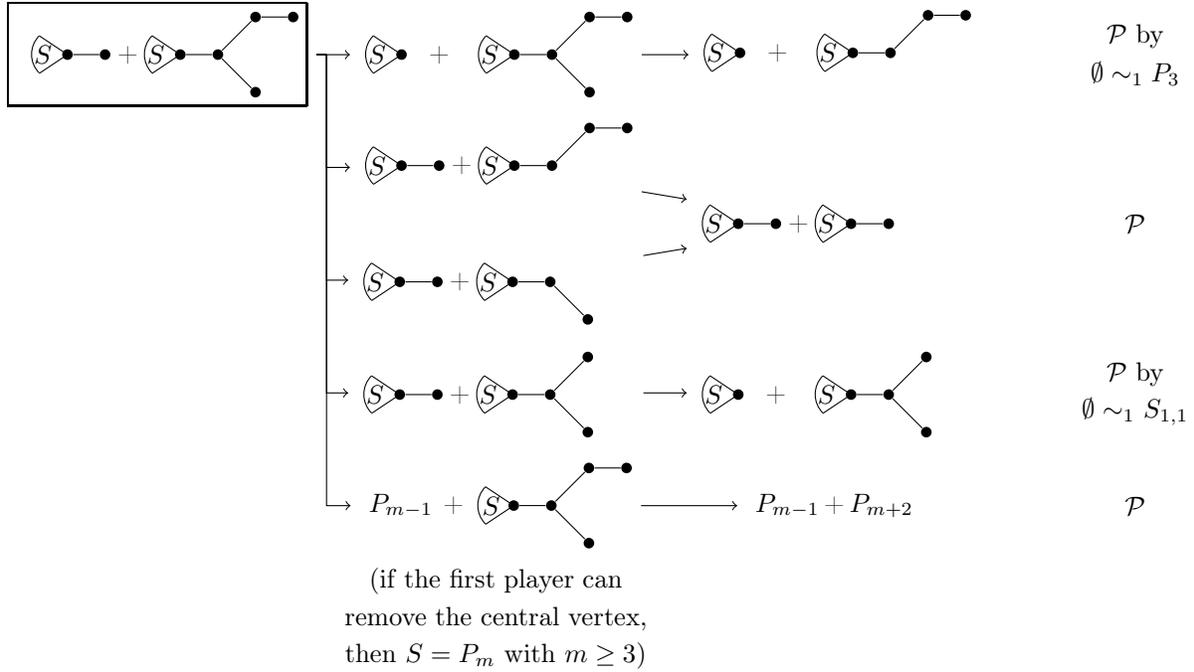

\noindent\textbf{Case 2 :} $P_2 \sim_1$ \sstar{2,2}

Figure~\ref{fig:pod1EQUIV1pod22} shows the possible moves on $P_2$ or \sstar{2,2}, and the answer leading to a $\outcomeP$-position (for readability, we write $S$ instead of $\hat{S}$ in the figure).

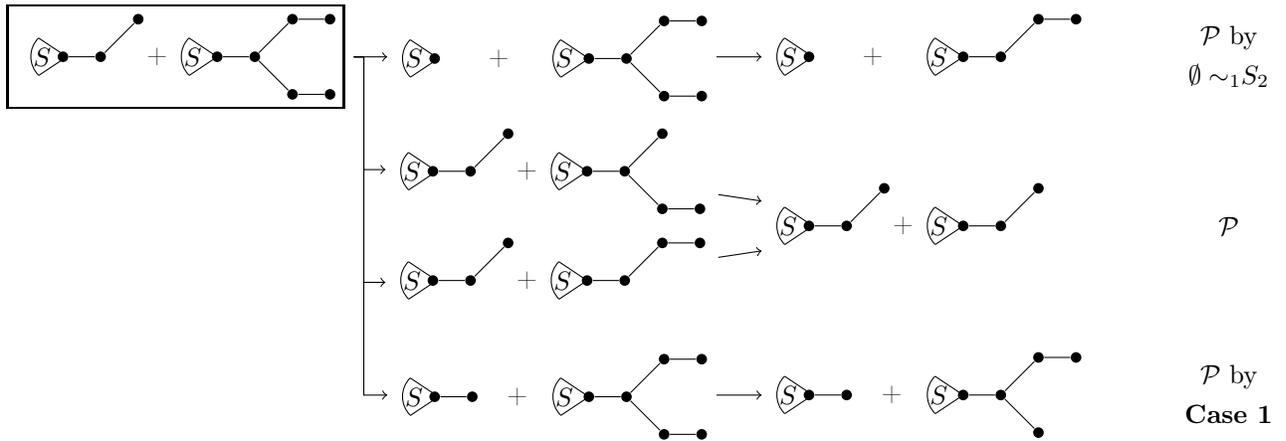
\begin{figure}[!h]
\centering
\begin{tikzpicture}
	\node (orig) at (0,0) {\fbox{
	\begin{tikzpicture}[scale=0.5]
		\node[noeud] (1) at (-1.1,1) {};
		\node[noeud] (2) at (-2.1,0) {};
		\node[noeud] (2b) at (-3.1,0) {};
		\node (2c) at (-3.65,0) {$S$};
		\draw (-3,0) -- (-3.75,0.5);
		\draw (-3,0) -- (-3.75,-0.5);
		\draw (-3.75,0.5) arc (135:225:0.7);
		\draw (1) -- (2);
		\draw (2) -- (2b);
		
		\draw (-0.6,0) node {+};
		
		\draw (1,0) -- (0.25,0.5);
		\draw (1,0) -- (0.25,-0.5);
		\draw (0.25,0.5) arc (135:225:0.7);
		\node (3b) at (0.35,0) {$S$};
		\node[noeud] (3) at (1,0) {};
		\node[noeud] (4) at (2,0) {};
		\node[noeud] (6) at (3,1) {};
		\node[noeud] (5) at (3,-1) {};
		\node[noeud] (5b) at (4,-1) {};
		\node[noeud] (7) at (4,1) {};
		\draw (3) -- (4);
		\draw (4) to (6);
		\draw (4) to (5);
		\draw (6) -- (7);
		\draw (5) -- (5b);
	\end{tikzpicture}}
	};
	
	\node (removeTwoPod1) at (5,0) {
	\begin{tikzpicture}[scale=0.5]
		\node (1) at (-1.1,1) {};
		\node (2) at (-2.1,0) {};
		\node[noeud] (2b) at (-3.1,0) {};
		\node (2c) at (-3.65,0) {$S$};
		\draw (-3,0) -- (-3.75,0.5);
		\draw (-3,0) -- (-3.75,-0.5);
		\draw (-3.75,0.5) arc (135:225:0.7);
		%\draw (1) -- (2);
		%\draw (2) -- (2b);
		
		\draw (-1.4,0) node {+};
		
		\draw (1,0) -- (0.25,0.5);
		\draw (1,0) -- (0.25,-0.5);
		\draw (0.25,0.5) arc (135:225:0.7);
		\node (3b) at (0.35,0) {$S$};
		\node[noeud] (3) at (1,0) {};
		\node[noeud] (4) at (2,0) {};
		\node[noeud] (6) at (3,1) {};
		\node[noeud] (5) at (3,-1) {};
		\node[noeud] (5b) at (4,-1) {};
		\node[noeud] (7) at (4,1) {};
		\draw (3) -- (4);
		\draw (4) to (6);
		\draw (4) to (5);
		\draw (6) -- (7);
		\draw (5) -- (5b);
	\end{tikzpicture}
	};
	\node (removeTwoPod1B) at (10,0) {
	\begin{tikzpicture}[scale=0.5]
		\node (1) at (-1.1,1) {};
		\node (2) at (-2.1,0) {};
		\node[noeud] (2b) at (-3.1,0) {};
		\node (2c) at (-3.65,0) {$S$};
		\draw (-3,0) -- (-3.75,0.5);
		\draw (-3,0) -- (-3.75,-0.5);
		\draw (-3.75,0.5) arc (135:225:0.7);
		%\draw (1) -- (2);
		%\draw (2) -- (2b);
		
		\draw (-1.4,0) node {+};
		
		\draw (1,0) -- (0.25,0.5);
		\draw (1,0) -- (0.25,-0.5);
		\draw (0.25,0.5) arc (135:225:0.7);
		\node (3b) at (0.35,0) {$S$};
		\node[noeud] (3) at (1,0) {};
		\node[noeud] (4) at (2,0) {};
		\node[noeud] (6) at (3,1) {};
		\node (5) at (3,-1) {};
		\node (5b) at (4,-1) {};
		\node[noeud] (7) at (4,1) {};
		\draw (3) -- (4);
		\draw (4) to (6);
		%\draw (4) to[out=270,in=180] (5);
		\draw (6) -- (7);
		%\draw (5) -- (5b);
	\end{tikzpicture}
	};
	\draw (14,0.25) node {$\outcomeP$ by};
	\draw (14,-0.25) node {$\emptyset \sim_1$\sstar{2}};
	
	\node (removeOnePod22) at (5,-1.5) {
	\begin{tikzpicture}[scale=0.5]
		\node[noeud] (1) at (-1.1,1) {};
		\node[noeud] (2) at (-2.1,0) {};
		\node[noeud] (2b) at (-3.1,0) {};
		\node (2c) at (-3.65,0) {$S$};
		\draw (-3,0) -- (-3.75,0.5);
		\draw (-3,0) -- (-3.75,-0.5);
		\draw (-3.75,0.5) arc (135:225:0.7);
		\draw (1) -- (2);
		\draw (2) -- (2b);
		
		\draw (-0.6,0) node {+};
		
		\draw (1,0) -- (0.25,0.5);
		\draw (1,0) -- (0.25,-0.5);
		\draw (0.25,0.5) arc (135:225:0.7);
		\node (3b) at (0.35,0) {$S$};
		\node[noeud] (3) at (1,0) {};
		\node[noeud] (4) at (2,0) {};
		\node[noeud] (6) at (3,1) {};
		\node[noeud] (5) at (3,-1) {};
		\node[noeud] (5b) at (4,-1) {};
		\node (7) at (4,1) {};
		\draw (3) -- (4);
		\draw (4) to (6);
		\draw (4) to (5);
		%\draw (6) -- (7);
		\draw (5) -- (5b);
	\end{tikzpicture}
	};
	\node (removeTwoPod22) at (5,-3) {
	\begin{tikzpicture}[scale=0.5]
		\node[noeud] (1) at (-1.1,1) {};
		\node[noeud] (2) at (-2.1,0) {};
		\node[noeud] (2b) at (-3.1,0) {};
		\node (2c) at (-3.65,0) {$S$};
		\draw (-3,0) -- (-3.75,0.5);
		\draw (-3,0) -- (-3.75,-0.5);
		\draw (-3.75,0.5) arc (135:225:0.7);
		\draw (1) -- (2);
		\draw (2) -- (2b);
		
		\draw (-0.6,0) node {+};
		
		\draw (1,0) -- (0.25,0.5);
		\draw (1,0) -- (0.25,-0.5);
		\draw (0.25,0.5) arc (135:225:0.7);
		\node (3b) at (0.35,0) {$S$};
		\node[noeud] (3) at (1,0) {};
		\node[noeud] (4) at (2,0) {};
		\node[noeud] (6) at (3,1) {};
		\node (5) at (3,-1) {};
		\node (5b) at (4,-1) {};
		\node[noeud] (7) at (4,1) {};
		\draw (3) -- (4);
		\draw (4) to (6);
		%\draw (4) to[out=270,in=180] (5);
		\draw (6) -- (7);
		%\draw (5) -- (5b);
	\end{tikzpicture}
	};
	\node (removePod22) at (10,-2.25) {
	\begin{tikzpicture}[scale=0.5]
		\node[noeud] (1) at (-1.1,1) {};
		\node[noeud] (2) at (-2.1,0) {};
		\node[noeud] (2b) at (-3.1,0) {};
		\node (2c) at (-3.65,0) {$S$};
		\draw (-3,0) -- (-3.75,0.5);
		\draw (-3,0) -- (-3.75,-0.5);
		\draw (-3.75,0.5) arc (135:225:0.7);
		\draw (1) -- (2);
		\draw (2) -- (2b);
		
		\draw (-0.6,0) node {+};
		
		\draw (1,0) -- (0.25,0.5);
		\draw (1,0) -- (0.25,-0.5);
		\draw (0.25,0.5) arc (135:225:0.7);
		\node (3b) at (0.35,0) {$S$};
		\node[noeud] (3) at (1,0) {};
		\node[noeud] (4) at (2,0) {};
		\node[noeud] (6) at (3,1) {};
		\node (5) at (3,-1) {};
		\node (5b) at (4,-1) {};
		\node (7) at (4,1) {};
		\draw (3) -- (4);
		\draw (4) to (6);
		%\draw (4) to[out=270,in=180] (5);
		%\draw (6) -- (7);
		%\draw (5) -- (5b);
	\end{tikzpicture}
	};
	\draw (14,-2.25) node {$\outcomeP$};
	
	\node (removeOnePod1) at (5,-4.5) {
	\begin{tikzpicture}[scale=0.5]
		\node (1) at (-1.1,1) {};
		\node[noeud] (2) at (-2.1,0) {};
		\node[noeud] (2b) at (-3.1,0) {};
		\node (2c) at (-3.65,0) {$S$};
		\draw (-3,0) -- (-3.75,0.5);
		\draw (-3,0) -- (-3.75,-0.5);
		\draw (-3.75,0.5) arc (135:225:0.7);
		%\draw (1) -- (2);
		\draw (2) -- (2b);
		
		\draw (-0.9,0) node {+};
		
		\draw (1,0) -- (0.25,0.5);
		\draw (1,0) -- (0.25,-0.5);
		\draw (0.25,0.5) arc (135:225:0.7);
		\node (3b) at (0.35,0) {$S$};
		\node[noeud] (3) at (1,0) {};
		\node[noeud] (4) at (2,0) {};
		\node[noeud] (6) at (3,1) {};
		\node[noeud] (5) at (3,-1) {};
		\node[noeud] (5b) at (4,-1) {};
		\node[noeud] (7) at (4,1) {};
		\draw (3) -- (4);
		\draw (4) to (6);
		\draw (4) to (5);
		\draw (6) -- (7);
		\draw (5) -- (5b);
	\end{tikzpicture}
	};
	\node (removeOnePod1B) at (10,-4.5) {
	\begin{tikzpicture}[scale=0.5]
		\node (1) at (-1.1,1) {};
		\node[noeud] (2) at (-2.1,0) {};
		\node[noeud] (2b) at (-3.1,0) {};
		\node (2c) at (-3.65,0) {$S$};
		\draw (-3,0) -- (-3.75,0.5);
		\draw (-3,0) -- (-3.75,-0.5);
		\draw (-3.75,0.5) arc (135:225:0.7);
		%\draw (1) -- (2);
		\draw (2) -- (2b);
		
		\draw (-0.9,0) node {+};
		
		\draw (1,0) -- (0.25,0.5);
		\draw (1,0) -- (0.25,-0.5);
		\draw (0.25,0.5) arc (135:225:0.7);
		\node (3b) at (0.35,0) {$S$};
		\node[noeud] (3) at (1,0) {};
		\node[noeud] (4) at (2,0) {};
		\node[noeud] (6) at (3,1) {};
		\node[noeud] (5) at (3,-1) {};
		\node (5b) at (4,-1) {};
		\node[noeud] (7) at (4,1) {};
		\draw (3) -- (4);
		\draw (4) to (6);
		\draw (4) to (5);
		\draw (6) -- (7);
		%\draw (5) -- (5b);
	\end{tikzpicture}
	};
	\draw (14,-4.25) node {$\outcomeP$ by};
	\draw (14,-4.75) node {\textbf{Case 1}};
	
	\draw (orig) -- (2.5,0) [->] (2.5,0) -- (removeTwoPod1);
	\draw[->] (removeTwoPod1) -- (removeTwoPod1B);
	\draw (orig) -- (2.5,0) [->] (2.5,0) |- (removeOnePod22);
	\draw (orig) -- (2.5,0) [->] (2.5,0) |- (removeTwoPod22);
	\draw[->] (removeOnePod22) -- (removePod22);
	\draw[->] (removeTwoPod22) -- (removePod22);
	\draw (orig) -- (2.5,0) [->] (2.5,0) |- (removeOnePod1);
	\draw[->] (removeOnePod1) -- (removeOnePod1B);
\end{tikzpicture}
\caption{The inductive part of the proof that $P_2 \sim_1$ \sstar{2,2}.}
\label{fig:pod1EQUIV1pod22}
\end{figure}

\noindent\textbf{Case 3 :} \sstar{1,1} $\sim_1$ \sstar{2,2,1}

Figure~\ref{fig:pod11EQUIV1pod221} shows the possible moves on \sstar{1,1} or \sstar{2,2,1}, and the answer leading to a $\outcomeP$-position (for readability, we write $S$ instead of $\hat{S}$ in the figure).

\begin{figure}[!h]
\centering
\begin{tikzpicture}
	\node (orig) at (0,0) {\fbox{
	\begin{tikzpicture}[scale=0.5]
		\node[noeud] (1) at (-1.1,1) {};
		\node[noeud] (1b) at (-1.1,-1) {};
		\node[noeud] (2) at (-2.1,0) {};
		\node[noeud] (2b) at (-3.1,0) {};
		\node (2c) at (-3.65,0) {$S$};
		\draw (-3,0) -- (-3.75,0.5);
		\draw (-3,0) -- (-3.75,-0.5);
		\draw (-3.75,0.5) arc (135:225:0.7);
		\draw (1) -- (2);
		\draw (2) -- (2b);
		\draw (2) -- (1b);
		
		\draw (-0.6,0) node {+};
		
		\draw (1,0) -- (0.25,0.5);
		\draw (1,0) -- (0.25,-0.5);
		\draw (0.25,0.5) arc (135:225:0.7);
		\node (3b) at (0.35,0) {$S$};
		\node[noeud] (3) at (1,0) {};
		\node[noeud] (4) at (2,0) {};
		\node[noeud] (6) at (3,1) {};
		\node[noeud] (5) at (3,-1) {};
		\node[noeud] (5b) at (4,-1) {};
		\node[noeud] (7) at (4,1) {};
		\node[noeud] (8) at (3,0) {};
		\draw (3) -- (4);
		\draw (8) -- (4);
		\draw (4) to (6);
		\draw (4) to (5);
		\draw (6) -- (7);
		\draw (5) -- (5b);
	\end{tikzpicture}}
	};
	
	\node (removeTwoPod1) at (5,0) {
	\begin{tikzpicture}[scale=0.5]
		\node[noeud] (1) at (-1.1,1) {};
		\node[noeud] (1b) at (-1.1,-1) {};
		\node[noeud] (2) at (-2.1,0) {};
		\node[noeud] (2b) at (-3.1,0) {};
		\node (2c) at (-3.65,0) {$S$};
		\draw (-3,0) -- (-3.75,0.5);
		\draw (-3,0) -- (-3.75,-0.5);
		\draw (-3.75,0.5) arc (135:225:0.7);
		\draw (1) -- (2);
		\draw (2) -- (2b);
		\draw (2) -- (1b);
		
		\draw (-0.6,0) node {+};
		
		\draw (1,0) -- (0.25,0.5);
		\draw (1,0) -- (0.25,-0.5);
		\draw (0.25,0.5) arc (135:225:0.7);
		\node (3b) at (0.35,0) {$S$};
		\node[noeud] (3) at (1,0) {};
		\node[noeud] (4) at (2,0) {};
		\node[noeud] (6) at (3,1) {};
		\node[noeud] (5) at (3,-1) {};
		\node[noeud] (5b) at (4,-1) {};
		\node[noeud] (7) at (4,1) {};
		\node (8) at (3,0) {};
		\draw (3) -- (4);
		%\draw (8) -- (4);
		\draw (4) to (6);
		\draw (4) to (5);
		\draw (6) -- (7);
		\draw (5) -- (5b);
	\end{tikzpicture}
	};
	\node (removeTwoPod1B) at (10,0) {
	\begin{tikzpicture}[scale=0.5]
		\node[noeud] (1) at (-1.1,1) {};
		\node[noeud] (1b) at (-1.1,-1) {};
		\node[noeud] (2) at (-2.1,0) {};
		\node[noeud] (2b) at (-3.1,0) {};
		\node (2c) at (-3.65,0) {$S$};
		\draw (-3,0) -- (-3.75,0.5);
		\draw (-3,0) -- (-3.75,-0.5);
		\draw (-3.75,0.5) arc (135:225:0.7);
		\draw (1) -- (2);
		\draw (2) -- (2b);
		\draw (2) -- (1b);
		
		\draw (-0.6,0) node {+};
		
		\draw (1,0) -- (0.25,0.5);
		\draw (1,0) -- (0.25,-0.5);
		\draw (0.25,0.5) arc (135:225:0.7);
		\node (3b) at (0.35,0) {$S$};
		\node[noeud] (3) at (1,0) {};
		\node[noeud] (4) at (2,0) {};
		\node[noeud] (6) at (3,1) {};
		\node (5) at (3,-1) {};
		\node (5b) at (4,-1) {};
		\node[noeud] (7) at (4,1) {};
		\node (8) at (3,0) {};
		\draw (3) -- (4);
		\draw (4) to (6);
		\draw (6) -- (7);
	\end{tikzpicture}
	};
	\draw (14,0.25) node {$\outcomeP$ by};
	\draw (14,-0.25) node {\sstar{1,1}$\sim_1$\sstar{2}};
	
	\node (removeOnePod22) at (5,-1.5) {
	\begin{tikzpicture}[scale=0.5]
		\node[noeud] (1) at (-1.1,1) {};
		\node[noeud] (1b) at (-1.1,-1) {};
		\node[noeud] (2) at (-2.1,0) {};
		\node[noeud] (2b) at (-3.1,0) {};
		\node (2c) at (-3.65,0) {$S$};
		\draw (-3,0) -- (-3.75,0.5);
		\draw (-3,0) -- (-3.75,-0.5);
		\draw (-3.75,0.5) arc (135:225:0.7);
		\draw (1) -- (2);
		\draw (2) -- (2b);
		\draw (2) -- (1b);
		
		\draw (-0.6,0) node {+};
		
		\draw (1,0) -- (0.25,0.5);
		\draw (1,0) -- (0.25,-0.5);
		\draw (0.25,0.5) arc (135:225:0.7);
		\node (3b) at (0.35,0) {$S$};
		\node[noeud] (3) at (1,0) {};
		\node[noeud] (4) at (2,0) {};
		\node[noeud] (6) at (3,1) {};
		\node[noeud] (5) at (3,-1) {};
		\node[noeud] (5b) at (4,-1) {};
		\node (7) at (4,1) {};
		\node[noeud] (8) at (3,0) {};
		\draw (3) -- (4);
		\draw (8) -- (4);
		\draw (4) to (6);
		\draw (4) to (5);
		\draw (5) -- (5b);
	\end{tikzpicture}
	};
	\node (removeTwoPod22) at (5,-3) {
	\begin{tikzpicture}[scale=0.5]
		\node[noeud] (1) at (-1.1,1) {};
		\node[noeud] (1b) at (-1.1,-1) {};
		\node[noeud] (2) at (-2.1,0) {};
		\node[noeud] (2b) at (-3.1,0) {};
		\node (2c) at (-3.65,0) {$S$};
		\draw (-3,0) -- (-3.75,0.5);
		\draw (-3,0) -- (-3.75,-0.5);
		\draw (-3.75,0.5) arc (135:225:0.7);
		\draw (1) -- (2);
		\draw (2) -- (2b);
		\draw (2) -- (1b);
		
		\draw (-0.6,0) node {+};
		
		\draw (1,0) -- (0.25,0.5);
		\draw (1,0) -- (0.25,-0.5);
		\draw (0.25,0.5) arc (135:225:0.7);
		\node (3b) at (0.35,0) {$S$};
		\node[noeud] (3) at (1,0) {};
		\node[noeud] (4) at (2,0) {};
		\node[noeud] (6) at (3,1) {};
		\node (5) at (3,-1) {};
		\node (5b) at (4,-1) {};
		\node[noeud] (7) at (4,1) {};
		\node[noeud] (8) at (3,0) {};
		\draw (3) -- (4);
		\draw (8) -- (4);
		\draw (4) to (6);
		\draw (6) -- (7);
	\end{tikzpicture}
	};
	\node (removePod22) at (10,-2.25) {
	\begin{tikzpicture}[scale=0.5]
		\node[noeud] (1) at (-1.1,1) {};
		\node[noeud] (1b) at (-1.1,-1) {};
		\node[noeud] (2) at (-2.1,0) {};
		\node[noeud] (2b) at (-3.1,0) {};
		\node (2c) at (-3.65,0) {$S$};
		\draw (-3,0) -- (-3.75,0.5);
		\draw (-3,0) -- (-3.75,-0.5);
		\draw (-3.75,0.5) arc (135:225:0.7);
		\draw (1) -- (2);
		\draw (2) -- (2b);
		\draw (2) -- (1b);
		
		\draw (-0.6,0) node {+};
		
		\draw (1,0) -- (0.25,0.5);
		\draw (1,0) -- (0.25,-0.5);
		\draw (0.25,0.5) arc (135:225:0.7);
		\node (3b) at (0.35,0) {$S$};
		\node[noeud] (3) at (1,0) {};
		\node[noeud] (4) at (2,0) {};
		\node[noeud] (6) at (3,1) {};
		\node (5) at (3,-1) {};
		\node (5b) at (4,-1) {};
		\node (7) at (4,1) {};
		\node[noeud] (8) at (3,0) {};
		\draw (3) -- (4);
		\draw (8) -- (4);
		\draw (4) to (6);
	\end{tikzpicture}
	};
	\draw (14,-2.25) node {$\outcomeP$};
	
	\node (removeOnePod1) at (5,-4.5) {
	\begin{tikzpicture}[scale=0.5]
		\node[noeud] (1) at (-1.1,1) {};
		\node (1b) at (-1.1,-1) {};
		\node[noeud] (2) at (-2.1,0) {};
		\node[noeud] (2b) at (-3.1,0) {};
		\node (2c) at (-3.65,0) {$S$};
		\draw (-3,0) -- (-3.75,0.5);
		\draw (-3,0) -- (-3.75,-0.5);
		\draw (-3.75,0.5) arc (135:225:0.7);
		\draw (1) -- (2);
		\draw (2) -- (2b);
		
		\draw (-0.6,0) node {+};
		
		\draw (1,0) -- (0.25,0.5);
		\draw (1,0) -- (0.25,-0.5);
		\draw (0.25,0.5) arc (135:225:0.7);
		\node (3b) at (0.35,0) {$S$};
		\node[noeud] (3) at (1,0) {};
		\node[noeud] (4) at (2,0) {};
		\node[noeud] (6) at (3,1) {};
		\node[noeud] (5) at (3,-1) {};
		\node[noeud] (5b) at (4,-1) {};
		\node[noeud] (7) at (4,1) {};
		\node[noeud] (8) at (3,0) {};
		\draw (3) -- (4);
		\draw (8) -- (4);
		\draw (4) to (6);
		\draw (4) to (5);
		\draw (6) -- (7);
		\draw (5) -- (5b);
	\end{tikzpicture}
	};
	\node (removeOnePod1B) at (10,-4.5) {
	\begin{tikzpicture}[scale=0.5]
		\node[noeud] (1) at (-1.1,1) {};
		\node (1b) at (-1.1,-1) {};
		\node[noeud] (2) at (-2.1,0) {};
		\node[noeud] (2b) at (-3.1,0) {};
		\node (2c) at (-3.65,0) {$S$};
		\draw (-3,0) -- (-3.75,0.5);
		\draw (-3,0) -- (-3.75,-0.5);
		\draw (-3.75,0.5) arc (135:225:0.7);
		\draw (1) -- (2);
		\draw (2) -- (2b);
		
		\draw (-0.6,0) node {+};
		
		\draw (1,0) -- (0.25,0.5);
		\draw (1,0) -- (0.25,-0.5);
		\draw (0.25,0.5) arc (135:225:0.7);
		\node (3b) at (0.35,0) {$S$};
		\node[noeud] (3) at (1,0) {};
		\node[noeud] (4) at (2,0) {};
		\node[noeud] (6) at (3,1) {};
		\node[noeud] (5) at (3,-1) {};
		\node[noeud] (5b) at (4,-1) {};
		\node[noeud] (7) at (4,1) {};
		\node (8) at (3,0) {};
		\draw (3) -- (4);
		\draw (4) to (6);
		\draw (4) to (5);
		\draw (6) -- (7);
		\draw (5) -- (5b);
	\end{tikzpicture}
	};
	\draw (14,-4.25) node {$\outcomeP$ by};
	\draw (14,-4.75) node {\textbf{Case 2}};
	
	\draw (orig) -- (2.5,0) [->] (2.5,0) |- (removeTwoPod1);
	\draw[->] (removeTwoPod1) -- (removeTwoPod1B);
	\draw (orig) -- (2.5,0) [->] (2.5,0) |- (removeOnePod22);
	\draw (orig) -- (2.5,0) [->] (2.5,0) |- (removeTwoPod22);
	\draw[->] (removeOnePod22) -- (removePod22);
	\draw[->] (removeTwoPod22) -- (removePod22);
	\draw (orig) -- (2.5,0) [->] (2.5,0) |- (removeOnePod1);
	\draw[->] (removeOnePod1) -- (removeOnePod1B);
\end{tikzpicture}
\caption{The inductive part of the proof that \sstar{1,1} $\sim_1$ \sstar{2,2,1}.}
\label{fig:pod11EQUIV1pod221}
\end{figure}
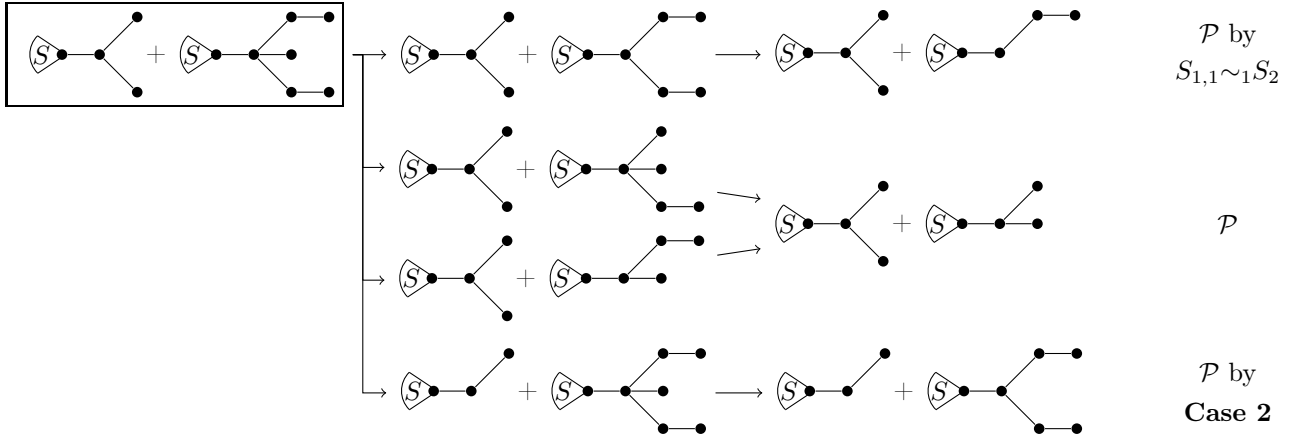

\noindent\textbf{Case 4 :} \sstar{2,1} $\sim_1$ \sstar{2,2,2}

Figure~\ref{fig:pod21EQUIV1pod222} shows the possible moves on \sstar{2,1} or \sstar{2,2,2}, and the answer leading to a $\outcomeP$-position (for readability, we write $S$ instead of $\hat{S}$ in the figure).

\begin{figure}[!h]
\centering
\begin{tikzpicture}
	\node (orig) at (0,0) {\fbox{
	\begin{tikzpicture}[scale=0.5]
		\node[noeud] (1) at (-2.1,1) {};
		\node[noeud] (1a) at (-1.1,1) {};
		\node[noeud] (1b) at (-2.1,-1) {};
		\node[noeud] (2) at (-3.1,0) {};
		\node[noeud] (2b) at (-4.1,0) {};
		\node (2c) at (-4.65,0) {$S$};
		\draw (-4,0) -- (-4.75,0.5);
		\draw (-4,0) -- (-4.75,-0.5);
		\draw (-4.75,0.5) arc (135:225:0.7);
		\draw (1) -- (2);
		\draw (2) -- (2b);
		\draw (2) -- (1b);
		\draw (1) -- (1a);
		
		\draw (-0.6,0) node {+};
		
		\draw (1,0) -- (0.25,0.5);
		\draw (1,0) -- (0.25,-0.5);
		\draw (0.25,0.5) arc (135:225:0.7);
		\node (3b) at (0.35,0) {$S$};
		\node[noeud] (3) at (1,0) {};
		\node[noeud] (4) at (2,0) {};
		\node[noeud] (6) at (3,1) {};
		\node[noeud] (5) at (3,-1) {};
		\node[noeud] (5b) at (4,-1) {};
		\node[noeud] (7) at (4,1) {};
		\node[noeud] (8) at (3,0) {};
		\node[noeud] (8b) at (4,0) {};
		\draw (3) -- (4);
		\draw (8) -- (4);
		\draw (8) -- (8b);
		\draw (4) to (6);
		\draw (4) to (5);
		\draw (6) -- (7);
		\draw (5) -- (5b);
	\end{tikzpicture}}
	};
	
	\node (removeTwoFrom21) at (5.5,0) {
	\begin{tikzpicture}[scale=0.5]
		\node (1) at (-2.1,1) {};
		\node (1a) at (-1.1,1) {};
		\node[noeud] (1b) at (-2.1,-1) {};
		\node[noeud] (2) at (-3.1,0) {};
		\node[noeud] (2b) at (-4.1,0) {};
		\node (2c) at (-4.65,0) {$S$};
		\draw (-4,0) -- (-4.75,0.5);
		\draw (-4,0) -- (-4.75,-0.5);
		\draw (-4.75,0.5) arc (135:225:0.7);
		\draw (2) -- (2b);
		\draw (2) -- (1b);
		
		\draw (-1.1,0) node {+};
		
		\draw (1,0) -- (0.25,0.5);
		\draw (1,0) -- (0.25,-0.5);
		\draw (0.25,0.5) arc (135:225:0.7);
		\node (3b) at (0.35,0) {$S$};
		\node[noeud] (3) at (1,0) {};
		\node[noeud] (4) at (2,0) {};
		\node[noeud] (6) at (3,1) {};
		\node[noeud] (5) at (3,-1) {};
		\node[noeud] (5b) at (4,-1) {};
		\node[noeud] (7) at (4,1) {};
		\node[noeud] (8) at (3,0) {};
		\node[noeud] (8b) at (4,0) {};
		\draw (3) -- (4);
		\draw (8) -- (4);
		\draw (8) -- (8b);
		\draw (4) to (6);
		\draw (4) to (5);
		\draw (6) -- (7);
		\draw (5) -- (5b);
	\end{tikzpicture}
	};
	\node (removeTwoFrom21B) at (11,0) {
	\begin{tikzpicture}[scale=0.5]
		\node (1) at (-2.1,1) {};
		\node (1a) at (-1.1,1) {};
		\node[noeud] (1b) at (-2.1,-1) {};
		\node[noeud] (2) at (-3.1,0) {};
		\node[noeud] (2b) at (-4.1,0) {};
		\node (2c) at (-4.65,0) {$S$};
		\draw (-4,0) -- (-4.75,0.5);
		\draw (-4,0) -- (-4.75,-0.5);
		\draw (-4.75,0.5) arc (135:225:0.7);
		\draw (2) -- (2b);
		\draw (2) -- (1b);
		
		\draw (-1.1,0) node {+};
		
		\draw (1,0) -- (0.25,0.5);
		\draw (1,0) -- (0.25,-0.5);
		\draw (0.25,0.5) arc (135:225:0.7);
		\node (3b) at (0.35,0) {$S$};
		\node[noeud] (3) at (1,0) {};
		\node[noeud] (4) at (2,0) {};
		\node[noeud] (6) at (3,1) {};
		\node[noeud] (5) at (3,-1) {};
		\node[noeud] (5b) at (4,-1) {};
		\node[noeud] (7) at (4,1) {};
		\draw (3) -- (4);
		\draw (4) to (6);
		\draw (4) to (5);
		\draw (6) -- (7);
		\draw (5) -- (5b);
	\end{tikzpicture}
	};
	\draw (14.5,0.25) node {$\outcomeP$ by};
	\draw (14.5,-0.25) node {\textbf{Case 2}};
	
	\node (removeOneFrom222) at (5.5,-1.5) {
	\begin{tikzpicture}[scale=0.5]
		\node[noeud] (1) at (-2.1,1) {};
		\node[noeud] (1a) at (-1.1,1) {};
		\node[noeud] (1b) at (-2.1,-1) {};
		\node[noeud] (2) at (-3.1,0) {};
		\node[noeud] (2b) at (-4.1,0) {};
		\node (2c) at (-4.65,0) {$S$};
		\draw (-4,0) -- (-4.75,0.5);
		\draw (-4,0) -- (-4.75,-0.5);
		\draw (-4.75,0.5) arc (135:225:0.7);
		\draw (1) -- (2);
		\draw (2) -- (2b);
		\draw (2) -- (1b);
		\draw (1) -- (1a);
		
		\draw (-0.6,0) node {+};
		
		\draw (1,0) -- (0.25,0.5);
		\draw (1,0) -- (0.25,-0.5);
		\draw (0.25,0.5) arc (135:225:0.7);
		\node (3b) at (0.35,0) {$S$};
		\node[noeud] (3) at (1,0) {};
		\node[noeud] (4) at (2,0) {};
		\node[noeud] (6) at (3,1) {};
		\node[noeud] (5) at (3,-1) {};
		\node (5b) at (4,-1) {};
		\node[noeud] (7) at (4,1) {};
		\node[noeud] (8) at (3,0) {};
		\node[noeud] (8b) at (4,0) {};
		\draw (3) -- (4);
		\draw (8) -- (4);
		\draw (8) -- (8b);
		\draw (4) to (6);
		\draw (4) to (5);
		\draw (6) -- (7);
	\end{tikzpicture}
	};
	\node (removeTwoFrom222) at (5.5,-3) {
	\begin{tikzpicture}[scale=0.5]
		\node[noeud] (1) at (-2.1,1) {};
		\node[noeud] (1a) at (-1.1,1) {};
		\node[noeud] (1b) at (-2.1,-1) {};
		\node[noeud] (2) at (-3.1,0) {};
		\node[noeud] (2b) at (-4.1,0) {};
		\node (2c) at (-4.65,0) {$S$};
		\draw (-4,0) -- (-4.75,0.5);
		\draw (-4,0) -- (-4.75,-0.5);
		\draw (-4.75,0.5) arc (135:225:0.7);
		\draw (1) -- (2);
		\draw (2) -- (2b);
		\draw (2) -- (1b);
		\draw (1) -- (1a);
		
		\draw (-0.6,0) node {+};
		
		\draw (1,0) -- (0.25,0.5);
		\draw (1,0) -- (0.25,-0.5);
		\draw (0.25,0.5) arc (135:225:0.7);
		\node (3b) at (0.35,0) {$S$};
		\node[noeud] (3) at (1,0) {};
		\node[noeud] (4) at (2,0) {};
		\node[noeud] (6) at (3,1) {};
		\node[noeud] (5) at (3,-1) {};
		\node[noeud] (5b) at (4,-1) {};
		\node[noeud] (7) at (4,1) {};
		\draw (3) -- (4);
		\draw (4) to (6);
		\draw (4) to (5);
		\draw (6) -- (7);
		\draw (5) -- (5b);
	\end{tikzpicture}
	};
	\node (removeFrom222) at (11,-2.25) {
	\begin{tikzpicture}[scale=0.5]
		\node[noeud] (1) at (-2.1,1) {};
		\node[noeud] (1a) at (-1.1,1) {};
		\node[noeud] (1b) at (-2.1,-1) {};
		\node[noeud] (2) at (-3.1,0) {};
		\node[noeud] (2b) at (-4.1,0) {};
		\node (2c) at (-4.65,0) {$S$};
		\draw (-4,0) -- (-4.75,0.5);
		\draw (-4,0) -- (-4.75,-0.5);
		\draw (-4.75,0.5) arc (135:225:0.7);
		\draw (1) -- (2);
		\draw (2) -- (2b);
		\draw (2) -- (1b);
		\draw (1) -- (1a);
		
		\draw (-0.6,0) node {+};
		
		\draw (1,0) -- (0.25,0.5);
		\draw (1,0) -- (0.25,-0.5);
		\draw (0.25,0.5) arc (135:225:0.7);
		\node (3b) at (0.35,0) {$S$};
		\node[noeud] (3) at (1,0) {};
		\node[noeud] (4) at (2,0) {};
		\node[noeud] (6) at (3,1) {};
		\node[noeud] (5) at (3,-1) {};
		\node[noeud] (7) at (4,1) {};
		\draw (3) -- (4);
		\draw (4) to (6);
		\draw (4) to (5);
		\draw (6) -- (7);
	\end{tikzpicture}
	};
	\draw (14.5,-2.25) node {$\outcomeP$};
	
	\node (playTo11) at (5.5,-4.5) {
	\begin{tikzpicture}[scale=0.5]
		\node[noeud] (1) at (-2.1,1) {};
		\node[noeud] (1b) at (-2.1,-1) {};
		\node[noeud] (2) at (-3.1,0) {};
		\node[noeud] (2b) at (-4.1,0) {};
		\node (2c) at (-4.65,0) {$S$};
		\draw (-4,0) -- (-4.75,0.5);
		\draw (-4,0) -- (-4.75,-0.5);
		\draw (-4.75,0.5) arc (135:225:0.7);
		\draw (1) -- (2);
		\draw (2) -- (2b);
		\draw (2) -- (1b);
		
		\draw (-0.85,0) node {+};
		
		\draw (1,0) -- (0.25,0.5);
		\draw (1,0) -- (0.25,-0.5);
		\draw (0.25,0.5) arc (135:225:0.7);
		\node (3b) at (0.35,0) {$S$};
		\node[noeud] (3) at (1,0) {};
		\node[noeud] (4) at (2,0) {};
		\node[noeud] (6) at (3,1) {};
		\node[noeud] (5) at (3,-1) {};
		\node[noeud] (5b) at (4,-1) {};
		\node[noeud] (7) at (4,1) {};
		\node[noeud] (8) at (3,0) {};
		\node[noeud] (8b) at (4,0) {};
		\draw (3) -- (4);
		\draw (8) -- (4);
		\draw (8) -- (8b);
		\draw (4) to (6);
		\draw (4) to (5);
		\draw (6) -- (7);
		\draw (5) -- (5b);
	\end{tikzpicture}
	};
	\node (playTo11B) at (11,-4.5) {
	\begin{tikzpicture}[scale=0.5]
		\node[noeud] (1) at (-2.1,1) {};
		\node[noeud] (1b) at (-2.1,-1) {};
		\node[noeud] (2) at (-3.1,0) {};
		\node[noeud] (2b) at (-4.1,0) {};
		\node (2c) at (-4.65,0) {$S$};
		\draw (-4,0) -- (-4.75,0.5);
		\draw (-4,0) -- (-4.75,-0.5);
		\draw (-4.75,0.5) arc (135:225:0.7);
		\draw (1) -- (2);
		\draw (2) -- (2b);
		\draw (2) -- (1b);
		
		\draw (-0.85,0) node {+};
		
		\draw (1,0) -- (0.25,0.5);
		\draw (1,0) -- (0.25,-0.5);
		\draw (0.25,0.5) arc (135:225:0.7);
		\node (3b) at (0.35,0) {$S$};
		\node[noeud] (3) at (1,0) {};
		\node[noeud] (4) at (2,0) {};
		\node[noeud] (6) at (3,1) {};
		\node[noeud] (5) at (3,-1) {};
		\node[noeud] (5b) at (4,-1) {};
		\node[noeud] (7) at (4,1) {};
		\node[noeud] (8) at (3,0) {};
		\draw (3) -- (4);
		\draw (8) -- (4);
		\draw (4) to (6);
		\draw (4) to (5);
		\draw (6) -- (7);
		\draw (5) -- (5b);
	\end{tikzpicture}
	};
	\draw (14.5,-4.25) node {$\outcomeP$ by};
	\draw (14.5,-4.75) node {\textbf{Case 3}};
	
	\node (playTo2) at (5.5,-6) {
	\begin{tikzpicture}[scale=0.5]
		\node[noeud] (1) at (-2.1,1) {};
		\node[noeud] (1a) at (-1.1,1) {};
		\node (1b) at (-2.1,-1) {};
		\node[noeud] (2) at (-3.1,0) {};
		\node[noeud] (2b) at (-4.1,0) {};
		\node (2c) at (-4.65,0) {$S$};
		\draw (-4,0) -- (-4.75,0.5);
		\draw (-4,0) -- (-4.75,-0.5);
		\draw (-4.75,0.5) arc (135:225:0.7);
		\draw (1) -- (2);
		\draw (2) -- (2b);
		\draw (1) -- (1a);
		
		\draw (-0.6,0) node {+};
		
		\draw (1,0) -- (0.25,0.5);
		\draw (1,0) -- (0.25,-0.5);
		\draw (0.25,0.5) arc (135:225:0.7);
		\node (3b) at (0.35,0) {$S$};
		\node[noeud] (3) at (1,0) {};
		\node[noeud] (4) at (2,0) {};
		\node[noeud] (6) at (3,1) {};
		\node[noeud] (5) at (3,-1) {};
		\node[noeud] (5b) at (4,-1) {};
		\node[noeud] (7) at (4,1) {};
		\node[noeud] (8) at (3,0) {};
		\node[noeud] (8b) at (4,0) {};
		\draw (3) -- (4);
		\draw (8) -- (4);
		\draw (8) -- (8b);
		\draw (4) to (6);
		\draw (4) to (5);
		\draw (6) -- (7);
		\draw (5) -- (5b);
	\end{tikzpicture}
	};
	\node (playTo2B) at (11,-6) {
	\begin{tikzpicture}[scale=0.5]
		\node[noeud] (1) at (-2.1,1) {};
		\node[noeud] (1a) at (-1.1,1) {};
		\node (1b) at (-2.1,-1) {};
		\node[noeud] (2) at (-3.1,0) {};
		\node[noeud] (2b) at (-4.1,0) {};
		\node (2c) at (-4.65,0) {$S$};
		\draw (-4,0) -- (-4.75,0.5);
		\draw (-4,0) -- (-4.75,-0.5);
		\draw (-4.75,0.5) arc (135:225:0.7);
		\draw (1) -- (2);
		\draw (2) -- (2b);
		\draw (1) -- (1a);
		
		\draw (-0.6,0) node {+};
		
		\draw (1,0) -- (0.25,0.5);
		\draw (1,0) -- (0.25,-0.5);
		\draw (0.25,0.5) arc (135:225:0.7);
		\node (3b) at (0.35,0) {$S$};
		\node[noeud] (3) at (1,0) {};
		\node[noeud] (4) at (2,0) {};
		\node[noeud] (6) at (3,1) {};
		\node[noeud] (5) at (3,-1) {};
		\node[noeud] (5b) at (4,-1) {};
		\node[noeud] (7) at (4,1) {};
		\node[noeud] (8) at (3,0) {};
		\draw (3) -- (4);
		\draw (8) -- (4);
		\draw (4) to (6);
		\draw (4) to (5);
		\draw (6) -- (7);
		\draw (5) -- (5b);
	\end{tikzpicture}
	};
	\draw (14.5,-5.5) node {$\outcomeP$ by};
	\draw (14.5,-6) node {\textbf{Case 3}};
	\draw (14.5,-6.5) node {since};
	\draw (14.5,-7) node {\sstar{1,1}$\sim_1$\sstar{2}};
	
	\draw (orig) -- (2.75,0) [->] (2.75,0) -- (removeTwoFrom21);
	\draw[->] (removeTwoFrom21) -- (removeTwoFrom21B);
	\draw (orig) -- (2.75,0) [->] (2.75,0) |- (removeOneFrom222);
	\draw (orig) -- (2.75,0) [->] (2.75,0) |- (removeTwoFrom222);
	\draw[->] (removeOneFrom222) -- (removeFrom222);
	\draw[->] (removeTwoFrom222) -- (removeFrom222);
	\draw (orig) -- (2.75,0) [->] (2.75,0) |- (playTo11);
	\draw[->] (playTo11) -- (playTo11B);
	\draw (orig) -- (2.75,0) [->] (2.75,0) |- (playTo2);
	\draw[->] (playTo2) -- (playTo2B);
\end{tikzpicture}
\caption{The inductive part of the proof that \sstar{2,1} $\sim_1$ \sstar{2,2,2}.}
\label{fig:pod21EQUIV1pod222}
\end{figure}
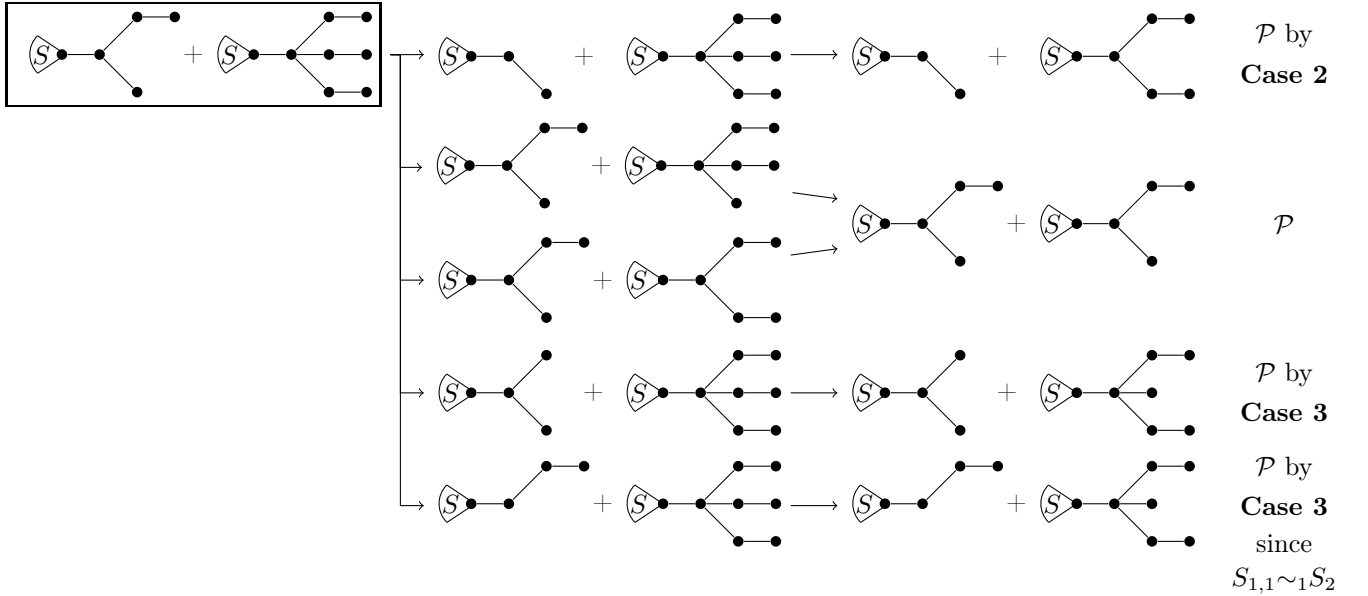
\end{proof}

\ \\

\begin{lemma}\label{lem:onestarone}
Let $S$ be a subdivided star not belonging to $\conestar\cup\ctwostar$. Then \sstar{1,1,1}$\bipa S \equiv P_1 \bipa S$.
\end{lemma}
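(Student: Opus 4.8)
By definition of $\equiv$, the plan is to show that the disjoint sum $G_S := S_{1,1,1}\bipa S + P_1\bipa S$ is a $\outcomeP$-position. I would name the first summand $B_1$: its claw-centre $\gamma$ is adjacent to three leaves $a_1,a_2,a_3$ and to the centre $c$ of $S$; the second summand is $B_2 = P_1\bipa S$, whose extra vertex $v$ is adjacent to $c$. Both summands therefore carry an identical rooted copy of $S$ hanging at $c$. The argument runs by induction on $|V(S)|$, producing for every first-player move a second-player reply to a $\outcomeP$-position. Since, by Theorem~\ref{thm:modbipodes}, neither $\grundy(S_{1,1,1}\bipa S)$ nor $\grundy(P_1\bipa S)$ changes when the paths of $S$ are reduced modulo $3$, one may assume every path of $S$ has length $1$ or $2$, which keeps the list of small cases finite.

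The engine of the proof is a pairing (Tweedledum--Tweedledee) strategy together with a single use of Lemma~\ref{lem:P3empty}. Because $S_{1,1}\bipa S$ and $S\bipa S_{1,1}$ are the same graph, Lemma~\ref{lem:P3empty} yields $\grundy(S_{1,1}\bipa S)=\grundy(S)$, so $S_{1,1}\bipa S + S$ is a $\outcomeP$-position; this is the target whenever a ``private'' vertex is touched. I would first observe that the only legal move of $B_1$ involving a claw vertex is the deletion of a single leaf $a_i$ (removing $\gamma$, or $\gamma$ together with a neighbour, disconnects $B_1$), and that it turns $B_1$ into $S_{1,1}\bipa S$; symmetrically, the only move of $B_2$ involving $v$ is the deletion of $v$ (or, when $S$ is a path, of $v$ together with $c$), and it turns $B_2$ into $S$. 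Hence a first-player deletion of a claw-leaf is answered by deleting $v$, and conversely, both positions becoming $S_{1,1}\bipa S + S$. Any other first-player move is forced to lie inside one of the two copies of $S$, and such a move $\mu$ is exactly a legal move of the game on the subdivided star $S$ by itself that does not delete $c$ (deleting $c$ disconnects $B_1$ and $B_2$ as soon as $|V(S)|\geq 2$); the second player then replays $\mu$ in the other copy, reaching $G_{S'}$, where $S'$ is the subdivided star obtained from $S$ by $\mu$. Since $|V(S')|<|V(S)|$, the induction hypothesis finishes this case — provided $S'\notin\conestar\cup\ctwostar$.

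What remains is the bookkeeping of the boundary cases, and this is where the real difficulty lies. First, the base cases: $S=\emptyset$ (then $B_1=S_{1,1,1}$ and $B_2=P_1$ both have Grundy value $1$) and $S=P_3$ in each of its two rootings $S_2$ and $S_{1,1}$, all dispatched by reading Grundy values off Figure~\ref{fig:tabgrun} together with Lemma~\ref{lem:modkpodes}. Second, the finitely many subdivided stars $S$ — after reduction modulo $3$, a short list such as $S_{2,1,1}$, $S_{2,2,1}$, $S_{2,2,2,1}$, $S_{2,2,2,2}$ — from which some move $\mu$, or (for a path-valued $S$) the extra move deleting $\{v,c\}$ in $B_2$, lands in the exceptional set $\{P_1,P_2,S_{2,1},S_{2,2},S_{2,2,2}\}$, so that plain mirroring is unavailable; for each such $S$ one must supply by hand the compensating reply — played on the claw of $B_1$, on the copy of $S$ in $B_2$, or inside the freshly created exceptional component — using Figure~\ref{fig:tabgrun}, Theorem~\ref{thm:modkpodes}, and the Grundy values of the bistars $S_{1,1,1}\bipa S_{2,1}$, $S_{1,1,1}\bipa S_{2,2}$ and $S_{1,1,1}\bipa S_{2,2,2}$, which are computed directly from their options. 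The main obstacle is precisely this finite case analysis; it is also what makes the hypothesis $S\notin\conestar\cup\ctwostar$ necessary, since already for $S=P_1$ one has $\grundy(S_{1,1,1}\bipa P_1)=\grundy(S_{1,1,1,1})=0\neq 2=\grundy(P_2)=\grundy(P_1\bipa P_1)$.
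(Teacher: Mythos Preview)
Your approach is essentially the same as the paper's: induction on $|V(S)|$, with the pairing ``claw-leaf in $B_1$ $\leftrightarrow$ $v$ in $B_2$'' landing on $S_{1,1}\bipa S + S$ (a $\outcomeP$-position by Lemma~\ref{lem:P3empty}), and mirroring moves inside $S$ to invoke the induction hypothesis. The one presentational difference is that the paper treats the six boundary stars $\emptyset$, $S_{1,1}$, $S_{2,1,1}$, $S_{2,2,1}$, $S_{2,2,2,1}$, $S_{2,2,2,2}$ purely as base cases --- it simply computes $\grundy(S_{1,1,1}\bipa S)$ and $\grundy(P_1\bipa S)$ for each and checks they agree --- whereas you phrase them as positions needing a ``compensating reply'' and then compute the same Grundy values anyway; the paper's framing is a little cleaner but the content is identical. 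Your preliminary reduction of $S$ modulo~$3$ via Theorem~\ref{thm:modbipodes} is a legitimate shortcut the paper does not spell out, and it is not circular since that theorem is established earlier.
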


\begin{proof}
We use induction on $|S|$. The base cases are the subdivided stars having an option in $\conestar\cup\ctwostar$:
\begin{enumerate}
\item $S=\emptyset$. In this case, $\grundy($\sstar{1,1,1}$\bipa S) = \grundy($\sstar{1,1,1}$) = 1 = \grundy(P_1) = \grundy(P_1 \bipa S)$.
\item $S=$\sstar{1,1}. In this case, $\grundy($\sstar{1,1,1}$\bipa S) = \grundy($\sstar{1,1,1}$)$ (by Lemma~\ref{lem:equivstar}) $= \grundy(P_1 \bipa S)$.
\item $S=$\sstar{2,1,1}. In this case, $\grundy($\sstar{1,1,1}$\bipa S) = 3 = \grundy($\sstar{2,1,1,1}$) = \grundy(P_1 \bipa S)$.
\item $S=$\sstar{2,2,1}. In this case, $\grundy($\sstar{1,1,1}$\bipa S) = \grundy($\sstar{1,1,1}$\bipa$\sstar{1,1}$)$ (by Lemma~\ref{lem:equivstar}) $= 1 = \grundy($\sstar{2,2,1,1}$) = \grundy(P_1 \bipa S)$.
\item $S=$\sstar{2,2,2,1}. In this case, $\grundy($\sstar{1,1,1}$\bipa S) = 3 = \grundy($\sstar{2,2,2,1,1}$) = \grundy(P_1 \bipa S)$.
\item $S=$\sstar{2,2,2,2}. In this case, $\grundy($\sstar{1,1,1}$\bipa S) = 1 = \grundy($\sstar{2,2,2,2,1}$) = \grundy(P_1 \bipa S)$.
\end{enumerate}
Although tedious, all these values can be computed by considering the Grundy values of the sets $\opt(S_{1,1,1}\bipa S)$ and $\opt(P_1\bipa S)$.

We now prove that if $S$ is a subdivided star not belonging to $\conestar\cup\ctwostar$ and not having an option in $\conestar\cup\ctwostar$, then \sstar{1,1,1}$\bipa S \equiv P_1 \bipa S$.
We note that the first player can neither empty $S$ nor take its central vertex. We show that for every first player's move on \sstar{1,1,1}$\bipa S + P_1 \bipa S$, the second player can always move to a $\outcomeP$-position. If the first player plays from $S$ to $S'$, then $S' \not\in \conestar\cup\ctwostar$, thus if the second player replicates the move, we can invoke the induction hypothesis. Figure~\ref{fig:onestarone} shows the case where the first player does not play on $S$, completing the proof.

\begin{figure}[H]
\centering
\begin{tikzpicture}
	\node (orig) at (0,0) {\fbox{
	\begin{tikzpicture}[scale=0.5]
		\node[noeud] (1) at (-1.1,0) {};
		\node[noeud] (2) at (-2.1,0) {};
		\draw (-2,0) -- (-2.75,0.5);
		\draw (-2,0) -- (-2.75,-0.5);
		\draw (-2.75,0.5) arc (135:225:0.7);
		\node (2b) at (-2.65,0) {$S$};
		\draw (1) -- (2);
		
		\draw (-0.4,0) node {+};
		
		\draw (1,0) -- (0.25,0.5);
		\draw (1,0) -- (0.25,-0.5);
		\draw (0.25,0.5) arc (135:225:0.7);
		\node (3b) at (0.35,0) {$S$};
		\node[noeud] (3) at (1,0) {};
		\node[noeud] (4) at (2,0) {};
		\node[noeud] (6) at (3,1) {};
		\node[noeud] (5) at (3,-1) {};
		\node[noeud] (7) at (3,0) {};
		\draw (3) to (4);
		\draw (4) to (5);
		\draw (4) to (6);
		\draw (4) to (7);
	\end{tikzpicture}}
	};
	
	\node (playOnPod0) at (4.5,0) {
	\begin{tikzpicture}[scale=0.5]
		\node[noeud] (2) at (-2.1,0) {};
		\draw (-2,0) -- (-2.75,0.5);
		\draw (-2,0) -- (-2.75,-0.5);
		\draw (-2.75,0.5) arc (135:225:0.7);
		\node (2b) at (-2.65,0) {$S$};
		
		\draw (-0.4,0) node {+};
		
		\draw (1,0) -- (0.25,0.5);
		\draw (1,0) -- (0.25,-0.5);
		\draw (0.25,0.5) arc (135:225:0.7);
		\node (3b) at (0.35,0) {$S$};
		\node[noeud] (3) at (1,0) {};
		\node[noeud] (4) at (2,0) {};
		\node[noeud] (6) at (3,1) {};
		\node[noeud] (5) at (3,-1) {};
		\node[noeud] (7) at (3,0) {};
		\draw (3) to (4);
		\draw (4) to (5);
		\draw (4) to (6);
		\draw (4) to (7);
	\end{tikzpicture}
	};
	\node (playOnPod111) at (4.5,-1.5) {
	\begin{tikzpicture}[scale=0.5]
		\node[noeud] (1) at (-1.1,0) {};
		\node[noeud] (2) at (-2.1,0) {};
		\draw (-2,0) -- (-2.75,0.5);
		\draw (-2,0) -- (-2.75,-0.5);
		\draw (-2.75,0.5) arc (135:225:0.7);
		\node (2b) at (-2.65,0) {$S$};
		\draw (1) -- (2);
		
		\draw (-0.4,0) node {+};
		
		\draw (1,0) -- (0.25,0.5);
		\draw (1,0) -- (0.25,-0.5);
		\draw (0.25,0.5) arc (135:225:0.7);
		\node (3b) at (0.35,0) {$S$};
		\node[noeud] (3) at (1,0) {};
		\node[noeud] (4) at (2,0) {};
		\node[noeud] (6) at (3,1) {};
		\node[noeud] (5) at (3,-1) {};
		\draw (3) to (4);
		\draw (4) to (5);
		\draw (4) to (6);
	\end{tikzpicture}
	};
	\node (playOnPods) at (9,-0.75) {
	\begin{tikzpicture}[scale=0.5]
		\node[noeud] (2) at (-2.1,0) {};
		\draw (-2,0) -- (-2.75,0.5);
		\draw (-2,0) -- (-2.75,-0.5);
		\draw (-2.75,0.5) arc (135:225:0.7);
		\node (2b) at (-2.65,0) {$S$};
		
		\draw (-0.4,0) node {+};
		
		\draw (1,0) -- (0.25,0.5);
		\draw (1,0) -- (0.25,-0.5);
		\draw (0.25,0.5) arc (135:225:0.7);
		\node (3b) at (0.35,0) {$S$};
		\node[noeud] (3) at (1,0) {};
		\node[noeud] (4) at (2,0) {};
		\node[noeud] (6) at (3,1) {};
		\node[noeud] (5) at (3,-1) {};
		\draw (3) to (4);
		\draw (4) to (5);
		\draw (4) to (6);
	\end{tikzpicture}
	};
	\draw (13,-0.5) node {$\outcomeP$ by};
	\draw (13,-1) node {$\emptyset \sim_1$\sstar{1,1}};
	
	\draw (orig) -- (2.25,0) [->] (2.25,0) |- (playOnPod0);
	\draw (orig) -- (2.25,0) [->] (2.25,0) |- (playOnPod111);
	\draw[->] (playOnPod0) -- (playOnPods);
	\draw[->] (playOnPod111) -- (playOnPods);
\end{tikzpicture}
\caption{The inductive part of the proof for Lemma~\ref{lem:onestarone}.}
\label{fig:onestarone}
\end{figure}
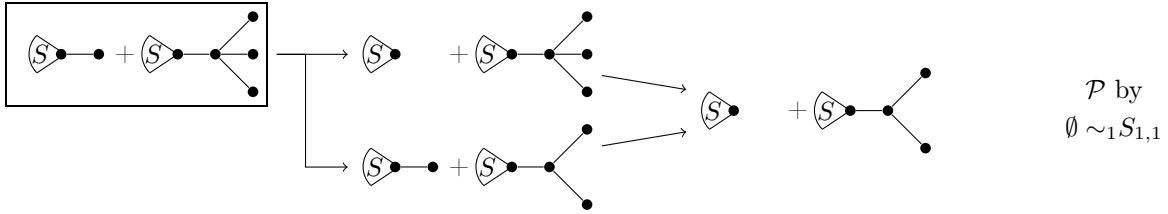
\end{proof}

\begin{proof}[Proof of Theorem~\ref{thm:equiv1}]

We prove by induction on the total number of vertices of $S$ and $S'$ that if $S$ and $S'$ are in the same set $\mathcal C_0$, $\mathcal C_1$, $\conestar$, $\mathcal C_2$, $\ctwostar$, $\ctwobox$,  $\mathcal C_3$ or $\cthreebox$, then they are $\sim_1$-equivalent.

By Lemma~\ref{lem:equivstar}, this is true if $S$ and $S'$ are in $\conestar$ or in $\ctwostar$. This is also true if $\{S,S'\}=\{\emptyset,$\sstar{1,1}$\}$ by Lemma~\ref{lem:P3empty} or if  $\{S,S'\}=\{\emptyset,P_3\}$ since it is the same as attaching a $P_3$ to the central vertex of a subdivided star.

Furthermore, one can check that the rows and columns for $\conestar$ and $\ctwostar$ in Table~\ref{tab:prod1} are correct. For that, it suffices to prove it for one representant of $\conestar$ ($P_1$) and one representant of $\ctwostar$ ($P_2$). Attaching $P_1$ to any subdivided star $\hat{S}$ results in the subdivided star listed directly under $\hat{S}$ in Figure~\ref{fig:tabpos}, while attaching $P_2$ results in the subdivided star listed diagonally to the right and below. For example, for \sstar{1,1,2,2}, attaching $P_1$ results in \sstar{1,1,1,2,2}, while attaching $P_2$ results in \sstar{1,1,2,2,2}. Comparing the Grundy values of the individual stars and the resulting bistar, one can verify that the table columns for $\conestar$ and $\ctwostar$ are correct. Identifying the elements of the various sets in Figure~\ref{fig:tabEquivSim1}, we see that below any element of $\conestar$ is a bisected star with Grundy value 2, and similarly, below any element of $\ctwostar$ is a bisected star with Grundy value 0. All other stars are either part of a 0-1 pattern (going down the columns), or part of a 2-3 pattern, which fits the computation of the Grundy values via the nim-sum, since $3 \oplus 1 = 2$. This verifies the result for the $\conestar$ column. Likewise, one can verify the column $\ctwostar$.% corresponds to going down in the table of Figure~\ref{fig:tabgrun}: there are always a $1$ under a $0$, a $2$ under an element of $\conestar$ and a $0$ under the other 1, a $2$ under a $3$ and a $3$ under a $2$ except for $\ctwostar$ where the value under 2 is $0$. This corresponds exactly to the line of $\conestar$ of Table~\ref{tab:prod1}. Similarly, adding $P_2$ corresponds to going down right in the table (in diagonal) and one can check that the grundy values are correct. Thus we are done. %Hence if $S_1$ and $S_2$ are in the same potential equivalence class and $S\in \conestar\cup\ctwostar$, $\grundy(S_1\bipa S)=\grundy(S_2\bipa S)$.

Suppose now that $S$ and $S'$ belong to the same set $C$, with $C\neq \conestar$ and $C\neq \ctwostar$. Thus both $S$ and $S'$ are either empty or not a path. We prove by induction on the size of $\hat{S}$ that $S\bipa \hat{S} \equiv S'\bipa \hat{S}$ for any subdivided star $\hat{S}$. This is true if $\hat{S}=\emptyset$ (since $\grundy(S)=\grundy(S')$) or if $\hat{S}\in \conestar\cup \ctwostar$  (as discussed before).
Hence we can assume that $\hat{S}\notin \conestar\cup\ctwostar$ and $\hat{S}$ is not a path. We will prove that $S\bipa \hat{S} + S'\bipa \hat{S}$ is a $\outcomeP$-position. The first player cannot play both in $S$ and $\hat{S}$ nor both in $S'$ and $\hat{S}$ since $\hat{S}$ is not a path. If the first player plays in $\hat{S}$, leading to $\hat{S}'$ in one of the two games, the first player cannot take the central vertex (since $\hat{S}$ is not a path). Hence the second player can reply to $S\bipa \hat{S}' + S'\bipa \hat{S}'$ which is a $\outcomeP$-position by induction hypothesis.
Otherwise, the first player plays in $S$ or in $S'$. By symmetry, we can assume that the first player plays in $S$, leading to a game $T\bipa \hat{S} + S'\bipa \hat{S}$. We have to find an answer from that game to a $\outcomeP$-position.

\begin{enumerate}[{\it (i)}]
	\item If there is a move from $T$ to $T'$ with $T'$ in the same set as $S$, then the second player plays to $T'\bipa \hat{S} + S'\bipa \hat{S}$ (this is always possible since if the move from $T$ to $T'$ is taking the central vertex and $T'$ is not empty, it means that $T$ is a path which is neither $P_3$ nor $P_4$, a contradiction). By induction, $T\bipa \hat{S} + S'\bipa \hat{S}$ is a $\outcomeP$-position.
	
	\item If there is a move from $S'$ to $T'$ with $T$ and $T'$ in the same set, then the second player plays to $T\bipa \hat{S}+ T'\bipa \hat{S}$ (again, this is always possible since $S'$ is not a path), which is a $\outcomeP$-position by induction hypothesis.
\end{enumerate}

Assume that none of these two cases occurs. If $\grundy(S)=3$ then we are in case {\em (ii)}. If $\grundy(S)=0$ then we are in case {\em (i)}. Hence we have $\grundy(S)\in \{1,2\}$.
If $\grundy(S)=1$, then $S,S'\in \mathcal C_1$. If $\grundy(T)=0$ then we are in case {\em (ii)}. Otherwise, $\grundy(T)>1$, and there is always a move from $T$ to $T'\in \mathcal C_1$ and we are in case {\em (ii)}.
Hence $\grundy(S)=2$. If $\grundy(T)=0$ or if $T\in \mathcal C_1$, then we are in case {\em (ii)}. If $\grundy(T)=3$, we are in case {\em (i)}. Hence the only remaining case is $T\in \conestar$. Then there is a move from $S'$ to $T'$ with $T'\in \mathcal C_1$. By induction, $T'\bipa \hat{S} \equiv $\sstar{1,1,1}$\bipa \hat{S}$ (indeed, the number of vertices in $S$ and $S'$ is strictly greater than the number of vertices in $T'$ and \sstar{1,1,1} since $S$ has at least five vertices). By Lemma~\ref{lem:onestarone}, \sstar{1,1,1}$\bipa \hat{S} \equiv P_1\bipa \hat{S} \equiv T\bipa \hat{S}$ (since $\hat{S}\notin \conestar\cup \ctwostar$). Thus $T\bipa \hat{S} \equiv T'\bipa \hat{S}$ and $T\bipa \hat{S}+ T' \bipa \hat{S}$ is a $\outcomeP$-position.

To compute Table~\ref{tab:prod1}, it is enough to consider one representant of each class, for instance $\emptyset$, $P_1$, $P_2$, \sstar{1,1,1}, \sstar{2,1,1}, \sstar{2,1,1,1}, \sstar{2,2,2,2,2}, \sstar{2,2,2,2,2,1}, respectively, and compute their Grundy value.
\end{proof}

\subsubsection{When the middle path is of length 2}

The situation in that case will be more complicated than in the previous case.
We similarly define an equivalence relation $\sim_2$. Let $S$ and $S'$ be two subdivided stars. We say that $S$ and $S'$ are $\sim_2$-equivalent, denoted $S \sim_2 S'$, if and only if for any subdivided star $\hat{S}$, $S\tripa \hat{S} \equiv S' \tripa \hat{S}$.

By Lemma~\ref{lem:P3onevertex}, we already know that $P_3\sim_2\emptyset$, and thus $S_2 \sim_2 \emptyset$ and \sstar{1,1}$\sim_2\emptyset$.

We will prove that there are exactly ten equivalence classes for $\sim_2$:
\begin{itemize}
\item $\dzerostar$: subdivided stars $S$ such that $\grundy(S)=0$ and $S$ contains zero or two paths of length $2$, plus \sstar{2};
\item $\donestar=\{P_1,$\sstar{2,1},\sstar{2,2,2}$\}$ (these stars have Grundy value 1);
\item $\donebox$: subdivided stars $S$ such that $\grundy(S)=1$, $S$ contains zero or two paths of length $2$ and $S \neq P_1$;
\item $\dtwostar=\{P_2$,\sstar{2,2}$\}$ (these stars have Grundy value 2);
\item $\dtwobox$: subdivided stars $S$ such that $\grundy(S)=2$ and $S$ contains one or three paths of length $2$;
\item $\dthreebox$: subdivided stars $S$ such that $\grundy(S)=3$ and $S$ contains one or three paths of length $2$;
\item For $i\in \{0,1,2,3\}$, $\mathcal D_i$: subdivided stars $S$ with $\grundy(S)=i$ and $S$ is not in a previous class.
\end{itemize}
Figure~\ref{fig:tabEquivSim2} shows the equivalence classes of the subdivided stars.

\begin{figure}[!h]
	\centering
	\begin{tikzpicture}
	\draw [->,thick] (-1,1.5) -- (-1,-8.5);
	\draw [->,thick] (-1,1.5) -- (8.5,1.5);
	
	\draw (3.75,2.4) node {Number of paths of length 2 in the subdivided star};
	\draw (-2.3,-3.5) node[rotate=90]  {Number of paths in the subdivided star};
	
	\draw (-1.2,0) node {0};
	\draw (-1.2,-1) node {1};
	\draw (-1.2,-2) node {2};
	\draw (-1.2,-3) node {3};
	\draw (-1.2,-4) node {4};
	\draw (-1.2,-5) node {5};
	\draw (-1.5,-6) node {\ldots};
	\draw (-1.5,-7) node {$2p$};
	\draw (-1.5,-8) node {$2p+1$};
	
	\draw (0,1.8) node {0};
	\draw (1,1.8) node {1};
	\draw (2,1.8) node {2};
	\draw (3,1.8) node {3};
	\draw (4,1.8) node {4};
	\draw (5,1.8) node {5};
	\draw (6,1.8) node {\ldots};
	\draw (7,1.8) node {$2p$};
	\draw (8,1.8) node {$2p+1$};
	
	\node (empty) at (0,1) {$0^*$};
	\node (p1) at (0,0) {$1^*$};
	
	\node (p2) at (0,-1) {$2^*$};
	\node (p3) at (1,-1) {$0^*$};
	
	\node (p3b) at (0,-2) {$0^*$};
	\node (p4) at (1,-2) {$1^*$};
	\node (p5) at (2,-2) {$2^*$};
	
	\node (s111) at (0,-3) {$1^\Box$};
	\node (s112) at (1,-3) {$2^\Box$};
	\node (s122) at (2,-3) {$0^*$};
	\node (s222) at (3,-3) {$1^*$};
	
	\node (s1111) at (0,-4) {$0^*$};
	\node (s1112) at (1,-4) {$3^\Box$};
	\node (s1122) at (2,-4) {$1^\Box$};
	\node (s1222) at (3,-4) {$2^\Box$};
	\node (s2222) at (4,-4) {0};
	
	\node (s11111) at (0,-5) {$1^\Box$};
	\node (s11112) at (1,-5) {$2^\Box$};
	\node (s11122) at (2,-5) {$0^*$};
	\node (s11222) at (3,-5) {$3^\Box$};
	\node (s12222) at (4,-5) {1};
	\node (s22222) at (5,-5) {2};
	
	\draw [->] (p1) to (empty);
	\draw [->] (p2) to[out=120, in=-120] (empty);
	\draw [->] (p3b) to[out=120, in=-120] (p1);
	\draw [->] (p2) to (p1);
	\draw [->] (p3) to (p1);
	\draw [->] (p3) to (p2);
	\draw [->] (p3b) to (p2);
	\draw [->] (p4) to (p3b);
	\draw [->] (p4) to (p2);
	\draw [->] (p4) to (p3);
	\draw [->] (p5) to (p3);
	\draw [->] (p5) to (p4);
	\draw [->] (s111) to (p3b);
	\draw [->] (s112) to (p3b);
	\draw [->] (s112) to (p4);
	\draw [->] (s112) to (s111);
	\draw [->] (s122) to (p5);
	\draw [->] (s122) to (p4);
	\draw [->] (s122) to (s112);
	\draw [->] (s222) to (p5);
	\draw [->] (s222) to (s122);
	\draw [->] (s1111) to (s111);
	\draw [->] (s1112) to (s111);
	\draw [->] (s1112) to (s112);
	\draw [->] (s1112) to (s1111);
	\draw [->] (s1122) to (s112);
	\draw [->] (s1122) to (s122);
	\draw [->] (s1122) to (s1112);
	\draw [->] (s1222) to (s122);
	\draw [->] (s1222) to (s222);
	\draw [->] (s1222) to (s1122);
	\draw [->] (s2222) to (s222);
	\draw [->] (s2222) to (s1222);
	\draw [->] (s11111) to (s1111);
	\draw [->] (s11112) to (s1111);
	\draw [->] (s11112) to (s1112);
	\draw [->] (s11112) to (s11111);
	\draw [->] (s11122) to (s1112);
	\draw [->] (s11122) to (s1122);
	\draw [->] (s11122) to (s11112);
	\draw [->] (s11222) to (s1122);
	\draw [->] (s11222) to (s1222);
	\draw [->] (s11222) to (s11122);
	\draw [->] (s12222) to (s1222);
	\draw [->] (s12222) to (s2222);
	\draw [->] (s12222) to (s11222);
	\draw [->] (s22222) to (s2222);
	\draw [->] (s22222) to (s12222);
	
	\draw (0,-7) node {$0^*$};
	\draw (1,-7) node {$3^\Box$};
	\draw (2,-7) node {$1^\Box$};
	\draw (3,-7) node {$2^\Box$};
	\draw (4,-7) node {$0$};
	\draw (5,-7) node {$3$};
	\draw (6,-7) node {\ldots};
	\draw (7,-7) node {$0$};
	
	\draw (0,-8) node {$1^\Box$};
	\draw (1,-8) node {$2^\Box$};
	\draw (2,-8) node {$0^*$};
	\draw (3,-8) node {$3^\Box$};
	\draw (4,-8) node {$1$};
	\draw (5,-8) node {$2$};
	\draw (6,-8) node {\ldots};
	\draw (7,-8) node {$1$};
	\draw (8,-8) node {$2$};
	
	\draw [<-] (0.2,-7) -- (0.8,-7);
	\draw [<-] (1.2,-7) -- (1.8,-7);
	\draw [<-] (2.2,-7) -- (2.8,-7);
	\draw [<-] (3.2,-7) -- (3.8,-7);
	\draw [<-] (4.2,-7) -- (4.8,-7);
	
	\draw [<-] (0.2,-8) -- (0.8,-8);
	\draw [<-] (1.2,-8) -- (1.8,-8);
	\draw [<-] (2.2,-8) -- (2.8,-8);
	\draw [<-] (3.2,-8) -- (3.8,-8);
	\draw [<-] (4.2,-8) -- (4.8,-8);
	\draw [<-] (7.2,-8) -- (7.8,-8);
	
	\draw [<-] (0,-7.2) -- (0,-7.8);
	\draw [<-] (1,-7.2) -- (1,-7.8);
	\draw [<-] (2,-7.2) -- (2,-7.8);
	\draw [<-] (3,-7.2) -- (3,-7.8);
	\draw [<-] (4,-7.2) -- (4,-7.8);
	\draw [<-] (5,-7.2) -- (5,-7.8);
	\draw [<-] (7,-7.2) -- (7,-7.8);
	
	\draw [<-] (0.2,-7.2) -- (0.8,-7.8);
	\draw [<-] (1.2,-7.2) -- (1.8,-7.8);
	\draw [<-] (2.2,-7.2) -- (2.8,-7.8);
	\draw [<-] (3.2,-7.2) -- (3.8,-7.8);
	\draw [<-] (4.2,-7.2) -- (4.8,-7.8);
	\draw [<-] (7.2,-7.2) -- (7.8,-7.8);
	\end{tikzpicture}
	\caption{First six rows, and rows $2p$ and $2p+1$, of the table of equivalence classes for $\sim_2$ of the subdivided stars. Stars belonging to resp. $\dzerostar$, $\donestar$, $\donebox$, $\dtwostar$, $\dtwobox$, $\dthreebox$ are depicted by resp. $0^*$, $1^*$, $1^\Box$, $2^*$, $2^\Box$, $3^\Box$, while the $\mathcal D_i$'s are depicted by $i$.}
	\label{fig:tabEquivSim2}
\end{figure}

\begin{theorem}\label{thm:equiv2}
The equivalence classes for $\sim_2$ are exactly the sets $\mathcal D_0$, $\dzerostar$, $\mathcal D_1$, $\donestar$, $\donebox$, $\mathcal D_2$, $\dtwostar$, $\dtwobox$,  $\mathcal D_3$ and $\dthreebox$. Moreover, Table~\ref{tab:prod2} describes how the Grundy value of $S\tripa S'$ can be computed depending on the equivalence class of $S$ and $S'$.
\end{theorem}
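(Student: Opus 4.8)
The plan is to follow the blueprint of the proof of Theorem~\ref{thm:equiv1}, working with the operator $\tripa$ in place of $\bipa$ and with Lemma~\ref{lem:P3onevertex} in place of Lemma~\ref{lem:P3empty}, while carrying along the two extra equivalence classes. As in Lemma~\ref{lem:equivstar}, the first task is to collapse each merged class: for each of $\dzerostar$, $\donestar$, $\donebox$, $\dtwostar$, $\dtwobox$ and $\dthreebox$ I would prove that any two of its members $S$ and $S'$ satisfy $S \sim_2 S'$, that is, $\hat S \tripa S \equiv \hat S \tripa S'$ for every subdivided star $\hat S$. Each such statement is proved by induction on $|V(\hat S)|$, exhibiting for every first-player move on $\hat S \tripa S + \hat S \tripa S'$ a second-player reply to a $\outcomeP$-position: when the first player plays inside $\hat S$ without emptying it, the second player replicates the move and invokes the induction hypothesis; the base cases $|V(\hat S)| \le 2$ reduce to small subdivided stars and bistars whose Grundy values are available from Figure~\ref{fig:tabgrun}, Lemma~\ref{lem:grundyStars} and Theorem~\ref{thm:equiv1}; and there remain the finitely many move types acting on $S$ or on $S'$, which is a routine but tedious case check, of the same flavour as the figures accompanying Lemma~\ref{lem:equivstar}.

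Second, I would establish a helper lemma playing the role of Lemma~\ref{lem:onestarone}: for every subdivided star $S$ outside a short explicit list of classes, $S_{1,1,1} \tripa S \equiv P_1 \tripa S$, together with the analogous identities needed to link the remaining classes of equal Grundy value. This is again proved by induction on $|V(S)|$, with base cases the stars whose options meet the forbidden list; the inductive step uses that this list is closed under the moves that can occur, so replicating the first player's move on $S$ keeps us outside it.

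Third comes the verification that the rows and columns of Table~\ref{tab:prod2} indexed by the special classes $\dzerostar$, $\donestar$, $\donebox$, $\dtwostar$, $\dtwobox$, $\dthreebox$ are correct. For $\donestar$ and $\dtwostar$ this is done, exactly as in the length-$1$ case, with the representatives $P_1$ and $P_2$: attaching $P_1 \tripa$ or $P_2 \tripa$ to a reduced star $\hat S$ yields a reduced star read off from the table of Figure~\ref{fig:tabpos}, so the corresponding entries follow from Figures~\ref{fig:tabgrun} and~\ref{fig:tabEquivSim2}. For the boxed classes one picks a small representative and computes the Grundy values of its options directly, using the helper lemma for those options that change class.

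Finally, for $S$ and $S'$ in the same class $C$ with $C$ none of the six special classes, I would prove $S \tripa \hat S \equiv S' \tripa \hat S$ by induction on $|V(\hat S)|$, reproducing the endgame of the proof of Theorem~\ref{thm:equiv1}: one reduces to $\hat S$ neither in a special class nor a path, lets the first player move inside $S$ to some $T$, and seeks a reply. If $\grundy(S)=0$ there is a move from $T$ to the class of $S$; if $\grundy(S)=3$ there is a move from $S'$ to the class of $T$; and if $\grundy(S)\in\{1,2\}$ the delicate subcases are those where $T$ lands in one of $\donestar$, $\donebox$, $\dtwostar$, $\dtwobox$, which are settled using the helper lemma and the induction hypothesis (applied to strictly fewer vertices). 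Table~\ref{tab:prod2} is then filled in by evaluating $\grundy(R \tripa R')$ for one representative $R$ and $R'$ of each of the ten classes. I expect the main obstacle to be precisely this last case analysis: with ten classes in play, and with the length-$2$ middle path permitting the extra manoeuvre of emptying one star and then taking the middle vertex, the bookkeeping of which classes are reachable is considerably heavier than in Theorem~\ref{thm:equiv1}, and one must check carefully that the helper lemma applies, that is, that the relevant $\hat S$ avoids the forbidden classes and that each appeal to induction is on a strictly smaller instance.
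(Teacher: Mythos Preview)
Your plan follows the blueprint of Theorem~\ref{thm:equiv1}, but the paper deliberately abandons that blueprint here. Its proof does not first establish the equivalence classes and then compute the table; instead it proves directly, by a single induction on $|V(S)|+|V(S')|$, that the Grundy value of $S\tripa S'$ is the entry of Table~\ref{tab:prod2}. Only the two finite classes $\donestar$ and $\dtwostar$ are collapsed beforehand (Lemma~\ref{lem:equivstar2}, the analogue of Lemma~\ref{lem:equivstar}); for every other pair of classes one lists, from Figure~\ref{fig:tabEquivSim2}, the possible option patterns of $S$ and of $S'$, applies the induction hypothesis to each option, and checks that the resulting $\mex$ is independent of which option pattern occurred. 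There is no analogue of Lemma~\ref{lem:onestarone} and no separate argument that elements of the same class are $\sim_2$-equivalent: that equivalence is read off a posteriori from the table. With eight remaining classes this gives $36$ symmetric cases, each a short $\mex$ computation.

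Your first task, as stated, has a structural gap. The sets $\dzerostar$, $\donebox$, $\dtwobox$, $\dthreebox$ are infinite, and their members have options lying in one another (see the option patterns in Figure~\ref{fig:optionsT}). Your proposed proof that $S\sim_2 S'$ for $S,S'\in\dzerostar$ is by induction on $|V(\hat S)|$ alone; but when the first player moves inside $S$ to some $T\in\donebox$, the reply you need---moving $S'$ to some $T'\in\donebox$---requires $T\sim_2 T'$, which is a collapse you have not yet established and which your induction on $\hat S$ does not provide. These four collapses are therefore interdependent and cannot be proved by separate inductions on $\hat S$; they would have to be woven into a single induction on the total size, exactly as the infinite classes $\ctwobox$, $\cthreebox$ are handled inside the main induction of Theorem~\ref{thm:equiv1}, not in Lemma~\ref{lem:equivstar}. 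The paper's direct $\mex$ approach sidesteps this entirely and also dispenses with any helper lemma, at the cost of a longer but entirely mechanical case check.
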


\begin{table}
\begin{center}
\begin{math}
\begin{array}{c|c|c|c|c|c|c|c|c|c|c}
&\mathcal D_0 & \dzerostar & \mathcal D_1 & \donestar & \donebox & \mathcal D_2 & \dtwostar & \dtwobox & \mathcal D_3 & \dthreebox \\ \hline
\mathcal D_0 & \nimsum & \nimsum_1 & \nimsum & 2 & \nimsum_1 & \nimsum & 0 & \nimsum_1 & \nimsum & \nimsum_1 \\ \hline
\dzerostar & \nimsum_1 & \nimsum_1 & \nimsum_1 & 2 & \nimsum_1 & \nimsum_1 & 0 & \nimsum_1 & \nimsum_1 & \nimsum_1 \\ \hline
\mathcal D_1 & \nimsum & \nimsum_1 & \nimsum & 3 & \nimsum_1 & \nimsum & 1 & \nimsum_1 & \nimsum & \nimsum_1 \\ \hline
\donestar & 2 & 2 & 3 & 0 & 3 & 0 & 1 & 1 & 1 & 0  \\ \hline
\donebox & \nimsum_1 & \nimsum_1 & \nimsum_1 & 3 & \nimsum_1 & \nimsum_1 & 1 & \nimsum_1 & \nimsum_1 & \nimsum_1 \\ \hline
\mathcal D_2 & \nimsum & \nimsum_1 & \nimsum & 0 & \nimsum_1 & \nimsum & 2 & \nimsum_1 & \nimsum &  \nimsum_1 \\ \hline
\dtwostar & 0 & 0 & 1 & 1 & 1 & 2 & 2 & 2 & 3 & 3 \\ \hline
\dtwobox & \nimsum_1 & \nimsum_1 & \nimsum_1 & 1 & \nimsum_1 & \nimsum_1 & 2 & 0 & \nimsum_1 & 1 \\ \hline
\mathcal D_3 & \nimsum & \nimsum_1 & \nimsum & 1 & \nimsum_1 & \nimsum & 3 & \nimsum_1 & \nimsum & \nimsum_1 \\ \hline
\dthreebox & \nimsum_1 & \nimsum_1 & \nimsum_1 & 0 & \nimsum_1 & \nimsum_1 & 3 & 1 & \nimsum_1 & 0 \\ \hline
\end{array}
\end{math}
\end{center}
\caption[prod2]{Computing the Grundy value of $S \tripa S'$ depending on the equivalence class of $S$ and $S'$. Recall that $\nimsum$ denotes the nim-sum. Moreover, $x \nimsum_1 y$ stands for $x \nimsum y \nimsum 1$.}
\label{tab:prod2}
\end{table}

Notice that when the two subdivided stars are of sufficiently large order, they are in the classes $\mathcal D_0,\mathcal D_1,\mathcal D_2,\mathcal D_3$, and the Grundy value of the bistar is given by the nim-sum of the Grundy values of the two stars. For most of the smallest subdivided stars, $\grundy(S\tripa S') = \grundy(S) \oplus \grundy(S') \oplus 1$.

%In the previous subsection, we proved the validity of the equivalence classes by induction and deduced the table afterwards. Here, we work by inducing the table and the equivalence classes, and proving by brute force that the Grundy value of a bistar composed of two stars from two given equivalence classes is given by the table.

The following lemma proves the equivalence for $\donestar$ and $\dtwostar$. Its proof is not included, since it is similar to the proof of Lemma~\ref{lem:equivstar}.

\begin{lemma}
\label{lem:equivstar2}
We have:
\begin{enumerate}
\item $P_1 \sim_2$ \sstar{2,1}
\item $P_2 \sim_2$ \sstar{2,2}
\item \sstar{1,1} $\sim_2$ \sstar{2,2,1}
\item \sstar{2,1} $\sim_2$ \sstar{2,2,2}.
\end{enumerate} 
Therefore, any two elements in $\donestar$ (resp. $\dtwostar$) are $\sim_2$-equivalent.
\end{lemma}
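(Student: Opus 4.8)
The plan is to prove the four $\sim_2$-equivalences by following the proof of Lemma~\ref{lem:equivstar} almost verbatim, replacing the middle edge $\bipa$ by the middle path $\tripa$ of length~$2$ and replacing every appeal to Lemma~\ref{lem:P3empty} by an appeal to Lemma~\ref{lem:P3onevertex}. Concretely, for each claimed equivalence $S \sim_2 S'$ I would show that for every subdivided star $\hat S$ the position $\hat S \tripa S + \hat S \tripa S'$ is a $\outcomeP$-position, arguing by induction on $|\hat S|$. As in Lemma~\ref{lem:equivstar}, the only moves that require real work are those played inside $S$, inside $S'$, or on the middle path: any move the first player makes strictly inside $\hat S$ (without touching its central vertex, which is impossible once $\hat S$ is large) can be copied by the second player on the other component, and the induction hypothesis then finishes the job.

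For the base cases, when $|\hat S|\le 2$ the first player may remove the central vertex of $\hat S$, and then both $\hat S\tripa S$ and $\hat S\tripa S'$ degenerate to subdivided stars (or to one of the very small bistars already treated), whose Grundy values can be read off Figure~\ref{fig:tabgrun}; together with the reductions $P_3\sim_2\emptyset$, $S_2\sim_2\emptyset$, $S_{1,1}\sim_2\emptyset$ coming from Lemma~\ref{lem:P3onevertex} and Theorem~\ref{thm:modkpodes}, these finitely many cases can be checked directly. When $\hat S=P_m$ with $m\ge 3$ I would instead use Observation~\ref{lem:keylemma}: the second player answers the first player's move by its mirror image across the center, so after both moves the two sides are again isomorphic and the induction hypothesis applies.

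The inductive step is a case analysis on the first player's move on $S$ or $S'$, reproducing Figures~\ref{fig:pod0EQUIV1pod21}--\ref{fig:pod21EQUIV1pod222} with $\bipa$ replaced by $\tripa$: Case~1 handles $P_1\sim_2 S_{2,1}$ using only $\emptyset\sim_2 P_3$, $\emptyset\sim_2 S_{1,1}$, $\emptyset\sim_2 S_2$ and the path formula of Proposition~\ref{prop:033pathsAndCycles}; Cases~2, 3 and 4 then bootstrap $P_2\sim_2 S_{2,2}$, $S_{1,1}\sim_2 S_{2,2,1}$ and $S_{2,1}\sim_2 S_{2,2,2}$ off the preceding cases exactly as before. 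The final sentence of the statement is then immediate by transitivity: $\donestar=\{P_1,S_{2,1},S_{2,2,2}\}$ is a single $\sim_2$-class by items~1 and~4, and $\dtwostar=\{P_2,S_{2,2}\}$ by item~2.

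The main new obstacle compared with the length-$1$ case is the extra vertex on the middle path. In $\hat S\tripa S$ it may happen that one of $S,S'$ gets completely removed, after which the surviving middle vertex becomes available for play; such positions simply do not occur in the $\bipa$ analysis, and disposing of them is exactly what requires the identities packaged in Lemma~\ref{lem:P3onevertex} (and occasionally Lemma~\ref{lem:P3empty}). One also has to be slightly more careful that the ``replicate on the other side'' reply is legal --- it can fail only when the first player's move touches a central vertex or the middle vertex, which is precisely why those situations are peeled off into the base cases. Beyond that, the proof is a bounded but tedious verification of small cases against Figures~\ref{fig:tabgrun} and~\ref{fig:tabEquivSim2}, with no conceptual surprises, which is why it is safe to omit in full.
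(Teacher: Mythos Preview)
Your proposal is correct and matches the paper's own approach exactly: the paper omits the proof entirely, stating only that ``it is similar to the proof of Lemma~\ref{lem:equivstar},'' and your outline describes precisely that adaptation --- replacing $\bipa$ by $\tripa$, invoking Lemma~\ref{lem:P3onevertex} in place of Lemma~\ref{lem:P3empty}, and otherwise reproducing the induction and case analysis of Figures~\ref{fig:pod0EQUIV1pod21}--\ref{fig:pod21EQUIV1pod222}. Your remark about the extra middle vertex being the only genuinely new wrinkle is apt and is exactly why Lemma~\ref{lem:P3onevertex} was proved beforehand.
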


We can now prove Theorem~\ref{thm:equiv2}:

\begin{proof}[Proof of Theorem~\ref{thm:equiv2}]
Rather than proving the validity of equivalence classes and then deducing the table, we prove by induction on the total number of vertices in $S$ and $S'$ that the Grundy value of $S \tripa S'$ is given by Table~\ref{tab:prod2}.

One can check that the rows and columns for $\donestar$ and $\dtwostar$ in Table~\ref{tab:prod2} are correct: it suffices to prove it for one representant for $\donestar$ (say $P_1$) and for $\dtwostar$ (say $P_2$). This is possible since if $S,S' \in \donestar,\dtwostar$, then they are $\sim_2$-equivalent by Lemma~\ref{lem:equivstar2}.
For any subdivided star $\hat{S}$, $\hat{S} \tripa P_1$ is $\hat{S}$ with a path of length 2 attached to its central vertex. Thus, for every class, we only need to look at the value diagonally to the right and below in Figure~\ref{fig:tabgrun}. One can check that if $\grundy(\hat{S})=0$, then $\grundy(\hat{S} \tripa P_1)=2$, if $\hat{S} \in \donestar,\mathcal D_2, \dthreebox$, then $\grundy(\hat{S} \tripa P_1)=0$, if $\hat{S} \in \mathcal D_1,\donebox$, then $\grundy(\hat{S} \tripa P_1)=3$, if $\hat{S} \in \dtwostar,\dtwobox,\mathcal D_3$, then $\grundy(\hat{S} \tripa P_1)=1$.
For any subdivided star $\hat{S}$, $\hat{S} \tripa P_2$ is $\hat{S}$ with a path of length 3 attached to its central vertex. Thus, $\grundy(\hat{S} \tripa P_2) = \grundy(\hat{S})$.

Now we study the Grundy value of $S \tripa S'$ depending on the class of $S$ and $S'$. We can suppose that $S,S' \not\in \donestar,\dtwostar$, and that neither $S$ nor $S'$ are \sstar{1,1} or $P_3$ (since, by Lemma~\ref{lem:P3onevertex}, \sstar{1,2}$\sim_2 \emptyset$; and $P_3 \sim_2 \emptyset$ by Lemma~\ref{lem:modkpodes}).
We can find the Grundy values of the options of $S$ and $S'$ thanks to Figure~\ref{fig:tabEquivSim2}. None of the options of $S$ and $S'$ involves taking their central vertex. We can verify Table~\ref{tab:prod2} by computing the Grundy value of $S \tripa S'$ depending on the Grundy values of their options, by using the induction hypothesis:
\begin{center}
$\grundy(S \tripa S') = \mex( \grundy( T \tripa S' ) , \grundy( S \tripa T' ) | T$ option of $S$, $T'$ option of $S' )$
\end{center}
In order to prove that the equivalence classes are correct, we need to check that the Grundy value of $S \tripa S'$ does not change with the classes of the options of $S$ and $S'$. Indeed, two subdivided stars belonging to the same class can have different options.

We will prove two cases, the other ones being similar.

\noindent\textbf{Case 1:} $S \in \mathcal D_1$ and $S' \in \mathcal D_3$

In this case, $S$ always has three different options, but these options are not the same depending on $S$. $S$ always has an option in $\mathcal D_0$, and it can have two options either in $\dtwobox$ and $\dthreebox$ or in $\mathcal D_2$ and $\mathcal D_3$.
$S'$ has three options, which are in $\mathcal D_1,\mathcal D_2$ and $\mathcal D_3$.

These possible options of $S$ and $S'$ are shown in Figure~\ref{fig:t1tripat2-c1c3}. On the left are the possible options of $S$, and on the right are the possible options of $S'$. The notation $\mathcal D_i \tripa \mathcal D_j$ expresses the fact that the two subdivided stars $T$ and $S'$ (resp. $S$ and $T'$) are in the classes $\mathcal D_i$ and $\mathcal D_j$, and that the subdivided bistar is smaller than $S \tripa S'$, allowing us to invoke the induction hypothesis.

\begin{figure}[H]
\centering
\begin{tikzpicture}
	\node (orig) at (0,0) {\fbox{
	\begin{tikzpicture}
		\draw (0,0) node {$\mathcal D_1 \tripa \mathcal D_3$};
	\end{tikzpicture}}
	};
	
	\node (right1) at (3,-1.5) {
	\begin{tikzpicture}
		\draw (0,0) node {$\mathcal D_1 \tripa \mathcal D_0$};
		\draw (0.75,0) node {;};
		\draw (1.5,0) node {$\mathcal D_1 \tripa \mathcal D_1$};
		\draw (2.25,0) node {;};
		\draw (3,0) node {$\mathcal D_1 \tripa \mathcal D_2$};
	\end{tikzpicture}
	};
	
	\node (left1) at (-3,-1) {
	\begin{tikzpicture}
		\draw (0,0) node {$\mathcal D_0 \tripa \mathcal D_3$};
		\draw (0.75,0) node {;};
		\draw (1.5,0) node {$\dtwobox \tripa \mathcal D_3$};
		\draw (2.25,0) node {;};
		\draw (3,0) node {$\dthreebox \tripa \mathcal D_3$};
	\end{tikzpicture}
	};
	\node (left2) at (-3,-2) {
	\begin{tikzpicture}
		\draw (0,0) node {$\mathcal D_0 \tripa \mathcal D_3$};
		\draw (0.75,0) node {;};
		\draw (1.5,0) node {$\mathcal D_2 \tripa \mathcal D_3$};
		\draw (2.25,0) node {;};
		\draw (3,0) node {$\mathcal D_3 \tripa \mathcal D_3$};
	\end{tikzpicture}
	};
	
	\draw[->] (orig) |- (right1);
	\draw[->] (orig) |- (left1);
	\draw[->] (orig) |- (left2);
\end{tikzpicture}
\caption[Possible options]{The possible options of $S \tripa S'$ when $S_1 \in \mathcal C_1$ and $S_2 \in \mathcal C_3$.}
\label{fig:t1tripat2-c1c3}
\end{figure}

Now, we can compute the Grundy value of $S \tripa S'$. First, we compute this value in the case where the options of $S$ are in $\dtwobox$ and $\dthreebox$:

\begin{center}
	\begin{tabular}{c c l}
	$\grundy(S \tripa S')$ & $=$ & 
	$\mex( \grundy( \mathcal D_0 \tripa \mathcal D_3 ) , \grundy( \dtwobox \tripa \mathcal D_3 ) , \grundy( \dthreebox \tripa \mathcal D_3 ) , \grundy ( \mathcal D_1 \tripa \mathcal D_0 ) , \grundy ( \mathcal D_1 \tripa \mathcal D_1 ) , \grundy ( \mathcal D_1 \tripa \mathcal D_2 ) )$ \\
	& $=$ & $\mex( 3,0,1,1,0,3 )$ (by induction hypothesis) \\
	& $=$ & 2
	\end{tabular}
\end{center}

Now, we compute this value in the case where the options of $S$ are in $\mathcal D_2$ and $\mathcal D_3$:

\begin{center}
	\begin{tabular}{c c l}
		$\grundy(S \tripa S')$ & $=$ & 
		$\mex( \grundy( \mathcal D_0 \tripa \mathcal D_3 ) , \grundy( \mathcal D_2 \tripa \mathcal D_3 ) , \grundy( \mathcal D_3 \tripa \mathcal D_3 ) , \grundy ( \mathcal D_1 \tripa \mathcal D_0 ) , \grundy ( \mathcal D_1 \tripa \mathcal D_1 ) , \grundy ( \mathcal D_1 \tripa \mathcal D_2 ) )$ \\
		& $=$ & $\mex( 3,1,0,1,0,3 )$ (by induction hypothesis) \\
		& $=$ & 2
	\end{tabular}
\end{center}

The Grundy value being the same in both cases, we can conclude that $\grundy(S \tripa S')=2$.

\noindent\textbf{Case 2:} $S \in \mathcal D_0$ and $S' \in \mathcal D_2$

In this case, the possible options of $S$ and $S'$ are shown in Figure~\ref{fig:t1tripat2-c0c2}. On the left are the options of $S$, and on the right are the options of $S'$. Below each possible bistar is the Grundy value of the bistar, thanks to the induction hypothesis. By computing the $\mex$ value of each of the six sets of options, we always find the value 2.
Thus, $\grundy(S \tripa S')=2$.

\begin{figure}[H]
\centering
\begin{tikzpicture}
	\node (orig) at (0,0) {\fbox{
	\begin{tikzpicture}
		\draw (0,0) node {$\mathcal D_0 \tripa \mathcal D_2$};
	\end{tikzpicture}}
	};
	
	\node (right1) at (3,-2) {
	\begin{tikzpicture}
		\draw (0,0) node {$\mathcal D_0 \tripa \mathcal D_0$};
		\draw (0.75,0) node {;};
		\draw (1.5,0) node {$\mathcal D_0 \tripa \mathcal D_1$};
		\draw (0,-0.5) node {0};
		\draw (1.5,-0.5) node {1};
	\end{tikzpicture}
	};
	\node (right2) at (3,-3) {
	\begin{tikzpicture}
		\draw (0,0) node {$\mathcal D_0 \tripa \mathcal D_0$};
		\draw (0.75,0) node {;};
		\draw (1.5,0) node {$\mathcal D_0 \tripa \mathcal D_1$};
		\draw (2.25,0) node {;};
		\draw (3,0) node {$\mathcal D_0 \tripa \mathcal D_3$};
		\draw (0,-0.5) node {0};
		\draw (1.5,-0.5) node {1};
		\draw (3,-0.5) node {3};
	\end{tikzpicture}
	};
	
	\node (left1) at (-3,-1) {
	\begin{tikzpicture}
		\draw (0,0) node {$\donestar \tripa \mathcal D_2$};
		\draw (0.75,0) node {;};
		\draw (1.5,0) node {$\dtwobox \tripa \mathcal D_2$};
		\draw (0,-0.5) node {0};
		\draw (1.5,-0.5) node {1};
	\end{tikzpicture}
	};
	\node (left2) at (-3,-2) {
	\begin{tikzpicture}
		\draw (0,0) node {$\mathcal D_1 \tripa \mathcal D_2$};
		\draw (0.75,0) node {;};
		\draw (1.5,0) node {$\dtwobox \tripa \mathcal D_2$};
		\draw (2.25,0) node {;};
		\draw (3,0) node {$\dthreebox \tripa \mathcal D_2$};
		\draw (0,-0.5) node {3};
		\draw (1.5,-0.5) node {1};
		\draw (3,-0.5) node {0};
	\end{tikzpicture}
	};
	\node (left3) at (-3,-3) {
	\begin{tikzpicture}
		\draw (0,0) node {$\mathcal D_1 \tripa \mathcal D_2$};
		\draw (0.75,0) node {;};
		\draw (1.5,0) node {$\mathcal D_2 \tripa \mathcal D_2$};
		\draw (2.25,0) node {;};
		\draw (3,0) node {$\mathcal D_3 \tripa \mathcal D_2$};
		\draw (0,-0.5) node {3};
		\draw (1.5,-0.5) node {0};
		\draw (3,-0.5) node {1};
	\end{tikzpicture}
	};
	\node (left4) at (-3,-4) {
	\begin{tikzpicture}
		\draw (0,0) node {$\mathcal D_2 \tripa \mathcal D_2$};
		\draw (0.75,0) node {;};
		\draw (1.5,0) node {$\mathcal D_3 \tripa \mathcal D_2$};
		\draw (0,-0.5) node {0};
		\draw (1.5,-0.5) node {1};
	\end{tikzpicture}
	};
	
	\draw[->] (orig) |- (right1);
	\draw[->] (orig) |- (right2);
	\draw[->] (orig) |- (left1);
	\draw[->] (orig) |- (left2);
	\draw[->] (orig) |- (left3);
	\draw[->] (orig) |- (left4);
\end{tikzpicture}
\caption[Possible options]{The possible options of $S \tripa S'$ when $S \in \mathcal D_0$ and $S' \in \mathcal D_2$.}
\label{fig:t1tripat2-c0c2}
\end{figure}

\ \newline

Overall, there are 36 cases to consider. As they are all similar to the two we already considered, we only present the possible classes of the options of $S$ in Figure~\ref{fig:optionsT}. The full proof can be found in~\cite{halFullProof}.

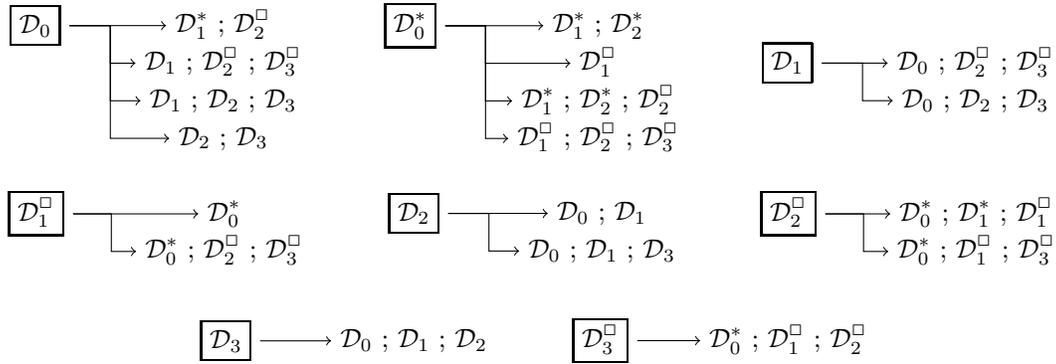
\begin{figure}[H]
\centering
\begin{tikzpicture}
	\node (c0) at (0,0) {
	\begin{tikzpicture}
		\node (0) at (0,0) {\fbox{$\mathcal D_0$}};
		\node (1) at (2.5,0) {$\donestar$ ; $\dtwobox$};
		\node (2) at (2.5,-0.5) {$\mathcal D_1$ ; $\dtwobox$ ; $\dthreebox$};
		\node (3) at (2.5,-1) {$\mathcal D_1$ ; $\mathcal D_2$ ; $\mathcal D_3$};
		\node (4) at (2.5,-1.5) {$\mathcal D_2$ ; $\mathcal D_3$};
		\draw[->] (0) -- (1,0) -- (1);
		\draw[->] (0) -- (1,0) |- (2);
		\draw[->] (0) -- (1,0) |- (3);
		\draw[->] (0) -- (1,0) |- (4);
	\end{tikzpicture}
	};
	\node (c0*) at (5,0) {
	\begin{tikzpicture}
		\node (0) at (0,0) {\fbox{$\dzerostar$}};
		\node (1) at (2.5,0) {$\donestar$ ; $\dtwostar$};
		\node (2) at (2.5,-0.5) {$\donebox$};
		\node (3) at (2.5,-1) {$\donestar$ ; $\dtwostar$ ; $\dtwobox$};
		\node (4) at (2.5,-1.5) {$\donebox$ ; $\dtwobox$ ; $\dthreebox$};
		\draw[->] (0) -- (1,0) -- (1);
		\draw[->] (0) -- (1,0) |- (2);
		\draw[->] (0) -- (1,0) |- (3);
		\draw[->] (0) -- (1,0) |- (4);
	\end{tikzpicture}
	};
	\node (c1) at (10,0) {
	\begin{tikzpicture}
		\node (0) at (0,0) {\fbox{$\mathcal D_1$}};
		\node (1) at (2.5,0) {$\mathcal D_0$ ; $\dtwobox$ ; $\dthreebox$};
		\node (2) at (2.5,-0.5) {$\mathcal D_0$ ; $\mathcal D_2$ ; $\mathcal D_3$};
		\draw[->] (0) -- (1,0) -- (1);
		\draw[->] (0) -- (1,0) |- (2);
	\end{tikzpicture}
	};
	\node (c1b) at (0,-2) {
	\begin{tikzpicture}
		\node (0) at (0,0) {\fbox{$\donebox$}};
		\node (1) at (2.5,0) {$\dzerostar$};
		\node (2) at (2.5,-0.5) {$\dzerostar$ ; $\dtwobox$ ; $\dthreebox$};
		\draw[->] (0) -- (1,0) -- (1);
		\draw[->] (0) -- (1,0) |- (2);
	\end{tikzpicture}
	};
	\node (c2) at (5,-2) {
	\begin{tikzpicture}
		\node (0) at (0,0) {\fbox{$\mathcal D_2$}};
		\node (1) at (2.5,0) {$\mathcal D_0$ ; $\mathcal D_1$};
		\node (2) at (2.5,-0.5) {$\mathcal D_0$ ; $\mathcal D_1$ ; $\mathcal D_3$};
		\draw[->] (0) -- (1,0) -- (1);
		\draw[->] (0) -- (1,0) |- (2);
	\end{tikzpicture}
	};
	\node (c2b) at (10,-2) {
	\begin{tikzpicture}
		\node (0) at (0,0) {\fbox{$\dtwobox$}};
		\node (1) at (2.5,0) {$\dzerostar$ ; $\donestar$ ; $\donebox$};
		\node (2) at (2.5,-0.5) {$\dzerostar$ ; $\donebox$ ; $\dthreebox$};
		\draw[->] (0) -- (1,0) -- (1);
		\draw[->] (0) -- (1,0) |- (2);
	\end{tikzpicture}
	};
	\node (c3) at (2.5,-3.5) {
	\begin{tikzpicture}
		\node (0) at (0,0) {\fbox{$\mathcal D_3$}};
		\node (1) at (2.5,0) {$\mathcal D_0$ ; $\mathcal D_1$ ; $\mathcal D_2$};
		\draw[->] (0) -- (1,0) -- (1);
	\end{tikzpicture}
	};
	\node (c3b) at (7.5,-3.5) {
	\begin{tikzpicture}
		\node (0) at (0,0) {\fbox{$\dthreebox$}};
		\node (1) at (2.5,0) {$\dzerostar$ ; $\donebox$ ; $\dtwobox$};
		\draw[->] (0) -- (1,0) -- (1);
	\end{tikzpicture}
	};
\end{tikzpicture}
\caption{The classes of the possible options of $S$ depending on its class.}
\label{fig:optionsT}
\end{figure}

Going through all the cases allows to prove the correctness of Table~\ref{tab:prod2}.
\end{proof}

This concludes our study of subdivided bistars.

%%%%%%% LA FIN FINALE DU NON-DÉBUT
\section{Conclusion}

In this paper, we introduced a general definition of octal games on graphs, capturing some existing take-away games on graphs. We then focused on one of the simplest octal games, {\bf 0.33}, on some subclasses of trees, namely subdivided stars and bistars.

We proved that for subdivided stars and bistars, as in paths, one can reduce the length of the paths to their length modulo 3. Thanks to this result, we have computed the exact Grundy value of any subdivided star, and exihibited a periodic behaviour. We have extended these results to bistars for which one can also reduce the lengths of any path modulo 3. Using operators and equivalence classes similar to the nim-sum and Grundy classes, we could then compute the Grundy value of a subdivided bistar using values of the two stars composing it.

However, the reduction of paths modulo 3 cannot be generalized to trees:

\begin{obs}
%One cannot add a $P_3$ to any vertex of a bistar without changing the Grundy value (and even without changing the outcome).
Attaching a $P_3$ to a vertex of a bistar which is not one of the central vertices of the stars may change the Grundy value (and even the outcome) of the game. Indeed, the bistar of Figure~\ref{fig:contreexemple} is an $\outcomeN$-position, but attaching a $P_3$ to $u$ changes it into an $\outcomeP$-position.
\end{obs}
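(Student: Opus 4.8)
This statement is a counterexample, so the plan is simply to exhibit the bistar $B$ of Figure~\ref{fig:contreexemple} together with the distinguished vertex $u$, and to verify by direct computation that $\grundy(B)\neq 0$ while $\grundy(B')=0$, where $B'$ denotes $B$ with a $P_3$ attached at $u$. Here $u$ is a vertex of $B$ that is neither a leaf nor one of the two centers, i.e. an internal vertex of an arm or of the middle path, so that attaching a pendant $P_3$ raises its degree to $3$.

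Computing $\grundy(B)$ is routine with the tools of Section~\ref{sec:bistar}: reduce every path of $B$ modulo $3$ by Theorem~\ref{thm:modbipodes}, determine the $\sim_1$- or $\sim_2$-class of each of the two stars composing $B$ from Figure~\ref{fig:tabEquivSim1} or Figure~\ref{fig:tabEquivSim2}, and read the value off Table~\ref{tab:prod1} or Table~\ref{tab:prod2}; one checks that this value is nonzero, so that $B$ is an $\outcomeN$-position. Equivalently, one may expand the game tree of $B$ by hand: its options are disjoint unions of paths and small subdivided stars, whose Grundy values are given by Proposition~\ref{prop:033pathsAndCycles} and Theorem~\ref{thm:grunStars}, and the Sprague--Grundy theorem then yields $\grundy(B)$.

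The substantive part is to show that $B'$ is a $\outcomeP$-position. Now $B'$ is a tree with three vertices of degree at least $3$, hence lies outside every family classified in this paper, so there is no formula to invoke; instead I would argue directly from Proposition~\ref{prop:outcome}, checking that every option of $B'$ has nonzero Grundy value. The moves to enumerate are: removing one of the leaves; removing a single vertex, or two adjacent vertices, from the interior of an arm, of the middle path, or of the attached $P_3$, whenever the graph stays connected; and the disconnecting moves. The key point --- and the whole reason the $P_3$-reduction fails here --- is that $u$ now has degree $3$, so $u$ itself, and certain pairs of vertices incident with $u$, are no longer removable, which changes the set of options with respect to the bistar without the pendant $P_3$. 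Each option of $B'$ is a disjoint union of paths and subdivided stars and/or bistars, so its Grundy value is the nim-sum of quantities supplied by Proposition~\ref{prop:033pathsAndCycles} and Theorems~\ref{thm:grunStars},~\ref{thm:equiv1} and~\ref{thm:equiv2}; one verifies that none of these nim-sums equals $0$, and concludes $\grundy(B')=0$.

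I expect the only difficulty to be bookkeeping: ensuring the list of legal moves on $B'$ is exhaustive --- in particular not omitting the disconnecting moves, nor, conversely, wrongly allowing the moves around $u$ that the pendant $P_3$ forbids --- and decomposing each option into its connected components before applying the Sprague--Grundy theorem. No new idea is needed beyond the equivalence tables already established; indeed the content of the observation is precisely that those tables stop at bistars, and the single inequality $\grundy(B)\neq 0 = \grundy(B')$ witnesses this.
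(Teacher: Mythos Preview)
Your overall plan---compute $\grundy(B)$ from the bistar tables, then show every option of $B'$ is an $\outcomeN$-position---is exactly what the paper does (the paper is even terser: it names one winning move in $B$ and declares the $B'$ case analysis ``straightforward''). Two concrete points in your write-up need fixing, however.

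First, the game {\bf 0.33} admits \emph{no} disconnecting moves: the code digit $3=1+2\cdot 1+4\cdot 0$ has $b_3=0$, so in a tree the only legal moves are removing a leaf or removing a leaf together with its neighbour of degree~$2$. Hence every option of $B'$ is a single connected tree, never a disjoint union, and you should delete the phrase ``and the disconnecting moves'' as well as the appeal to the Sprague--Grundy theorem for recombining components.

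Second, it is not true that every option of $B'$ is a subdivided star or bistar. If the first player removes one vertex (resp.\ two vertices) from the far end of the attached $P_3$, the vertex $u$ keeps degree~$3$, so the resulting tree still has three branch vertices and lies outside the scope of Theorems~\ref{thm:equiv1} and~\ref{thm:equiv2}. For these two options you must recurse one more step: exhibit in each of them a move to a genuine subdivided bistar that the tables certify as a $\outcomeP$-position (for instance, removing the lone leaf $(3,-1)$ works). Only the options that delete a leaf adjacent to one of the three branch vertices collapse immediately to a bistar.
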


\begin{figure}[H]
\centering
\begin{tikzpicture}[scale=0.75]
	\node[noeud] at (-1,-1) {};
	\node[noeud] at (-1,1) {};
	\node[noeud] at (0,0) {};
	\node[noeud] at (1,0) {};
	\node[noeud] at (2,0) {};
	\node[noeud] at (3,-1) {};
	\node[noeud] at (3,1) {};
	\draw (3,1.25) node {$u$};
	\node[noeud] at (4,1) {};

	\draw (0,0) to (-1,-1);
	\draw (0,0) to (-1,1);
	\draw (0,0) -- (2,0);
	\draw (2,0) to (3,-1);
	\draw (2,0) to (3,1);
	\draw (4,1) -- (3,1);
\end{tikzpicture}
\caption{Counter-example for trees: attaching a $P_3$ to $u$ changes the outcome.}
\label{fig:contreexemple}
\end{figure}
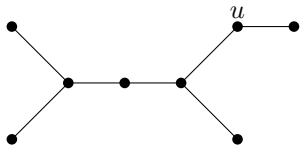

\begin{proof}
	The bistar is an $\outcomeN$-position: removing $u$ and the leaf attached to it leaves \sstar{1,1,3} which is equivalent to \sstar{1,1} by Theorem~\ref{thm:modkpodes}, which is a $\outcomeP$-position.
	
	Attaching a $P_3$ to $u$ changes the outcome: by a straightforward case analysis, one can check that every move leaves a $\outcomeN$-position.
\end{proof}

Actually, we conjecture that the Grundy value of trees for the {\bf 0.33} game is not even bounded.

\begin{conj}
	For all $n \geq 4$, there exists a tree $T$ such that $\grundy_{{\bf 0.33}}(T)=n$.
\end{conj}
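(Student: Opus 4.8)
This conjecture is currently open; I describe a plan of attack. The first step is to reduce it to a cleaner statement: \emph{the Grundy values of trees for \textbf{0.33} are unbounded}. The two statements are equivalent. Indeed, \textbf{0.33} has no disconnecting move, so every position reached from a tree stays connected and is hence again a tree; moreover, if $T$ is a tree with $\grundy(T)=g$, then by the $\mex$ formula the set of Grundy values of the options of $T$ contains $\{0,1,\dots,g-1\}$, so for every $n<g$ some option of $T$ — which is again a tree — has Grundy value exactly $n$. Thus, once we know that trees of arbitrarily large Grundy value exist, every value $n\ge 4$ (in fact every $n\ge 0$) is realised by some tree. So it suffices to construct, for each $N$, a tree of Grundy value at least $N$.

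To attack that, I would first record the very restrictive shape of legal \textbf{0.33}-moves on a tree with at least two vertices: removing a single vertex keeps the graph connected only if that vertex is a leaf, and removing two adjacent vertices $u,v$ keeps it connected only if $\{u,v\}$ is a whole $P_2$-component or $\deg u+\deg v=3$, i.e. one of $u,v$ is a leaf and the other has degree exactly $2$ (a one-line degree count: $T-\{u,v\}$ has $\deg u+\deg v-2$ components). Hence on a tree the only moves are ``trim a leaf'' and ``trim a leaf together with a degree-$2$ neighbour''. This makes the game amenable to an inductive $\mex$ computation along a well-chosen family, and I would propose a recursively defined family $\{T_k\}$ — for instance caterpillars carrying a pendant $P_2$ on every spine vertex, or, in the spirit of Section~\ref{sec:bistar}, ``$k$-ary stars'' obtained by joining $k$ subdivided-star gadgets along a path — and prove by induction on $k$ that $\grundy(T_k)$ strictly increases with $k$ (equivalently, that the options of $T_k$ realise all values $0,\dots,k-1$, forcing $\grundy(T_k)\ge k$). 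The exact values already known for paths (Proposition~\ref{prop:033pathsAndCycles}), subdivided stars (Theorem~\ref{thm:grunStars}) and bistars (Theorems~\ref{thm:equiv1} and~\ref{thm:equiv2}) would form the base layer, since after a few trimming moves the small gadgets collapse to stars and bistars.

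The main obstacle is that, unlike subdivided stars and bistars, a general tree admits no useful decomposition: a vertex of degree $\ge 2$ can never be removed while any of its incident branches survive, so the Sprague--Grundy Theorem~\ref{thm:grundysum} is of no help, and — as the Observation preceding this conjecture shows — one cannot even reduce a pendant path modulo $3$. Consequently the Grundy value of the chosen family must be computed essentially by hand, with no shortcut through a sum of simpler games, and the crux is to \emph{design} $\{T_k\}$ regular enough that the induced $\mex$ recursion closes up — the analogue, for trees with $k$ branch vertices, of the equivalence-class tables of Section~\ref{sec:bistar}. I would expect this to require first extending the $\sim_1$/$\sim_2$-type analysis from bistars to ``tristars'' and beyond, identifying the finite data about a rooted gadget that determines its effect when attached, and then isolating within that calculus a local ``pumping'' operation which provably raises the Grundy value by exactly one; iterating it would yield the family. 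As with the bistar table, discovering the correct family and its closed form is likely to need computer experiments before the inductive proof can be written, and pinning down that inductive step is where the real difficulty lies.
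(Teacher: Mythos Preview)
This statement is a \emph{conjecture} in the paper, not a theorem: the paper offers no proof, only computational evidence (examples of caterpillars with Grundy value up to $11$, cf.\ Figure~\ref{fig:large_grundy}) and in fact goes on to pose the stronger conjecture that the same holds within the class of caterpillars. So there is nothing to compare your proposal against --- you have correctly identified the status of the statement.

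That said, the first two steps of your plan are clean and worth keeping. The reduction ``unbounded $\Rightarrow$ every value $n$ is realised'' is valid exactly as you argue: options of trees under \textbf{0.33} are again trees, and $\grundy(T)=g$ forces the option set to hit every value below $g$. Your characterisation of legal moves on a tree with at least two vertices (trim a leaf, or trim a leaf together with its unique degree-$2$ neighbour) is also correct; the degree count $\deg u+\deg v-2$ for the number of components of $T\setminus\{u,v\}$ is the right one-line justification.

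Where the proposal becomes speculative is precisely where the problem is hard. Extending the $\sim_1/\sim_2$ machinery to ``tristars'' and beyond is a natural idea, but the Observation immediately preceding this conjecture (attaching a $P_3$ to a non-central vertex can flip the outcome) already shows that the modulo-$3$ reduction breaks once there are three branch vertices, so any finite ``equivalence-class table'' for rooted gadgets would have to encode strictly more information than Grundy value and path lengths mod $3$. Finding what that extra information is --- or exhibiting a concrete family $\{T_k\}$ with a closed-form $\mex$ recursion --- is the open problem itself; your outline is a reasonable research programme rather than a proof.
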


This conjecture might even be true in the class of caterpillars. A
feeble argument to illustrate this intuition comes from our
computations. We may provide examples of caterpillars with Grundy
values as large as 11. Figure~\ref{fig:large_grundy} depicts a
caterpillar with a Grundy value of 10 (checked by computer).

\begin{figure}[ht]
	\begin{center}
		\begin{tikzpicture}[scale=.4]
		\node[noeud] at (0,0) {};
		\foreach \i in {1,...,36}{
			\draw (\i-1,0) -- (\i,0);
			\node[noeud] at (\i,0) {};
		}
		\foreach \i in { 2,4,6,8,10,12,14,18,20,22,24,26,28,30,34}{
			\node[noeud] at (\i,-1) {};
			\draw (\i,0) -- (\i,-1);
		}
		\end{tikzpicture}
	\end{center}
	\caption{A caterpillar with a Grundy value of 10.}
	\label{fig:large_grundy}
\end{figure}
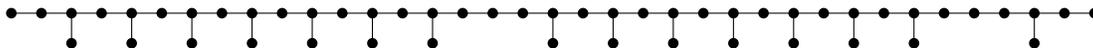

\begin{conj}
	For all $n \geq 4$, there exists a caterpillar $C$ such that $\grundy_{\textbf{0.33}}(C)=n$.
\end{conj}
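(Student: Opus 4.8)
This conjecture is open; what follows is the line of attack we consider most promising, and the obstacle that makes it hard. The first useful fact is that, unlike for subdivided stars and bistars, the class of caterpillars is \emph{closed} under the moves of the {\bf 0.33} game: the only legal moves on a caterpillar with at least three vertices are ``remove a leg (leaf)'' and ``remove a leaf together with its neighbour, that neighbour having degree $2$'', and each of these yields another caterpillar (possibly a path or a single vertex). Hence the set $\mathcal V=\{\grundy(C)\ :\ C\text{ a caterpillar}\}$ is downward closed: if $\grundy(C)=m\ge 1$ then, by the $\mex$ rule, $C$ has options of Grundy value $0,1,\dots,m-1$, and those options are again caterpillars. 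Therefore $\mathcal V$ is an initial segment $\{0,1,\dots,N\}$ with $N\in\mathbb N\cup\{\infty\}$, and since $N\ge 10$ by the computer-checked example of Figure~\ref{fig:large_grundy}, the conjecture is \emph{equivalent} to $N=\infty$. In particular it suffices to exhibit, for every $n$, a single caterpillar with Grundy value at least $n$ (equivalently, at least $0,1,\dots,n-1$ among its option values).

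\noindent\textbf{A constructive programme.} To produce such caterpillars one would first transport the path-reduction machinery of Section~\ref{sec:star} from spiders to caterpillars: prove, by the same ``$S+S'$ is a $\outcomeP$-position via a pairing strategy'' argument as in Lemma~\ref{lem:modkpodes}, that attaching a $P_3$ to a \emph{spine endpoint} of a caterpillar leaves its Grundy value unchanged. (The counter-example of Figure~\ref{fig:contreexemple} shows this \emph{fails} at interior vertices, so the attachment point must be tracked carefully.) Such an endpoint ``$\bmod 3$'' reduction lets one insert neutral spine length at will and so restricts attention to caterpillars with a bounded amount of end-padding. Guided by the data behind Figure~\ref{fig:large_grundy}, one would then isolate a fixed caterpillar gadget $G$ together with an attachment of $G$ at a spine endpoint such that, for every caterpillar $C$ in a suitably ``generic'' family (long spine, prescribed end-pattern), attaching $G$ \emph{increments} the Grundy value. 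Iterating $G$ along a long spine would then realize arbitrarily large values, with the small base cases $n=4,5,\dots$ taken from the computer search. Each increment step would be an induction on caterpillar size: expand the Grundy value of the enlarged caterpillar as the $\mex$ over its (finitely many shapes of) options and apply the inductive hypothesis.

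\noindent\textbf{The main obstacle.} The difficulty is intrinsic: the {\bf 0.33} game on graphs is a \emph{connected} deletion game, for which there is no Sprague--Grundy-type composition theorem — the Grundy value of a caterpillar obtained by joining $C$ and $G$ along an edge is \emph{not} a function of $\grundy(C)$ and $\grundy(G)$, exactly as in Section~\ref{sec:bistar}, where the refinements $\mathcal C^{\ast}_i,\mathcal C^{\Box}_i$ (resp.\ $\mathcal D^{\ast}_i,\mathcal D^{\Box}_i$) were needed precisely to record the interaction across the joining path. To run the programme above one must pin down a \emph{finite} invariant of a ``caterpillar with a distinguished spine endpoint'' that refines the Grundy value and determines the Grundy value after attaching $G$. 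Here lies the tension: the very statement of the conjecture says the Grundy value is unbounded, so this invariant cannot be ``essentially the Grundy value''; proving that such a refined interface invariant is nonetheless finite, and that attaching $G$ acts on it so as to raise the Grundy value reliably rather than let it collapse, is the crux of any proof. We expect a successful argument to combine such a bounded local ``transfer'' automaton running along the spine with an explicit family, rather than a purely non-constructive unboundedness proof, of which we are aware of none.
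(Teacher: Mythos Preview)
The paper does not prove this statement: it is presented as a conjecture, supported only by computational evidence (caterpillars with Grundy value up to $11$, and the explicit value-$10$ example of Figure~\ref{fig:large_grundy}). Your proposal correctly flags the status as open, so there is no ``paper's proof'' to compare against.

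That said, your downward-closure reduction is a genuine contribution that the paper does not make. The claim that caterpillars are closed under {\bf 0.33} moves is correct: on any tree with at least three vertices, a legal {\bf 0.33} move removes either a leaf or a leaf together with its (necessarily degree-$2$) neighbour, and both operations preserve the caterpillar property. The $\mex$ rule then forces $\{\grundy(C):C\text{ a caterpillar}\}$ to be an initial segment of $\mathbb N$, so the conjecture is indeed equivalent to this set being infinite. This is a clean and useful reformulation.

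The remainder of your proposal is an honest research programme rather than a proof, and you present it as such. One caution: your proposed first step, that attaching a $P_3$ at a \emph{spine endpoint} is Grundy-neutral, is itself a nontrivial conjecture. Note that in the counterexample of Figure~\ref{fig:contreexemple}, the vertex $u$ is a leaf lying at the end of a longest path, so after attachment it becomes part of the spine; depending on how one formalises ``spine endpoint'' (the derived-path endpoints versus the endpoints of a longest path), this example may already be close to the situation you want to allow. You should check carefully which notion you mean and verify the base cases before building on it.
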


However, some of our results can be generalized to other octal games on subdivided stars, see \cite{futurpapier}.

Finally, we would like to mention that it would certainly be interesting to consider the misère version of the 0.33 game on graphs.

\begin{center}
	\section*{References}
\end{center}

\end{document}